\newcommand{\numberseries}{\bfseries}   
\newlength{\thmtopspace}                
\newlength{\thmbotspace}                
\newlength{\thmheadspace}               
\newlength{\thmindent}                  
\newtheoremstyle{bfupright head,slanted body}
                {\thmtopspace}{\thmbotspace}
                {\slshape}{\thmindent}{\bfseries}{.}{\thmheadspace}
                {{\numberseries \thmnumber{#2\;}}\thmnote{#3}}
\newtheoremstyle{fixed bf head,slanted body}
                {\thmtopspace}{\thmbotspace}{\slshape}
                {\thmindent}{\bfseries}{.}{\thmheadspace}
                {{\numberseries \thmnumber{#2\;}}\thmname{#1}\thmnote{ (#3)}}
\newtheoremstyle{fixed bf head,upright body}
                {\thmtopspace}{\thmbotspace}{\upshape}
                {\thmindent}{\bfseries}{.}{\thmheadspace}
                {{\numberseries \thmnumber{#2\;}}\thmname{#1}\thmnote{ (#3)}}
\newtheoremstyle{numbered paragraph}
                {\thmtopspace}{\thmbotspace}{\upshape}
                {\thmindent}{\upshape}{}{\thmheadspace}
                {{\numberseries \thmnumber{#2.}}}
\theoremstyle{bfupright head,slanted body}
\newtheorem{res}{}[section]
\newtheorem*{res*}{}
\theoremstyle{fixed bf head,slanted body}
\newtheorem{thm}[res]{Theorem}          \newtheorem*{thm*}{Theorem}
\newtheorem{prp}[res]{Proposition}      \newtheorem*{prp*}{Proposition}
\newtheorem{cor}[res]{Corollary}        \newtheorem*{cor*}{Corollary}
\newtheorem{lem}[res]{Lemma}            \newtheorem*{lem*}{Lemma}
\newtheorem{exm}[res]{Example}
\theoremstyle{fixed bf head,upright body}
\newtheorem{dfn}[res]{Definition}       \newtheorem*{dfn*}{Definition}
     \newtheorem*{con*}{Construction}
\newtheorem{rmk}[res]{Remark}           \newtheorem*{rmk*}{Remark}
             \newtheorem*{fct*}{Fact}
\theoremstyle{numbered paragraph}
\newlength{\thmlistleft}        
\newlength{\thmlistright}       
\newlength{\thmlistpartopsep}   
\newlength{\thmlisttopsep}      
\newlength{\thmlistparsep}      
\newlength{\thmlistitemsep}     
\newcounter{eqc}
  {\end{list}}%
\newcounter{prt}
  {\end{list}}%
\newcounter{rqm}
\newenvironment{rqm}{\begin{list}{\upshape (\arabic{rqm})}%
    {\usecounter{rqm}%
      \setlength{\leftmargin}{\thmlistleft}%
      \setlength{\labelwidth}{\thmlistleft}%
      \setlength{\rightmargin}{\thmlistright}%
      \setlength{\partopsep}{\thmlistpartopsep}%
      \setlength{\topsep}{\thmlisttopsep}%
      \setlength{\parsep}{\thmlistparsep}%
      \setlength{\itemsep}{\thmlistitemsep}}}%
  {\end{list}}%
\newenvironment{prf*}[1][Proof]{%
  \begin{proof}[\bf #1]
    \setcounter{equation}{0}
    }
  {\end{proof}
}
\newcommand{\pgref}[1]{\ref{#1}}
\newcommand{\prpref}[2][Proposition~]{#1\pgref{prp:#2}}
\newcommand{\lemref}[2][Lemma~]{#1\pgref{lem:#2}}
\newcommand{\dfnref}[2][Definition~]{#1\pgref{dfn:#2}}
\newcommand{\rmkref}[2][Remark~]{#1\pgref{rmk:#2}}
\renewcommand{\eqref}[1]{(\pgref{eq:#1})}
\def\urltilda{\kern -.15em\lower .7ex\hbox{\~{}}\kern .04em}
\newcommand{\deq}{\:=\:}
\newcommand{\lra}{\longrightarrow}
\newcommand{\xra}[2][]{\xrightarrow[#1]{\;#2\;}}
\newcommand{\dif}[2][]{{\partial}^{#2}_{#1}}
\newcommand{\Cy}[2][]{\operatorname{Z}_{#1}(#2)}
\newcommand{\Co}[2][]{\operatorname{C}_{#1}(#2)}
\renewcommand{\H}[2][]{\operatorname{H}_{#1}(#2)}
\newcommand{\Hom}[3]{\operatorname{Hom}_{#1}(#2,#3)}
\newcommand{\ra}{\rightarrow}
\def\Q{\mathcal{Q}}
\def\I{\mathcal{I}}
\def\modcat{\mathrm{mod}}
\def\dashed{\text{-}}
\def\@nobreak@#1{\mathchoice%
  {\nobreakdef@\displaystyle\f@size{#1}}%
  {\nobreakdef@\nobreakstyle\tf@size{\firstchoice@false #1}}%
  {\nobreakdef@\nobreakstyle\sf@size{\firstchoice@false #1}}%
  {\nobreakdef@\nobreakstyle\ssf@size{\firstchoice@false #1}}%
  \check@mathfonts}%
\def\nobreakdef@#1#2#3{\hbox{{%
                    \everymath{#1}%
                    \let\f@size#2\selectfont%
                    #3}}}%
\numberwithin{equation}{res}
\begin{document}

\title
{Cotorsion pairs in comma categories}

\author[Yuan Yuan]{Yuan Yuan}

\address{Department of Applied Mathematics, Lanzhou University of
  Technology, Lanzhou 730050, China}

\email{yynldxz@163.com}

\author[Jian He]{Jian He$^{\ast}$}

\address{Department of Applied Mathematics, Lanzhou University of
  Technology, Lanzhou 730050, China}

\email{jianhe30@163.com}

\author[Dejun Wu]{Dejun Wu}

\address{Department of Applied Mathematics, Lanzhou University of
  Technology, Lanzhou 730050, China}

\email{wudj@lut.edu.cn}

\thanks{$^{\ast}$Corresponding author. Jian He is supported by the National Natural Science Foundation of China (Grant No. 12171230) and Youth Science and Technology Foundation of Gansu Provincial (Grant No. 23JRRA825). ~D. Wu is supported by the National Natural Science Foundation of China (Grant No. 12261056).}

\date{\today\\\indent 2020\emph{\ Mathematics Subject Classification.} 18A25, 16E30, 18G25}

\keywords{Comma category, cocompatible functor, cotorsion pair}

\begin{abstract}
  Let $\mathcal{A}$ and $\mathcal{B}$ be abelian categories with enough projective and injective objects, and $T:\mathcal{A}\rightarrow\mathcal{B}$ a left exact additive functor. Then one has a comma category $(\mathcal{B}\hspace{-0.1cm}\downarrow\hspace{-0.1cm}T)$. It is shown that If $T:\mathcal{A}\rightarrow\mathcal{B}$ is $\mathcal{X}$-exact, then $(^{\bot}\mathcal{X},~\mathcal{X})$ is a (hereditary) cotorsion pair in $\mathcal{A}$ and $(^{\bot}\mathcal{Y},~\mathcal{Y})$) is a (hereditary) cotorsion pair in $\mathcal{B}$ if and only if $((\begin{smallmatrix}^{\bot}\mathcal{X}\\^{\bot}\mathcal{Y}\end{smallmatrix}),\langle{\bf h}(\mathcal{X},~\mathcal{Y})\rangle)$ is a (hereditary) cotorsion pair in $(\mathcal{B}\downarrow T)$ and $\mathcal{X}$ and $\mathcal{Y}$ are closed under extensions. Furthermore, we characterize
  when special preenveloping classes in abelian categories $\mathcal{A}$ and $\mathcal{B}$ can induce special preenveloping classes in $(\mathcal{B}\downarrow T)$.
\end{abstract}

\maketitle

\thispagestyle{empty}


\section{Introduction}

\noindent Let $\mathcal{A}$ and $\mathcal{B}$ be abelian categories, and $T:\mathcal{A}\rightarrow\mathcal{B}$ be a left exact additive functor and $G:\mathcal{B}\rightarrow\mathcal{A}$ a right exact additive functor. It follows from Fossum, Griffith and Reiten \cite{MR389981} and Marmaridis \cite{MR709023} that there are two abelian comma categories $(\mathcal{B}\hspace{-0.05cm}\downarrow\hspace{-0.05cm}T)$ and $(G\hspace{-0.05cm}\downarrow\hspace{-0.05cm}\mathcal{A})$. Notice that module categories of upper triangular matrix rings are comma categories. For each recollement of triangulated categories, it follows from Chen and Le \cite{MR4430943} that there is an epivalence (provided that it is full and essentially surjective, and detects isomorphisms between objects; see Gabriel and Ro\u{\i}ter \cite{MR1239447}) between the middle category and the comma category associated with a triangle functor from the category on the right to the category on the left. Hu and Zhu \cite{MR4412784} characterize when special precovering classes in abelian categories $\mathcal{A}$ and $\mathcal{B}$ can induce special precovering classes in comma category $(G\hspace{-0.05cm}\downarrow\hspace{-0.05cm}\mathcal{A})$.

In this paper, we study the property of the extension closure of some classes of objects in $(\mathcal{B}\hspace{-0.1cm}\downarrow\hspace{-0.1cm}T)$, the exactness of the functor $\textbf{h}$ and the detailed description of orthogonal classes of a given class ${\bf h}(\mathcal{X},\mathcal{Y})$ in $(\mathcal{B}\hspace{-0.1cm}\downarrow\hspace{-0.1cm}T)$. Moreover, we characterize when complete hereditary cotorsion pairs in abelian categories $\mathcal{A}$ and $\mathcal{B}$ can induce complete hereditary cotorsion pairs in $(\mathcal{B}\hspace{-0.05cm}\downarrow\hspace{-0.05cm}T)$. Furthermore, we characterize when special preenveloping classes in abelian categories $\mathcal{A}$ and $\mathcal{B}$ can induce special preenveloping classes in $(\mathcal{B}\hspace{-0.05cm}\downarrow\hspace{-0.05cm}T)$.


\section{Notation and Terminology}

\noindent Throughout this paper, $\mathcal{A}$ and $\mathcal{B}$ are abelian categories. Let $\mathcal{X}$ be a subclass of $\mathcal{A}$. Set
$$^{\perp}\mathcal{X}=\{M~\in~\mathcal{A}~|~\textrm{Ext}^{1}_{\mathcal{A}}(M, X)=0~\text{for~every}~X~\in~\mathcal{X}\},$$
$$\mathcal{X}^{\perp}=\{ M~\in~\mathcal{A}~|~\textrm{Ext}^{1}_{\mathcal{A}}(X, M)=0~\text{for~every}~X~\in~\mathcal{X}\}.$$
Let $\emph{f}:X\rightarrow M$ be a morphism. It is called $\emph{special}~\mathcal{X}$-$\emph{precover}$ of $M$ if $f$ is surjective, $X\in\mathcal{X}$ and ker$\emph{f}\in \mathcal{X}^{\perp}$. Dually, a morphism $\emph{g}:M\rightarrow Y$ is called $\emph{special}~\mathcal{Y}$-$\emph{preenvelop}$ of $M$ if $\emph{g}$ is injective, $Y\in\mathcal{Y}$ and coker$\emph{g}\in\,^{\perp}\mathcal{Y}$. The class $\mathcal{X}$ is called $\emph{special-precovering}$ (resp. $\emph{special-preevenloping}$) if every object in $\mathcal{A}$ has $\emph{special}~\mathcal{X}$-$\emph{precover}$ (resp. $\emph{special}~\mathcal{X}$-$\emph{preenvelop}$).

Let $\mathcal{X}$ and $\mathcal{Y}$ be a subclass of $\mathcal{A}$. Recall from \cite{MR565595} that a pair $(\mathcal{X},~\mathcal{Y})$ is called a cotorsion pair if $\mathcal{X}=\,^{\bot}\mathcal{Y},~\mathcal{Y}=\mathcal{X}^{\bot}$.
A cotorsion pair $(\mathcal{X},~\mathcal{Y})$ is said to be \emph{hereditary} \cite{MR2355778} if $\textrm{Ext}^{i}_{\mathcal{A}}(X, M)=0$ for all $i~\geq~1$, $X\in\mathcal{X}$ and $Y\in\mathcal{Y}$, equivalently, if whenever $0\rightarrow X_{1}\rightarrow X_{2} \rightarrow X_{3} \rightarrow 0$ is exact  with $X_{2},~X_{3}\in \mathcal{X}$, then $X_{1}$ is also in $\mathcal{X}$, or equivalently, if whenever $0\rightarrow Y_{1}\rightarrow Y_{2}\rightarrow Y_{3}\rightarrow 0$ is exact with $Y_{1},~Y_{2}\in \mathcal{Y}$, then $Y_{3}$ is also in $\mathcal{Y}$.
A cotorsion pair $(\mathcal{X},~\mathcal{Y})$ is said to be \emph{complete} \cite{MR2985654} if $\mathcal{X}$ is special precovering and $\mathcal{Y}$ is special preenveloping. Moreover, if $\mathcal{A}$ has enough projective objects, the condition that $(\mathcal{X},~\mathcal{Y})$ is complete is equivalent to that $\mathcal{Y}$ is special preenveloping. Similarly, if $\mathcal{A}$ has enough injective objects, the condition that $(\mathcal{X},~\mathcal{Y})$ is complete is equivalent to that $\mathcal{X}$ is special precovering.
Let $\mathcal{X}$ be a subclass of $\mathcal{A}$. The functor $T:\mathcal{A}\rightarrow\mathcal{B}$ is called $\mathcal{X}$-$\emph{exact}$ if $T$ preserves  exactness of exact sequence $0\rightarrow X\rightarrow A_{1}\rightarrow A_{2}\rightarrow0$ in $\mathcal{A}$ with $X\in\mathcal{X}$.

Let $\mathcal{A}$ be an abelian category and $X$ a complex in $\mathcal{A}$. With homological grading, $X$ has the form
$$\cdots \lra X_{i+1} \xra{\dif[i+1]{X}} X_i \xra{\dif[i]{X}} X_{i-1} \lra \cdots.$$
We use the notations $\Cy[i]{X}$ for the kernel of differential $\dif[i]{X}$ and $\Co[i]{X}$ for the cokernel of the
differential $\dif[i+1]{X}$. A complex $X$ is called \emph{acyclic} if the homology complex $\H{X}$ is the
zero complex. A complex $\textbf{I}$ of injective objects is called $\emph{totally acyclic}$ if it is acyclic and $\textrm{Hom}_{\mathcal{A}}(E,~\textbf{I})$ is acyclic for any injective object $E$ in $\mathcal{A}$. An object $G$ in $\mathcal{A}$ is called $\emph{Gorenstein injective}$ if one has $G\cong\textrm{Z}_{0}(\textbf{I})$ for some totally acyclic complex $\textbf{I}$ of injective objects in $\mathcal{A}$. In what follows, we denote by $\mathcal{GI}_{\mathcal{A}}$ the subcategory of $\mathcal{A}$ consisting of Gorenstein injective objects.

Let $\mathcal{X}$ be a class of objects in an abelian category $\mathcal{A}$. $\mathcal{X}$ is said to be \emph{closed under extensions} if whenever $0\rightarrow X_{1}\rightarrow X_{2}\rightarrow X_{3}\rightarrow 0$ is an exact sequence in $\mathcal{A}$ with $X_{1},~X_{3}\in\mathcal{X}$, then $X_{2}\in\mathcal{X}$.

Recall the following definition of comma categories from \cite{MR389981,MR709023}.

\begin{dfn}
Let $\mathcal{A}$ and $\mathcal{B}$ be abelian categories, and $T:\mathcal{A}\rightarrow\mathcal{B}$ a left exact additive functor. The comma category, denoted by $(\mathcal{B}\hspace{-0.1cm}\downarrow\hspace{-0.1cm}T)$, is defined as follows. The objects of the category are triplets $\left(\begin{smallmatrix}A\\B\end{smallmatrix}\right)_{\varphi}$
with $A\in\mathcal{A}$, $B\in\mathcal{B}$ and $\varphi\in\textrm{Hom}_{\mathcal{B}}(B,~TA)$; and the morphisms of the category are $\left(\begin{smallmatrix}\alpha\\ \beta\end{smallmatrix}\right)_{\varphi}$
in $\mathcal{A}\times\mathcal{B}$ such that the following diagram
\[
\begin{CD}
B@>{\rm}^{\beta}>{\rm}>B^{'} \\
^{\varphi}@VVV ^{\varphi^{'}}@VVV \\
TA @>{\rm}^{T\alpha}>{\rm}>TA^{'}
\end{CD}
\]
is commutative.
\end{dfn}

\section{Homological behavior of the functor h}

\noindent Firstly, recall the following definition from \cite{MR389981,MR709023}.

\begin{dfn}
Let $T:\mathcal{A}\rightarrow\mathcal{B}$ be a left exact additive functor. Then we have the following functor:
 $\textbf{h}:\mathcal{A}\times\mathcal{B}\rightarrow(\mathcal{B}\downarrow T)$ via $\textbf{h}(A,~B)=\left(\begin{smallmatrix}A\\B\oplus TA\end{smallmatrix}\right)$
and $\textbf{h}(\alpha,~\beta)=\left(\begin{smallmatrix}\alpha \\ \beta\oplus T\alpha \end{smallmatrix}\right)$
, where $(A, B)$ is an object in $\mathcal{A}\times\mathcal{B}$ and $(\alpha, \beta)$ is a morphism in $\mathcal{A}\times\mathcal{B}$.
\end{dfn}

\begin{rmk}\label{rmk:functorH}
\begin{rqm}
  \item $\textbf{h}(A,~B)=\textbf{h}(A,~0)\oplus\textbf{h}(0,~B)$ with $A\in\mathcal{A}$, $B\in\mathcal{B}$. Moreover, $\textbf{h}$ perserves injective objects if $\mathcal{A}$ and $\mathcal{B}$ have enough injective objects.
  \item Define $\textbf{q}:(\mathcal{B}\downarrow T)\rightarrow\mathcal{A}\times\mathcal{B}$ via $\textbf{q} \left(\begin{smallmatrix}A\\B\end{smallmatrix}\right)=(A,~B)$
and $\bf{q}
 \left(\begin{smallmatrix}\alpha\\ \beta\end{smallmatrix}\right)=(\alpha,~\beta)$
, where $\left(\begin{smallmatrix}A\\B\end{smallmatrix}\right)$ is an object in $(\mathcal{B}\downarrow T)$ and $\left(\begin{smallmatrix}\alpha\\ \beta\end{smallmatrix}\right)$ is a morphism in $(\mathcal{B}\downarrow T)$, then $\textbf{q}$ is a left adjoint of $\bf{h}$. Hence $\textbf{h}$ is a left exact functor.
\end{rqm}
\end{rmk}

For convenience, we set
$$\mathfrak{D}^{\mathcal{X}}_{\mathcal{Y}}=\left\{\left(\hspace{-0.15cm}
         \begin{array}{c}
           X \\
           Y \\
         \end{array}\hspace{-0.15cm}
       \right)_{\varphi}\in(\mathcal{B}\downarrow T)\:\left|\:X\in\mathcal{X},~\varphi~\rm{is}~\rm{epimorphism}~
\rm{and}~\rm{ker}\varphi \in\mathcal{Y}\right.\right\}$$

\begin{equation*}
    \langle{\bf h}(\mathcal {X},\mathcal {Y})\rangle \deq %
    \left\{\left(\hspace{-0.15cm}
         \begin{array}{c}
           X \\
           Y \\
         \end{array}\hspace{-0.15cm}
       \right)
      \:\left|\:
        \begin{gathered}
          \text{$0\to \left(\hspace{-0.15cm}
         \begin{array}{c}
           X' \\
           Y' \\
         \end{array}\hspace{-0.15cm}
       \right)\to \left(\hspace{-0.15cm}
         \begin{array}{c}
           X \\
           Y \\
         \end{array}\hspace{-0.15cm}
       \right)\to \left(\hspace{-0.15cm}
         \begin{array}{c}
           X'' \\
           Y''\\
         \end{array}\hspace{-0.15cm}
       \right)\to 0$ is exact}\\[-3pt]
          \text{with $\left(\hspace{-0.15cm}
         \begin{array}{c}
           X' \\
           Y' \\
         \end{array}\hspace{-0.15cm}
       \right)$, $\left(\hspace{-0.15cm}
         \begin{array}{c}
           X'' \\
           Y'' \\
         \end{array}\hspace{-0.15cm}
       \right)\in \langle{\bf h}(\mathcal {X},\mathcal {Y})\rangle$}
        \end{gathered}
      \right.
    \right\}
  \end{equation*}

\begin{prp}\label{prp:hxyD}
Let $\mathcal{A}$ and $\mathcal{B}$ are abelian categories, $\mathcal{X}$ and $\mathcal{Y}$ subclasses of $\mathcal{A}$ and $\mathcal{B}$, respectively.
Considering the following conditions:
\begin{rqm}
  \item $\mathcal{X}$ and $\mathcal{Y}$ are closed under extensions in $\mathcal{A}$ and $\mathcal{B}$, respectively.
  \item $\langle{\bf h}(\mathcal {X},\mathcal {Y})\rangle=\mathfrak{D}^{\mathcal{X}}_{\mathcal{Y}}$ is the smallest subclass of $(\mathcal{B}\downarrow T)$ containing $\langle{\bf h}(\mathcal {X},\mathcal {Y})\rangle$ and closed under extensions.
  \item $\mathfrak{D}^{\mathcal{X}}_{\mathcal{Y}}$ is closed under extensions.
\end{rqm}
Then one has $(1)\Rightarrow(2)\Rightarrow(3)$. The converse holds if $T:\mathcal{A}\rightarrow\mathcal{B}$ is $\mathcal{X}$-exact.
\end{prp}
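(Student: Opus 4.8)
The plan is to prove the implications $(1)\Rightarrow(2)\Rightarrow(3)$ directly, and then to establish the converse $(3)\Rightarrow(1)$ under the hypothesis that $T$ is $\mathcal{X}$-exact. The heart of $(1)\Rightarrow(2)$ is the set-equality $\langle{\bf h}(\mathcal{X},\mathcal{Y})\rangle=\mathfrak{D}^{\mathcal{X}}_{\mathcal{Y}}$; once this is in hand the ``smallest subclass'' assertion is formal, since $\mathfrak{D}^{\mathcal{X}}_{\mathcal{Y}}$ will be shown closed under extensions and any extension-closed class containing the generators $\langle{\bf h}(\mathcal{X},\mathcal{Y})\rangle$ automatically contains it.

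For the inclusion $\langle{\bf h}(\mathcal{X},\mathcal{Y})\rangle\subseteq\mathfrak{D}^{\mathcal{X}}_{\mathcal{Y}}$, I would first check that each generator $\textbf{h}(X,Y)=\left(\begin{smallmatrix}X\\Y\oplus TX\end{smallmatrix}\right)$ with structure map the canonical projection $Y\oplus TX\onto TX$ indeed lies in $\mathfrak{D}^{\mathcal{X}}_{\mathcal{Y}}$: the projection is epic and its kernel is $Y\in\mathcal{Y}$. Then I would show $\mathfrak{D}^{\mathcal{X}}_{\mathcal{Y}}$ is closed under extensions in $(\mathcal{B}\downarrow T)$ using $(1)$. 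Given a short exact sequence $0\to\left(\begin{smallmatrix}X'\\Y'\end{smallmatrix}\right)_{\varphi'}\to\left(\begin{smallmatrix}X\\Y\end{smallmatrix}\right)_{\varphi}\to\left(\begin{smallmatrix}X''\\Y''\end{smallmatrix}\right)_{\varphi''}\to 0$ in $(\mathcal{B}\downarrow T)$ with the outer terms in $\mathfrak{D}^{\mathcal{X}}_{\mathcal{Y}}$, I would use that exactness in the comma category is componentwise to get $0\to X'\to X\to X''\to 0$ exact (so $X\in\mathcal{X}$ by extension-closure of $\mathcal{X}$) and $0\to Y'\to Y\to Y''\to 0$ exact; then run the snake lemma on the commutative diagram with rows the $Y$-sequence and the $TX$-sequence and columns $\varphi',\varphi,\varphi''$. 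Since $\varphi'$ and $\varphi''$ are epic, a diagram chase (or the snake sequence $\ker\varphi'\to\ker\varphi\to\ker\varphi''\to\operatorname{Coker}\varphi'$) gives that $\varphi$ is epic and that $0\to\ker\varphi'\to\ker\varphi\to\ker\varphi''\to 0$ is exact; extension-closure of $\mathcal{Y}$ then puts $\ker\varphi$ in $\mathcal{Y}$, so $\left(\begin{smallmatrix}X\\Y\end{smallmatrix}\right)_{\varphi}\in\mathfrak{D}^{\mathcal{X}}_{\mathcal{Y}}$. For the reverse inclusion $\mathfrak{D}^{\mathcal{X}}_{\mathcal{Y}}\subseteq\langle{\bf h}(\mathcal{X},\mathcal{Y})\rangle$, given $\left(\begin{smallmatrix}X\\Y\end{smallmatrix}\right)_{\varphi}$ with $\varphi$ epic and $K:=\ker\varphi\in\mathcal{Y}$, I would exhibit the canonical short exact sequence $0\to\textbf{h}(0,K)\to\left(\begin{smallmatrix}X\\Y\end{smallmatrix}\right)_{\varphi}\to\textbf{h}(X,0)\to 0$ (using that $\textbf{h}(0,K)=\left(\begin{smallmatrix}0\\K\end{smallmatrix}\right)$ and $\textbf{h}(X,0)=\left(\begin{smallmatrix}X\\TX\end{smallmatrix}\right)_{\mathrm{id}}$, and that $Y\to TX$ being epic with kernel $K$ makes this exact in the comma category); both ends are generators, so the middle is in $\langle{\bf h}(\mathcal{X},\mathcal{Y})\rangle$. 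This proves the equality, and $(2)\Rightarrow(3)$ is immediate since $(2)$ asserts $\mathfrak{D}^{\mathcal{X}}_{\mathcal{Y}}$ is the smallest extension-closed class containing something, hence is itself extension-closed.

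For the converse, assume $(3)$ and that $T$ is $\mathcal{X}$-exact, and deduce $(1)$. To show $\mathcal{X}$ is extension-closed, take $0\to X_1\to X_2\to X_3\to 0$ exact in $\mathcal{A}$ with $X_1,X_3\in\mathcal{X}$; applying $\textbf{h}(-,0)$ and using that $\textbf{h}(X_i,0)=\left(\begin{smallmatrix}X_i\\TX_i\end{smallmatrix}\right)_{\mathrm{id}}\in\mathfrak{D}^{\mathcal{X}}_{\mathcal{Y}}$ for $i=1,3$ (the identity is epic with zero kernel, and $0\in\mathcal{Y}$), I get a short exact sequence in $(\mathcal{B}\downarrow T)$ — here $\mathcal{X}$-exactness of $T$ is exactly what guarantees $0\to TX_1\to TX_2\to TX_3\to 0$ is exact, so that the sequence of triplets is genuinely short exact — whose outer terms lie in $\mathfrak{D}^{\mathcal{X}}_{\mathcal{Y}}$; by $(3)$ the middle term $\left(\begin{smallmatrix}X_2\\TX_2\end{smallmatrix}\right)_{\mathrm{id}}$ lies in $\mathfrak{D}^{\mathcal{X}}_{\mathcal{Y}}$, forcing $X_2\in\mathcal{X}$. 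To show $\mathcal{Y}$ is extension-closed, take $0\to Y_1\to Y_2\to Y_3\to 0$ exact in $\mathcal{B}$ with $Y_1,Y_3\in\mathcal{Y}$; apply $\textbf{h}(0,-)$ to get $0\to\left(\begin{smallmatrix}0\\Y_1\end{smallmatrix}\right)\to\left(\begin{smallmatrix}0\\Y_2\end{smallmatrix}\right)\to\left(\begin{smallmatrix}0\\Y_3\end{smallmatrix}\right)\to 0$ exact in $(\mathcal{B}\downarrow T)$ with outer terms in $\mathfrak{D}^{\mathcal{X}}_{\mathcal{Y}}$ (zero object of $\mathcal{A}$ in $\mathcal{X}$, structure map the zero map $Y_i\to T0=0$, vacuously epic with kernel $Y_i\in\mathcal{Y}$), and conclude by $(3)$ that the middle is in $\mathfrak{D}^{\mathcal{X}}_{\mathcal{Y}}$, whence $\ker(Y_2\to 0)=Y_2\in\mathcal{Y}$.

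The main obstacle I anticipate is the diagram-chase in the extension-closure proof for $\mathfrak{D}^{\mathcal{X}}_{\mathcal{Y}}$: one must be careful that epimorphness of $\varphi$ and exactness of $0\to\ker\varphi'\to\ker\varphi\to\ker\varphi''\to 0$ both fall out of the snake lemma applied to the right diagram, and that the structure maps are compatible with the chosen splittings/inclusions — this is where the precise bookkeeping of the comma-category morphisms matters, and where the hypothesis $0\in\mathcal{Y}$ (needed to identify $\textbf{h}(X,0)$ as an element of $\mathfrak{D}^{\mathcal{X}}_{\mathcal{Y}}$) should be noted. The role of $\mathcal{X}$-exactness is subtle but essential only in the converse: without it, applying $\textbf{h}$ to a short exact sequence in $\mathcal{A}$ need not yield a short exact sequence in the comma category (the bottom row $TX_1\to TX_2\to TX_3$ may fail to be exact on the left or need not have $TX_1\to TX_2$ monic in the relevant sense), so the deduction of $(1)$ from $(3)$ would break down.
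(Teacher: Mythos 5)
Your proposal is correct and follows essentially the same route as the paper: the same componentwise diagram chase (snake lemma in place of the paper's Five Lemma for the surjectivity of $\varphi$), the same canonical sequence $0\to\left(\begin{smallmatrix}0\\ \ker\varphi\end{smallmatrix}\right)\to\left(\begin{smallmatrix}X\\ Y\end{smallmatrix}\right)_\varphi\to\left(\begin{smallmatrix}X\\ TX\end{smallmatrix}\right)\to 0$ for the inclusion $\mathfrak{D}^{\mathcal{X}}_{\mathcal{Y}}\subseteq\langle{\bf h}(\mathcal{X},\mathcal{Y})\rangle$, and the same use of ${\bf h}(-,0)$ and ${\bf h}(0,-)$ with $\mathcal{X}$-exactness for $(3)\Rightarrow(1)$. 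Your explicit remark that $0\in\mathcal{X}$ and $0\in\mathcal{Y}$ are tacitly used is a fair observation (the paper uses this silently), but it does not change the argument.
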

\begin{prf*}
$(1)\Rightarrow(2)$ Let $\left(\begin{smallmatrix}X\\Y\end{smallmatrix}\right)_{\varphi}\in\langle{\bf h}(\mathcal {X},\mathcal {Y})\rangle$.
There is an exact sequence by definition
$$0\rightarrow
\left(\hspace{-0.15cm}\begin{array}{c} X'\\Y'\oplus TX' \\\end{array}\hspace{-0.15cm}\right)\rightarrow
\left(\hspace{-0.15cm}\begin{array}{c} X\\Y\\\end{array}\hspace{-0.15cm}\right)_\varphi\rightarrow
\left(\hspace{-0.15cm}\begin{array}{c} X''\\Y''\oplus TX'' \\\end{array}\hspace{-0.15cm}\right)\rightarrow0,$$
which yields an exact sequence $0\rightarrow X^{'}\rightarrow X\rightarrow X^{''}\rightarrow0$ in $\mathcal{A}$. Consequently, one has the following commutative diagram with exact rows:
$$\xymatrix{0\ar[r]&Y'\oplus TX'\ar[r]\ar[d]&Y\ar[r]\ar[d]^\varphi&Y''\oplus TX''\ar[r]\ar[d]&0\\
    0\ar[r]&TX'\ar[r]&TX\ar[r]&TX''.}$$
We can get that $TX\rightarrow TX^{''}$ is an epimorphism. It follows from Five Lemma that $\varphi$ is a epimorphism, and  $X\in\mathcal{X}$ since $\mathcal{X}$ is closed under extensions. By the Snake Lemma, there is an exact sequence $0\rightarrow Y'\rightarrow\textrm{ker}\varphi\rightarrow Y^{''}\rightarrow0$ which implies $\textrm{ker}\varphi\in\mathcal{Y}$ since $\mathcal{Y}$ is closed under extensions.

Conversely, assume that $\left(\begin{smallmatrix}X\\Y\end{smallmatrix}\right)_{\varphi}\in\mathfrak{D}^{\mathcal{X}}_{\mathcal{Y}}$.
Then the exact sequence $$0\rightarrow
\left(\hspace{-0.15cm}\begin{array}{c} 0\\ \rm{ker}\varphi\\\end{array}\hspace{-0.15cm}\right)\rightarrow
\left(\hspace{-0.15cm}\begin{array}{c} X\\Y\\\end{array}\hspace{-0.15cm}\right)_\varphi\rightarrow
\left(\hspace{-0.15cm}\begin{array}{c} X\\TX\\\end{array}\hspace{-0.15cm}\right)\rightarrow0$$ yields that $\left(\begin{smallmatrix}X\\Y\end{smallmatrix}\right)_{\varphi}\in\langle{\bf h}(\mathcal {X},\mathcal {Y})\rangle$.
By the proof above, one can get that $\langle{\bf h}(\mathcal {X},\mathcal {Y})\rangle$ is closed under extensions. So $\langle{\bf h}(\mathcal {X},\mathcal {Y})\rangle$ is the smallest subclass of $(\mathcal{B}\downarrow T)$ containing ${\bf h}(\mathcal {X},\mathcal {Y})$ and closed under extensions.

$(2)\Rightarrow(3)$ It is trivial.

$(3)\Rightarrow(1)$ Let $0\rightarrow A'\rightarrow A\rightarrow A''\rightarrow0$ be an exact sequence in $\mathcal{A}$ with $A',A''\in{\mathcal{X}}$. Then there is an exact sequence$0\rightarrow TA'\rightarrow TA\rightarrow TA''\rightarrow0$ in $\mathcal{B}$ with $T$ is $\mathcal{X}$-exact by hypothesis.
Now there is an exact sequence $0\rightarrow\left(\begin{smallmatrix}A'\\TA' \end{smallmatrix}\right)\rightarrow \left(\begin{smallmatrix}A\\TA\end{smallmatrix}\right)\rightarrow\left(\begin{smallmatrix} A''\\TA''\end{smallmatrix}\right)\rightarrow0 $  in $(\mathcal{B}\downarrow T)$ with $\left(\begin{smallmatrix}A'\\TA'\end{smallmatrix}\right)$, $\left(\begin{smallmatrix}A''\\TA''\end{smallmatrix}\right)\in \mathfrak{D}^{\mathcal{X}}_{\mathcal{Y}}$ by hypothesis. It follows that $\left(\begin{smallmatrix}A\\TA\end{smallmatrix}\right)\in  \mathfrak{D}^{\mathcal{X}}_{\mathcal{Y}}$, which implies that $A\in{\mathcal{X}}$. Moreover, let $0\rightarrow B'\rightarrow B\rightarrow B''\rightarrow0$ be an exact sequence in $\mathcal{B}$ with $B',B''\in\mathcal{Y}$. Accordingly, we get an exact sequence $0\rightarrow\left(\begin{smallmatrix}0\\B' \end{smallmatrix}\right)\rightarrow \left(\begin{smallmatrix}0\\B\end{smallmatrix}\right)\rightarrow\left(\begin{smallmatrix} 0\\B''\end{smallmatrix}\right)\rightarrow0$   in $(\mathcal{B}\downarrow T)$ with $\left(\begin{smallmatrix}0\\B'\end{smallmatrix}\right)$, $\left(\begin{smallmatrix}0\\B''\end{smallmatrix}\right)\in{ \mathfrak{D}^{\mathcal{X}}_{\mathcal{Y}}}$. Thus $\left(\begin{smallmatrix}0\\B\end{smallmatrix}\right)\in{ \mathfrak{D}^{\mathcal{X}}_{\mathcal{Y}}}$ by hypothesis, and hence we get that $B\in{\mathcal{Y}}$. So $\mathcal{Y}$ is closed under extensions, as desired.
\end{prf*}

\begin{prp}
Let $T:\mathcal{A}\rightarrow\mathcal{B}$ be a left exact functor. The following are true:
 \begin{enumerate}
\item $T$ is $\mathcal {X}$-exact if and only if two exact sequences $0\rightarrow X\rightarrow A\rightarrow A''\rightarrow0$ in $\mathcal{A}$ and $0\rightarrow B'\rightarrow B\rightarrow B''\rightarrow0$ in $\mathcal{B}$ with $X\in{\mathcal{X}}$ induce an exact sequence $0\rightarrow{\bf h}(X,~B')\rightarrow{\bf h}(A,~B)\rightarrow{\bf h}(A'',~B'')\rightarrow0 $ in $(\mathcal{B}\downarrow T)$.
 \item $T$ is exact if and only if ${\bf h}:\mathcal{A}\times\mathcal{B}\rightarrow(\mathcal{B}\downarrow T)$ is exact.
 \end{enumerate}
\end{prp}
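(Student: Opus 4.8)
The plan is to reduce everything to the fact that exactness in $(\mathcal{B}\downarrow T)$ is detected coordinatewise. Since $T$ is left exact, the kernel (resp.\ cokernel) of a morphism $\left(\begin{smallmatrix}\alpha\\ \beta\end{smallmatrix}\right)$ in $(\mathcal{B}\downarrow T)$ is computed by taking $\Ker{\alpha}$ (resp.\ $\Coker{\alpha}$) in $\mathcal{A}$ and $\Ker{\beta}$ (resp.\ $\Coker{\beta}$) in $\mathcal{B}$; consequently a sequence $0\to\left(\begin{smallmatrix}A_{1}\\B_{1}\end{smallmatrix}\right)\to\left(\begin{smallmatrix}A_{2}\\B_{2}\end{smallmatrix}\right)\to\left(\begin{smallmatrix}A_{3}\\B_{3}\end{smallmatrix}\right)\to0$ in $(\mathcal{B}\downarrow T)$ is exact if and only if $0\to A_{1}\to A_{2}\to A_{3}\to0$ is exact in $\mathcal{A}$ and $0\to B_{1}\to B_{2}\to B_{3}\to0$ is exact in $\mathcal{B}$. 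I would record this first; it is already used implicitly in the proof of \prpref{hxyD}.

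For (1), I would unwind the definition of $\textbf{h}$. Given exact sequences $0\to X\to A\xrightarrow{p}A''\to0$ in $\mathcal{A}$ with $X\in\mathcal{X}$ and $0\to B'\to B\to B''\to0$ in $\mathcal{B}$, the induced sequence $0\to\textbf{h}(X,B')\to\textbf{h}(A,B)\to\textbf{h}(A'',B'')\to0$ has $\mathcal{A}$-coordinate $0\to X\to A\to A''\to0$, which is exact, and $\mathcal{B}$-coordinate the direct sum of $0\to B'\to B\to B''\to0$ with $0\to TX\to TA\xrightarrow{Tp}TA''\to0$. Left exactness of $T$ makes $0\to TX\to TA\xrightarrow{Tp}TA''$ exact, so by the coordinatewise criterion the induced $\textbf{h}$-sequence is exact in $(\mathcal{B}\downarrow T)$ if and only if $Tp$ is an epimorphism. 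The two implications then follow at once: if $T$ is $\mathcal{X}$-exact, then $Tp$ is epic for every admissible choice of data, so every such pair of sequences induces an exact sequence; conversely, specialising to $B'=B=B''=0$ shows that $Tp$ is epic for every short exact sequence $0\to X\to A\to A''\to0$ in $\mathcal{A}$ with $X\in\mathcal{X}$, which together with left exactness is exactly the assertion that $T$ is $\mathcal{X}$-exact.

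For (2), I would deduce it from (1) applied with $\mathcal{X}=\mathcal{A}$. A short exact sequence in $\mathcal{A}\times\mathcal{B}$ is precisely a short exact sequence in $\mathcal{A}$ together with one in $\mathcal{B}$. Since $\textbf{h}$ is additive and left exact (being a right adjoint of $\textbf{q}$, by \rmkref{functorH}), it is exact if and only if it sends every short exact sequence of $\mathcal{A}\times\mathcal{B}$ to a short exact sequence, i.e.\ if and only if every pair of short exact sequences in $\mathcal{A}$ and $\mathcal{B}$ induces a short exact sequence $0\to\textbf{h}(A',B')\to\textbf{h}(A,B)\to\textbf{h}(A'',B'')\to0$ in $(\mathcal{B}\downarrow T)$. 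By part (1) with $\mathcal{X}=\mathcal{A}$, this holds if and only if $T$ is $\mathcal{A}$-exact, that is, exact. (Alternatively, one argues directly: if $\textbf{h}$ is exact, apply it to $0\to(A',0)\to(A,0)\to(A'',0)\to0$ and read off exactness of $0\to TA'\to TA\to TA''\to0$ from the $\mathcal{B}$-coordinate; the converse is the coordinatewise computation from (1).)

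I do not anticipate a real obstacle. The one point deserving care is recognising that, beyond the left exactness already assumed of $T$, the only extra content of exactness of the $\textbf{h}$-sequence is surjectivity of $Tp\colon TA\to TA''$, and that in the ``only if'' direction of (1) one is entitled to take the sequence in $\mathcal{B}$ to be the zero sequence.
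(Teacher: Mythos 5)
Your proposal is correct and follows essentially the same route as the paper: exactness in $(\mathcal{B}\downarrow T)$ is checked coordinatewise, the forward direction of (1) amounts to $T$ carrying $0\to X\to A\to A''\to 0$ to a short exact sequence, the converse specialises to the zero sequence in $\mathcal{B}$ (i.e.\ to ${\bf h}(-,0)$), and (2) is the case $\mathcal{X}=\mathcal{A}$. Your write-up merely makes explicit the coordinatewise criterion that the paper uses implicitly.
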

\begin{proof}  (1) Assume that $T:\mathcal{A}\rightarrow\mathcal{B}$ is $\mathcal{X}$-exact. Let $0\rightarrow X\rightarrow A\rightarrow A''\rightarrow0$ be an exact sequence in $\mathcal{A}$ with $X\in\mathcal{X}$ and $0\rightarrow B'\rightarrow B\rightarrow B''\rightarrow0$ in $\mathcal{B}$ be an exact sequence. Accordingly, there is an exact sequence $0\rightarrow TX\rightarrow TA\rightarrow TA''\rightarrow0$ in $\mathcal{B}$. So the sequence $0\rightarrow{\bf h}(X,~B')\rightarrow{\bf h}(A,~B)\rightarrow{\bf h}(A'',~B'')\rightarrow0 $ is exact in $(\mathcal{B}\downarrow T)$.

Conversely, let $0\rightarrow X\rightarrow A\rightarrow A''\rightarrow0$ be an exact sequence in $\mathcal{A}$ with $X\in\mathcal {X}$. Note that $0\rightarrow{\bf h}(X,~0)\rightarrow{\bf h}(A,~0)\rightarrow{\bf h}(A'',~0)\rightarrow0 $ is an exact sequence in $(\mathcal{B}\downarrow T)$ by hypothesis. So $0\rightarrow TX\rightarrow TA\rightarrow TA''\rightarrow0$ is an exact sequence in $\mathcal{B}$, $T$ is $X$-exact. (2) is a direct consequence of (1).
\end{proof}

\begin{prp}\label{prp:Xexact}
Let $T:\mathcal{A}\rightarrow\mathcal{B}$ be $\mathcal {X}$-exact. Then  $^{\bot}\langle{\bf h}(\mathcal {X},\mathcal {Y})\rangle=(\begin{smallmatrix}^{\bot}\mathcal{X}\\^{\bot}\mathcal{Y}\end{smallmatrix})$ holds in the category $(\mathcal{
B}\downarrow T)$.
\end{prp}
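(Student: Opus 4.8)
The plan is to reduce the left orthogonal to two elementary extension computations inside $(\mathcal{B}\downarrow T)$, one governed by $\mathcal{A}$ and one by $\mathcal{B}$. Since $\langle{\bf h}(\mathcal{X},\mathcal{Y})\rangle$ is the extension closure of $\{{\bf h}(X,Y)\mid X\in\mathcal{X},\,Y\in\mathcal{Y}\}$, and vanishing of $\operatorname{Ext}^{1}_{(\mathcal{B}\downarrow T)}(M,-)$ is preserved under extensions (long exact sequence), we have ${}^{\bot}\langle{\bf h}(\mathcal{X},\mathcal{Y})\rangle={}^{\bot}\{{\bf h}(X,Y)\mid X\in\mathcal{X},\,Y\in\mathcal{Y}\}$; and since ${\bf h}(X,Y)={\bf h}(X,0)\oplus{\bf h}(0,Y)$ by Remark~\ref{rmk:functorH}, an object $M=\left(\begin{smallmatrix}A\\B\end{smallmatrix}\right)_{\psi}$ lies in ${}^{\bot}\langle{\bf h}(\mathcal{X},\mathcal{Y})\rangle$ if and only if $\operatorname{Ext}^{1}_{(\mathcal{B}\downarrow T)}(M,{\bf h}(X,0))=0=\operatorname{Ext}^{1}_{(\mathcal{B}\downarrow T)}(M,{\bf h}(0,Y))$ for all $X\in\mathcal{X}$ and $Y\in\mathcal{Y}$. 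I will show the first family of vanishings is equivalent to $A\in\,^{\bot}\mathcal{X}$ and the second to $B\in\,^{\bot}\mathcal{Y}$; this gives the claim. (We may assume $\mathcal{X}$ and $\mathcal{Y}$ are nonempty.)

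For the second family, observe ${\bf h}(0,Y)=\left(\begin{smallmatrix}0\\Y\end{smallmatrix}\right)_{0}$, so in any short exact sequence $0\to{\bf h}(0,Y)\to\left(\begin{smallmatrix}A\\E\end{smallmatrix}\right)_{\xi}\xrightarrow{(1_{A},\pi)}M\to 0$ the $\mathcal{A}$-component is (up to isomorphism) $1_{A}$, the $\mathcal{B}$-component is a short exact sequence $0\to Y\to E\xrightarrow{\pi}B\to 0$, and commutativity forces $\xi=\psi\pi$; conversely each extension $0\to Y\to B_{0}\xrightarrow{\rho}B\to 0$ in $\mathcal{B}$ yields $0\to{\bf h}(0,Y)\to\left(\begin{smallmatrix}A\\B_{0}\end{smallmatrix}\right)_{\psi\rho}\xrightarrow{(1_{A},\rho)}M\to 0$. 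These operations are mutually inverse, so $\operatorname{Ext}^{1}_{(\mathcal{B}\downarrow T)}(M,{\bf h}(0,Y))\cong\operatorname{Ext}^{1}_{\mathcal{B}}(B,Y)$, and the second family of vanishings holds exactly when $B\in\,^{\bot}\mathcal{Y}$. This part uses nothing about $T$.

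The first family is the heart of the matter; here ${\bf h}(X,0)=\left(\begin{smallmatrix}X\\TX\end{smallmatrix}\right)_{1_{TX}}$. The key point is that in any short exact sequence $0\to{\bf h}(X,0)\xrightarrow{(\iota,\iota')}\left(\begin{smallmatrix}A_{0}\\E\end{smallmatrix}\right)_{\xi}\xrightarrow{(\pi_{A},\pi_{B})}M\to 0$ the $\mathcal{B}$-component $0\to TX\xrightarrow{\iota'}E\xrightarrow{\pi_{B}}B\to 0$ splits as soon as the $\mathcal{A}$-component $0\to X\xrightarrow{\iota}A_{0}\xrightarrow{\pi_{A}}A\to 0$ does, because a retraction $q\colon A_{0}\to X$ of $\iota$ produces, via the commuting square $\xi\iota'=T\iota$, the retraction $Tq\circ\xi$ of $\iota'$. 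Assuming $A\in\,^{\bot}\mathcal{X}$, choose a section $s$ of $\pi_{A}$ with complementary retraction $q$ and a section $j$ of $\pi_{B}$ with $Tq\,\xi j=0$; comparing $T\pi_{A}$- and $Tq$-components under $TA_{0}\cong TA\oplus TX$ (additivity of $T$), and using $\psi\pi_{B}=T\pi_{A}\xi$, $\pi_{A}s=1_{A}$, $qs=0$, a short computation gives $\xi j=Ts\,\psi$, so $(s,j)\colon M\to\left(\begin{smallmatrix}A_{0}\\E\end{smallmatrix}\right)_{\xi}$ is a morphism in $(\mathcal{B}\downarrow T)$ splitting the sequence; hence $\operatorname{Ext}^{1}_{(\mathcal{B}\downarrow T)}(M,{\bf h}(X,0))=0$ for every $X\in\mathcal{X}$. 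For the converse, suppose $M\in\,^{\bot}\langle{\bf h}(\mathcal{X},\mathcal{Y})\rangle$ and let $0\to X\to A_{0}\xrightarrow{p}A\to 0$ be any extension in $\mathcal{A}$ with $X\in\mathcal{X}$. Now $\mathcal{X}$-exactness of $T$ makes $Tp$ an epimorphism, so applying ${\bf h}(-,0)$ gives an exact sequence $0\to{\bf h}(X,0)\to{\bf h}(A_{0},0)\to{\bf h}(A,0)\to 0$ in $(\mathcal{B}\downarrow T)$; pulling it back along the canonical morphism $(1_{A},\psi)\colon M\to{\bf h}(A,0)$ produces a short exact sequence $0\to{\bf h}(X,0)\to P\to M\to 0$ whose $\mathcal{A}$-component is the given one. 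Since ${\bf h}(X,0)$ is a direct summand of ${\bf h}(X,Y)\in\langle{\bf h}(\mathcal{X},\mathcal{Y})\rangle$ for any $Y\in\mathcal{Y}$, this sequence splits, hence so does its $\mathcal{A}$-component; as $X$ and the extension were arbitrary, $A\in\,^{\bot}\mathcal{X}$.

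Assembling the three steps yields ${}^{\bot}\langle{\bf h}(\mathcal{X},\mathcal{Y})\rangle=(\begin{smallmatrix}^{\bot}\mathcal{X}\\^{\bot}\mathcal{Y}\end{smallmatrix})$, as wanted. I expect the only genuine difficulty to be the ``$\mathcal{A}$-part'': recognizing that the $\mathcal{B}$-component of an extension by ${\bf h}(X,0)$ is automatically split (so that such extensions are controlled entirely by $\operatorname{Ext}^{1}_{\mathcal{A}}(A,X)$), and then using $\mathcal{X}$-exactness of $T$ to promote an arbitrary class in $\operatorname{Ext}^{1}_{\mathcal{A}}(A,X)$ to an honest extension in $(\mathcal{B}\downarrow T)$ — it is precisely at this last step that $\mathcal{X}$-exactness is indispensable, and without it the left orthogonal can be strictly larger.
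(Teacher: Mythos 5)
Your proof is correct and follows essentially the same route as the paper: reduce via ${\bf h}(X,Y)={\bf h}(X,0)\oplus{\bf h}(0,Y)$ and extension-closure to the two families ${\bf h}(\mathcal{X},0)$, ${\bf h}(0,\mathcal{Y})$, split such extensions when $A\in{}^{\bot}\mathcal{X}$ and $B\in{}^{\bot}\mathcal{Y}$, and use $\mathcal{X}$-exactness of $T$ together with a pullback along the structure map $\psi\colon B\to TA$ to realize an arbitrary $\mathcal{A}$-extension as an extension in $(\mathcal{B}\downarrow T)$ for the reverse inclusion. The only cosmetic difference is that you build a compatible section of the epimorphism (via the computation $\xi j=Ts\,\psi$) where the paper exhibits a retraction of the monomorphism, e.g. $(m',\,Tm'\varphi)$.
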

\begin{proof}
It follows from \rmkref{functorH} that
$$^{\bot}\langle{\bf h}(\mathcal{X},~\mathcal{Y})\rangle=^{\bot}{\bf h}(\mathcal{X},~\mathcal{Y})=^{\bot}{\bf h}(\mathcal{X},~0)\cap{^{\bot}\bf h}(0,~\mathcal{Y}).$$
Firstly, we claim that $(\begin{smallmatrix}^{\bot}\mathcal{X}\\^{\bot}\mathcal{Y} \end{smallmatrix}) \subseteq{^{\bot}\bf h}(\mathcal {X},\mathcal {Y})$.
Let $\left(\begin{smallmatrix}A\\B \end{smallmatrix}\right)\in (\begin{smallmatrix}^{\bot}\mathcal{X}\\ ^{\bot}\mathcal {Y} \end{smallmatrix})$. It is sufficient to show the following exact sequences $$\zeta:0\rightarrow\left(\hspace{-0.15cm}\begin{array}{c} X\\TX\\\end{array}\hspace{-0.15cm}\right)
\stackrel{{\tiny\left(\begin{smallmatrix}\hspace{-0.1cm}m\hspace{-0.1cm}\\
\hspace{-0.1cm}n\hspace{-0.1cm}\end{smallmatrix}\right)}}{\rightarrow} \left(\hspace{-0.15cm}\begin{array}{c} M\\N\\\end{array}\hspace{-0.15cm}\right){\rightarrow}
\left(\hspace{-0.15cm}\begin{array}{c} A\\B\\\end{array}\hspace{-0.15cm}\right)\rightarrow0~\rm{and}~ \xi:0\rightarrow\left(\hspace{-0.15cm}\begin{array}{c} 0\\Y\\\end{array}\hspace{-0.15cm}\right)
\stackrel{{\tiny\left(\begin{smallmatrix}\hspace{-0.05cm}0\hspace{-0.05cm}\\
\hspace{-0.1cm}f\hspace{-0.1cm}\end{smallmatrix}\right)}}{\rightarrow} \left(\hspace{-0.15cm}\begin{array}{c} A\\C\\\end{array}\hspace{-0.15cm}\right){\rightarrow}
\left(\hspace{-0.15cm}\begin{array}{c} A\\B\\\end{array}\hspace{-0.15cm}\right)\rightarrow0$$
are split for any $X\in{\mathcal{X}}$ and $Y\in{\mathcal{Y}}$.
There is a split exact sequence $0\rightarrow X\stackrel{{\tiny{m}}}{\rightarrow}M\rightarrow A\rightarrow0$ in $\mathcal{A}$, thus existing $m':M\rightarrow X$ such that $m'm=1_{X}$.
It follows that $(Tm'\varphi)Tn=(Tm')Tm=T(m'm)=1_{TX}$, i.e.,   $(\begin{smallmatrix}m'\\Tm'\varphi\end{smallmatrix})\left(\begin{smallmatrix}m\\n\end{smallmatrix}\right)=1$  which implies that the sequence $\zeta$ is split, as desired. Similarly, it is easy to see that $\xi$ is split since the exact sequence $0\rightarrow Y\rightarrow C\rightarrow D\rightarrow0$ is split.

Next we claim that $^{\bot}\langle{\bf h}(\mathcal {X},\mathcal {Y})\rangle \subseteq (\begin{smallmatrix}^{\bot}\mathcal{X}\\ ^{\bot}\mathcal {Y} \end{smallmatrix})$. We only need to show that $^{\bot}{\bf h}(\mathcal {X},0)\subseteq\left(\begin{smallmatrix}^{\bot}\mathcal {X}\\ \mathcal{B} \end{smallmatrix}\right)$ and $^{\bot}{\bf h}(0,\mathcal {Y})\subseteq(\begin{smallmatrix}\mathcal{A}\\^{\bot}\mathcal{Y}\end{smallmatrix})$.
Let $\left(\begin{smallmatrix}A\\B\end{smallmatrix}\right)$ be an object an $^{\bot}{\bf h}(\mathcal {X},0)$. Assume that $\varepsilon:0\rightarrow X\rightarrow M\rightarrow A\rightarrow0$ is an exact sequence in $\mathcal{A}$ with $X\in\mathcal{X}$. It follows from $T$ is $\mathcal{X}-$exact that  $0\rightarrow TX\rightarrow TM\rightarrow TA\rightarrow0$ is exact in $\mathcal{B}$. Consider the following pullback diagram:
 $$\xymatrix{0\ar[r]&TX\ar[r]\ar[d]&N\ar[r]\ar[d]&B\ar[r]\ar@{=}[d]&0\\
    0\ar[r]&TX \ar[r]&TM\ar[r]&TA \ar[r]&0.}$$
There is an exact sequence $\varrho:0\rightarrow\left(\begin{smallmatrix}X\\TX \end{smallmatrix}\right){\rightarrow} \left(\begin{smallmatrix}M\\N\end{smallmatrix}\right)\rightarrow\left(\begin{smallmatrix}A\\B\end{smallmatrix}\right)\rightarrow0$ in $(\mathcal{B}\downarrow T)$. Hence the sequence $\varrho$ is split by hypothesis, so the sequence $\varepsilon$ is split, i.e. $A\in\,^{\bot}\mathcal{X}$. Consequently, $^{\bot}{\bf h}(\mathcal {X},0)\subseteq(\begin{smallmatrix}^{\bot}\mathcal {X}\\ \mathcal{B}\end{smallmatrix})$.

Let $\left(\begin{smallmatrix}A\\B\end{smallmatrix}\right)$ be an object in $^{\bot}{\bf h}(0,\mathcal {X})$ and $\sigma:0\rightarrow Y\rightarrow D\rightarrow B\rightarrow0$ is an exact sequence in $\mathcal{B}$ with $Y\in\mathcal{Y}$. Then there is an exact sequence $\theta:0\rightarrow\left(\begin{smallmatrix}0\\Y \end{smallmatrix}\right){\rightarrow} \left(\begin{smallmatrix}A\\D\end{smallmatrix}\right)\rightarrow\left(\begin{smallmatrix}A\\B\end{smallmatrix}\right)\rightarrow0$ and $\theta$ is split by hypothesis. So the sequence $\sigma$ is split, i.e., $B\in\,^{\bot}\mathcal{Y}$. Consequently, $^{\bot}{\bf h}(0,~\mathcal {Y})\subseteq(\begin{smallmatrix}\mathcal{A}\\ ^{\bot}\mathcal{Y}\end{smallmatrix})$.
\end{proof}

\begin{lem}\label{lem:projective}
 If $\mathcal{B}$ has enough projective objects, then $\langle{\bf h}({\mathcal{A}},~{\mathcal {B}})\rangle={\left(\begin{smallmatrix}0\\ \mathcal {P}\end{smallmatrix}\right)^{\bot}}$, where $\mathcal{P}$ is the class of projective objects in $\mathcal{B}$.
\end{lem}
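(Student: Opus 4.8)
The plan is to first identify the class $\langle{\bf h}(\mathcal{A},\mathcal{B})\rangle$ explicitly, and then to compute the right orthogonal $\left(\begin{smallmatrix}0\\ \mathcal{P}\end{smallmatrix}\right)^{\bot}$ directly by deciding exactly when the relevant extensions split. Since $\mathcal{A}$ and $\mathcal{B}$ are trivially closed under extensions, the implication $(1)\Rightarrow(2)$ of \prpref{hxyD} (whose proof uses no hypothesis on $T$) applies with $\mathcal{X}=\mathcal{A}$ and $\mathcal{Y}=\mathcal{B}$ and gives $\langle{\bf h}(\mathcal{A},\mathcal{B})\rangle=\mathfrak{D}^{\mathcal{A}}_{\mathcal{B}}$; unwinding the definition of $\mathfrak{D}$, this says that $\langle{\bf h}(\mathcal{A},\mathcal{B})\rangle$ is precisely the class of objects $\left(\begin{smallmatrix}A\\ B\end{smallmatrix}\right)_{\varphi}$ of $(\mathcal{B}\downarrow T)$ whose structure morphism $\varphi$ is an epimorphism. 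So it suffices to prove, for an object $\left(\begin{smallmatrix}A\\ B\end{smallmatrix}\right)_{\varphi}$, that $\left(\begin{smallmatrix}A\\ B\end{smallmatrix}\right)_{\varphi}\in\left(\begin{smallmatrix}0\\ \mathcal{P}\end{smallmatrix}\right)^{\bot}$ if and only if $\varphi$ is an epimorphism.

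For the ``if'' direction I would take an arbitrary exact sequence $0\to\left(\begin{smallmatrix}A\\ B\end{smallmatrix}\right)_{\varphi}\to\left(\begin{smallmatrix}A'\\ B'\end{smallmatrix}\right)_{\varphi'}\to\left(\begin{smallmatrix}0\\ P\end{smallmatrix}\right)\to0$ with $P\in\mathcal{P}$. Comparing $\mathcal{A}$-components gives $A'\cong A$, so we may take $A'=A$; comparing $\mathcal{B}$-components gives an exact sequence $0\to B\xra{i}B'\xra{p}P\to0$ with $\varphi'i=\varphi$, whence $\varphi'$ is an epimorphism. Since $P$ is projective, choose $s_{0}\colon P\to B'$ with $ps_{0}=1_{P}$; since $\varphi$ is an epimorphism and $P$ is projective, $\varphi's_{0}\colon P\to TA$ factors as $\varphi t$ for some $t\colon P\to B$. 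Then $s:=s_{0}-it$ satisfies $ps=1_{P}$ and $\varphi's=0$, so $\left(\begin{smallmatrix}0\\ s\end{smallmatrix}\right)$ is a morphism $\left(\begin{smallmatrix}0\\ P\end{smallmatrix}\right)\to\left(\begin{smallmatrix}A\\ B'\end{smallmatrix}\right)_{\varphi'}$ splitting the sequence; hence $\left(\begin{smallmatrix}A\\ B\end{smallmatrix}\right)_{\varphi}\in\left(\begin{smallmatrix}0\\ \mathcal{P}\end{smallmatrix}\right)^{\bot}$.

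For the ``only if'' direction I would use that $\mathcal{B}$ has enough projectives to pick an epimorphism $\pi\colon P\onto TA$ with $P\in\mathcal{P}$, and then form the exact sequence $0\to\left(\begin{smallmatrix}A\\ B\end{smallmatrix}\right)_{\varphi}\to\left(\begin{smallmatrix}A\\ B\oplus P\end{smallmatrix}\right)_{\Phi}\to\left(\begin{smallmatrix}0\\ P\end{smallmatrix}\right)\to0$ in $(\mathcal{B}\downarrow T)$, where $\Phi\colon B\oplus P\to TA$ restricts to $\varphi$ on $B$ and to $\pi$ on $P$, the outer maps being the obvious inclusion and projection. By hypothesis this splits, so there is a morphism $\left(\begin{smallmatrix}0\\ s\end{smallmatrix}\right)\colon\left(\begin{smallmatrix}0\\ P\end{smallmatrix}\right)\to\left(\begin{smallmatrix}A\\ B\oplus P\end{smallmatrix}\right)_{\Phi}$ with $s=\binom{t}{1_{P}}$ for some $t\colon P\to B$ and $\Phi s=0$; the last equation reads $\varphi t+\pi=0$, so $\pi=-\varphi t$ factors through $\varphi$, and since $\pi$ is an epimorphism so is $\varphi$. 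Together with the previous paragraph this yields $\left(\begin{smallmatrix}0\\ \mathcal{P}\end{smallmatrix}\right)^{\bot}=\mathfrak{D}^{\mathcal{A}}_{\mathcal{B}}=\langle{\bf h}(\mathcal{A},\mathcal{B})\rangle$.

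I expect the only real obstacle to be the bookkeeping inside the comma category: correctly describing morphisms and extensions out of $\left(\begin{smallmatrix}0\\ P\end{smallmatrix}\right)$ as pairs constrained by the defining commutative square, and, crucially, noticing that ``enough projectives in $\mathcal{B}$'' is invoked to cover $TA$ rather than $B$ — this is exactly what lets one correct the naive $\mathcal{B}$-splitting $s_{0}$ into a genuine splitting in $(\mathcal{B}\downarrow T)$. The remaining ingredients (the diagram chase packaged into \prpref{hxyD} and the direct-sum manipulations) are routine.
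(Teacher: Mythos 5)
Your proposal is correct and follows essentially the same route as the paper: it identifies $\langle{\bf h}(\mathcal{A},\mathcal{B})\rangle$ with the objects whose structure map is an epimorphism via \prpref{hxyD} (whose implication $(1)\Rightarrow(2)$ indeed needs no exactness hypothesis on $T$), uses enough projectives in $\mathcal{B}$ to cover $TA$ and the splitting of the resulting $\left(\begin{smallmatrix}A\\ B\oplus P\end{smallmatrix}\right)$-extension to get $\varphi$ epimorphic, and for the converse corrects a naive $\mathcal{B}$-splitting using projectivity of $P$ together with surjectivity of $\varphi$. The only (immaterial) difference is that you exhibit a section $\left(\begin{smallmatrix}0\\ s\end{smallmatrix}\right)$ of the extension, whereas the paper builds the corresponding retraction $\left(\begin{smallmatrix}1\\ q\end{smallmatrix}\right)$ via the cokernel's universal property.
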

\begin{proof}
Let $\mathcal{B}$ have enough projective objects and $\mathcal{P}$ be the class of projective objects in $\mathcal{B}$. Assume that$\left(\begin{smallmatrix}A\\B\end{smallmatrix}\right)_\varphi\in{\left(\begin{smallmatrix}0\\ \mathcal{P} \end{smallmatrix}\right)^{\bot}}$ and there is a epimorphism $\sigma:P\rightarrow TA$ with $P$ is a projective object in $\mathcal{B}$. Then one has the following commutative diagram with exact rows:
$$\xymatrix{0\ar[r]&B \ar[r]^{\hspace{-0.3cm}\left(\begin{smallmatrix}1\\ 0\end{smallmatrix}\right)}\ar[d]^{\varphi}&B\oplus P\ar[r]^{\hspace{0.3cm}(0,1)}\ar[d]^{(\varphi,\sigma)}&P \ar[r]\ar[d]&0\\
0\ar[r]&T(A)\ar@{=}[r]&T(A)\ar[r]&0\ar[r] &0
.}$$

Clearly, we have an exact sequence $\xi:0\rightarrow\left(\begin{smallmatrix}A\\B\end{smallmatrix}\right){\rightarrow} \left(\begin{smallmatrix}A\\B\oplus P\end{smallmatrix}\right)\rightarrow\left(\begin{smallmatrix}0\\P\end{smallmatrix}\right)\rightarrow0$ in $(\mathcal{B}\downarrow T)$ and $\xi$ is split by hypothesis. It follows that there is a morphism $\left(\begin{smallmatrix}f\\g\end{smallmatrix}\right):P\rightarrow P\oplus B$ such that $(0,1)\left(\begin{smallmatrix}f\\g\end{smallmatrix}\right)=1_P$ and $(\varphi,~\sigma)\left(\begin{smallmatrix}f\\g\end{smallmatrix}\right)=0$ which implies $g=1_P$ and $\varphi f=-\sigma$. So $\varphi$ is a epimorphism, i.e.,  $\left(\begin{smallmatrix}A\\B\end{smallmatrix}\right)_\varphi\in\langle{\bf h}({\mathcal {A}},~{\mathcal {B}})\rangle$ by \prpref{hxyD}.

Conversely, let $\left(\begin{smallmatrix}A\\B\end{smallmatrix}\right)_\varphi\in\langle{\bf h}({\mathcal{A}},~{\mathcal{B}})\rangle$. Consider the exact sequence $$\sigma:0\rightarrow\left(\hspace{-0.15cm}\begin{array}{c} A\\B\\\end{array}\hspace{-0.15cm}\right)_\varphi
\overset{{{\tiny\left(\begin{smallmatrix}\hspace{-0.08cm}1\hspace{-0.1cm}\\ \hspace{-0.08cm}\pi\hspace{-0.1cm}\end{smallmatrix}\right)}}}\longrightarrow \left(\hspace{-0.15cm}\begin{array}{c} A\\M\\\end{array}\hspace{-0.15cm}\right)_{\phi}
\overset{{{\tiny\left(\begin{smallmatrix}\hspace{-0.08cm}0\hspace{-0.1cm}\\
\hspace{-0.08cm}p\hspace{-0.1cm}\end{smallmatrix}\right)}}
}\longrightarrow
\left(\hspace{-0.15cm}\begin{array}{c} 0\\P\\\end{array}\hspace{-0.15cm}\right)\rightarrow0$$
with $P$ projective. Since $P$ is projective, there  exists a map $\pi':M\rightarrow B$ such that $\pi'\pi=1_B$.
Hence we have $(\phi-\varphi \pi')\pi=0$. By the universal property of the cokernel, we get a map $\alpha:P\rightarrow TA$ such that $\phi-\varphi \pi'=\alpha p$. Since $\varphi$ is an epimorphism by \prpref{hxyD} and $P$ is projective, there exists a map $\beta:P\rightarrow B$ such that $\alpha=\varphi\beta$. Setting $q=\pi'+\beta p$, we have $$\varphi q=\varphi(\pi'+\beta p)=\varphi \pi'+\varphi\beta p=\varphi \pi'+\alpha p=\phi,$$
$$q\pi= (\pi'+\beta p)\pi=\pi'\pi+\beta p\pi =1_B.$$
Thus $\left(\begin{smallmatrix}\hspace{-0.08cm}1\hspace{-0.1cm}\\
\hspace{-0.08cm}q\hspace{-0.1cm}
\end{smallmatrix}\right)\left(\begin{smallmatrix}\hspace{-0.08cm}1\hspace{-0.1cm}\\ \hspace{-0.08cm}\pi\hspace{-0.1cm}\end{smallmatrix}\right)=1$, and so $\xi$ is split.
\end{proof}

\begin{prp}\label{prp:rightoth}
If $\mathcal{B}$ has enough projective objects, then $$\langle{\bf h}(\mathcal{X}^{\bot},~\mathcal {Y}^{\bot})\rangle={\left(\begin{smallmatrix}\mathcal {X}\\ \mathcal{Y} \end{smallmatrix}\right)^{\bot}}\cap{\left(\begin{smallmatrix}{0}\\ \mathcal{P}\end{smallmatrix}\right)^{\bot}},$$ where $\mathcal{P}$ is the class of projective objects in $\mathcal{B}$.
\end{prp}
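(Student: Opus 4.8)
The plan is to make both sides of the asserted identity completely explicit as classes of triples and then compare them, using \lemref{projective} and \prpref{hxyD} for the right-hand side and a direct analysis of short exact sequences in $(\mathcal{B}\downarrow T)$ for the orthogonality. To rewrite the right-hand side: by \lemref{projective}, $\left(\begin{smallmatrix} 0 \\ \mathcal{P} \end{smallmatrix}\right)^{\bot}=\langle{\bf h}(\mathcal{A},\mathcal{B})\rangle$, and since $\mathcal{A}$ and $\mathcal{B}$ are vacuously closed under extensions, \prpref{hxyD} identifies this with $\mathfrak{D}^{\mathcal{A}}_{\mathcal{B}}$, i.e.\ with the class of all $\left(\begin{smallmatrix} A \\ B \end{smallmatrix}\right)_{\varphi}$ for which $\varphi$ is an epimorphism. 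Likewise $\mathcal{X}^{\bot}$ and $\mathcal{Y}^{\bot}$ are closed under extensions (the right $\operatorname{Ext}^{1}$-orthogonal of any class is), so \prpref{hxyD} gives $\langle{\bf h}(\mathcal{X}^{\bot},\mathcal{Y}^{\bot})\rangle=\mathfrak{D}^{\mathcal{X}^{\bot}}_{\mathcal{Y}^{\bot}}$, the class of $\left(\begin{smallmatrix} A \\ B \end{smallmatrix}\right)_{\varphi}$ with $A\in\mathcal{X}^{\bot}$, $\varphi$ epi and $\ker\varphi\in\mathcal{Y}^{\bot}$. Thus the proposition reduces to showing that, for $\left(\begin{smallmatrix} A \\ B \end{smallmatrix}\right)_{\varphi}$ with $\varphi$ epi, one has $\left(\begin{smallmatrix} A \\ B \end{smallmatrix}\right)_{\varphi}\in\left(\begin{smallmatrix} \mathcal{X} \\ \mathcal{Y} \end{smallmatrix}\right)^{\bot}$ if and only if $A\in\mathcal{X}^{\bot}$ and $\ker\varphi\in\mathcal{Y}^{\bot}$.

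Next I would decouple this orthogonality. For $X\in\mathcal{X}$, $Y\in\mathcal{Y}$ and any structure map $\psi$ there is a short exact sequence $0\to\left(\begin{smallmatrix} X \\ 0 \end{smallmatrix}\right)\to\left(\begin{smallmatrix} X \\ Y \end{smallmatrix}\right)_{\psi}\to\left(\begin{smallmatrix} 0 \\ Y \end{smallmatrix}\right)\to0$ in $(\mathcal{B}\downarrow T)$, while $\left(\begin{smallmatrix} X \\ 0 \end{smallmatrix}\right)$ and $\left(\begin{smallmatrix} 0 \\ Y \end{smallmatrix}\right)$ are retracts of $\left(\begin{smallmatrix} X \\ Y \end{smallmatrix}\right)_{0}\in\left(\begin{smallmatrix} \mathcal{X} \\ \mathcal{Y} \end{smallmatrix}\right)$. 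From the former (via the long exact sequence of $\operatorname{Ext}$-groups in the first argument) and the latter, I obtain that $\left(\begin{smallmatrix} A \\ B \end{smallmatrix}\right)_{\varphi}\in\left(\begin{smallmatrix} \mathcal{X} \\ \mathcal{Y} \end{smallmatrix}\right)^{\bot}$ if and only if $\left(\begin{smallmatrix} A \\ B \end{smallmatrix}\right)_{\varphi}$ is right $\operatorname{Ext}^{1}$-orthogonal to every $\left(\begin{smallmatrix} X \\ 0 \end{smallmatrix}\right)$ with $X\in\mathcal{X}$ and to every $\left(\begin{smallmatrix} 0 \\ Y \end{smallmatrix}\right)$ with $Y\in\mathcal{Y}$ (I assume, as the statement implicitly requires, that $\mathcal{X}$ and $\mathcal{Y}$ are nonempty).

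Then I would evaluate the two pieces separately, along the lines of the proof of \lemref{projective}. An extension $0\to\left(\begin{smallmatrix} A \\ B \end{smallmatrix}\right)_{\varphi}\to E\to\left(\begin{smallmatrix} X \\ 0 \end{smallmatrix}\right)\to0$ necessarily has $E=\left(\begin{smallmatrix} M \\ B \end{smallmatrix}\right)_{\eta}$ with $0\to A\to M\to X\to0$ exact and $\eta$ induced by $\varphi$, and it splits exactly when this last sequence splits in $\mathcal{A}$; since every extension of $X$ by $A$ arises in this way, the first piece amounts precisely to $A\in\mathcal{X}^{\bot}$. An extension $0\to\left(\begin{smallmatrix} A \\ B \end{smallmatrix}\right)_{\varphi}\to E\to\left(\begin{smallmatrix} 0 \\ Y \end{smallmatrix}\right)\to0$ necessarily has $E=\left(\begin{smallmatrix} A \\ N \end{smallmatrix}\right)_{\eta}$ with $0\to B\to N\to Y\to0$ exact and $\eta|_{B}=\varphi$; since $\varphi$ is epi so is $\eta$, and the snake lemma then produces a short exact sequence $0\to\ker\varphi\to\ker\eta\to Y\to0$ in $\mathcal{B}$ through which splittings of the extension correspond bijectively to splittings of this sequence. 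Running the construction in reverse --- from a given $0\to\ker\varphi\to K\to Y\to0$ one builds $N$ as the pushout of $B\hookleftarrow\ker\varphi\to K$ --- shows this correspondence is surjective, so the second piece amounts precisely to $\ker\varphi\in\mathcal{Y}^{\bot}$. Combining this with the reduction above yields the asserted equality.

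The step I expect to be the main obstacle is establishing $\ker\varphi\in\mathcal{Y}^{\bot}$. Here $T$ is only assumed left exact, not $\mathcal{X}$-exact, so the pullback-along-$TM\to TX$ device of \prpref{Xexact} is unavailable, and the assignment $\left(\begin{smallmatrix} A \\ B \end{smallmatrix}\right)_{\varphi}\mapsto\ker\varphi$ is not exact in general. The point will be that the hypothesis that $\varphi$ be an epimorphism --- exactly the information contributed by intersecting with $\left(\begin{smallmatrix} 0 \\ \mathcal{P} \end{smallmatrix}\right)^{\bot}$, via \lemref{projective} --- forces $\operatorname{im}\eta=\operatorname{im}\varphi$ for the object $\left(\begin{smallmatrix} A \\ N \end{smallmatrix}\right)_{\eta}$ above, which is precisely what makes $0\to\ker\varphi\to\ker\eta\to Y\to0$ short exact and lets the snake lemma close the argument.
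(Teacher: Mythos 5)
Your proposal is correct and follows essentially the same route as the paper: membership in $\left(\begin{smallmatrix}0\\ \mathcal{P}\end{smallmatrix}\right)^{\bot}$ is turned into surjectivity of the structure map via \lemref{projective} and \prpref{hxyD}, $A\in\mathcal{X}^{\bot}$ is detected by extensions with constant $\mathcal{B}$-component, and $\ker\varphi\in\mathcal{Y}^{\bot}$ by the pushout along $\ker\varphi\to B$ together with transferring splittings through the induced kernel sequence — exactly the constructions in the paper's two inclusions. The only differences are organizational: you package everything as one biconditional, decoupling $\left(\begin{smallmatrix}\mathcal{X}\\ \mathcal{Y}\end{smallmatrix}\right)^{\bot}$ via the sequence $0\to\left(\begin{smallmatrix}X\\ 0\end{smallmatrix}\right)\to\left(\begin{smallmatrix}X\\ Y\end{smallmatrix}\right)_{\psi}\to\left(\begin{smallmatrix}0\\ Y\end{smallmatrix}\right)\to 0$ and retracts of $\left(\begin{smallmatrix}X\\ Y\end{smallmatrix}\right)_{0}$ (so you only need $\mathcal{X},\mathcal{Y}$ nonempty where the paper implicitly uses $0\in\mathcal{X},\mathcal{Y}$), and you verify orthogonality for all objects of $\mathfrak{D}^{\mathcal{X}^{\bot}}_{\mathcal{Y}^{\bot}}$ directly rather than only for the ${\bf h}$-generators followed by extension closure.
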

\begin{proof}
Let $\mathcal{B}$ have enough projective objects and $\mathcal{P}$ be the class of projective objects in $\mathcal{B}$. At first, we claim that ${\left(\begin{smallmatrix}\mathcal{X}\\ \mathcal{Y} \end{smallmatrix}\right)^{\bot}}\bigcap{\left(\begin{smallmatrix}0\\ \mathcal{P} \end{smallmatrix}\right)^{\bot}}\subseteq\langle{\bf h}({\mathcal {X}^{\bot}},~{\mathcal{Y}^{\bot}})\rangle$.
Assume that $\left(\begin{smallmatrix}A\\B\end{smallmatrix}\right)_\varphi$ is an object in ${\left(\begin{smallmatrix}\mathcal{X}\\ \mathcal{Y} \end{smallmatrix}\right)^{\bot}}\bigcap{\left(\begin{smallmatrix}0\\ \mathcal{P} \end{smallmatrix}\right)^{\bot}}$. It follows from \prpref{hxyD} and \lemref{projective} that $\varphi$ is an epimorphism. Note that both ${^{\bot}\mathcal {X}}$ and ${^{\bot}\mathcal {Y}}$ are closed under extensions, and so it is sufficient to show $A\in{\mathcal {X}^{\bot}}$ and ker$\varphi\in{\mathcal{Y}^{\bot}}$ by \prpref{hxyD}.

Let $\zeta:0\rightarrow A\rightarrow M\rightarrow X\rightarrow0$ be an exact sequence in $\mathcal{A}$ with $X\in\mathcal{X}$. Consequently, there is an exact sequence $\xi:0\rightarrow\left(\begin{smallmatrix}A\\B\end{smallmatrix}\right){\rightarrow} \left(\begin{smallmatrix}M\\B\end{smallmatrix}\right)\rightarrow\left(\begin{smallmatrix}X\\0\end{smallmatrix}\right)\rightarrow0$ in $(\mathcal{B}\downarrow T)$. By hypothesis, the sequence $\xi$ is split since $\left(\begin{smallmatrix}X\\0\end{smallmatrix}\right)\in\left(\begin{smallmatrix}\mathcal {X}\\ \mathcal{Y} \end{smallmatrix}\right)$. So the sequence $\zeta$ is split and $A\in{\mathcal{X}^{\bot}}$, as desired.

Let $0\rightarrow\mbox{ker}\varphi{\rightarrow}M\stackrel{\tau}{\rightarrow}Y\rightarrow0$ be an exact sequence in $\mathcal{B}$ with $Y\in\mathcal{Y}$.
Then we have a pushout diagram

$$\xymatrix{&0\ar[d]&0\ar[d]\\
0\ar[r]&\rm{ker}\varphi\ar[r]\ar[d]^{k}&M\ar[r]^{\tau}\ar[d]^{k'}&Y\ar[r]\ar@{=}[d]&0\\
0\ar[r]&B\ar[r]^{\pi}\ar[d]^\varphi&L\ar[d]^{\varphi '}\ar[r]^{\tau '}&Y\ar[r]&0\\
&TA\ar@{=}[r]\ar[d]&TA\ar[d] &\\
&0&0&\\}$$
which induces an exact sequence
$$\xi':0\rightarrow\left(\hspace{-0.15cm}\begin{array}{c} A\\B\\\end{array}\hspace{-0.15cm}\right)_{\varphi}
\overset{\left(\begin{smallmatrix}\hspace{-0.08cm}1\hspace{-0.1cm}\\ \hspace{-0.08cm}\pi\hspace{-0.1cm}\end{smallmatrix}\right)}{\longrightarrow}
\left(\hspace{-0.15cm}\begin{array}{c} A\\L\\\end{array}\hspace{-0.15cm}\right)_{\widetilde{\varphi}}
\overset{\left(\begin{smallmatrix}\hspace{-0.08cm}0\hspace{-0.1cm}
\\\hspace{-0.08cm}{\tau'}\hspace{-0.1cm}\end{smallmatrix}\right)}{\longrightarrow}
\left(\hspace{-0.15cm}\begin{array}{c} 0\\Y\\\end{array}\hspace{-0.15cm}\right)\rightarrow0$$
in $(\mathcal{B}\hspace{-0.1cm}\downarrow \hspace{-0.1cm}T)$. Since $\xi'$ is split by hypothesis, there exists $\varsigma:Y\rightarrow L$ such that $\tau'\varsigma=1$ and $\varphi'\varsigma=0$. By the universal property of the kernel, there is a morphism ${g}:Y\rightarrow M$ such that $\varsigma=k'g $. Thus we have $\tau g=\tau'k'g=\tau'\varsigma=1$ which means the first row in the above diagram is split. So ker$\varphi\in{\mathcal{Y}^{\bot}}$, as required.

Nextly, we claim that $\langle{\bf h}({\mathcal{X}^{\bot}},~{\mathcal{Y}^{\bot}})\rangle\subseteq{\left(\begin{smallmatrix}\mathcal{X}\\ \mathcal{Y} \end{smallmatrix}\right)^{\bot}}\bigcap{\left(\begin{smallmatrix}0\\ \mathcal{P} \end{smallmatrix}\right)^{\bot}}$. By \rmkref{functorH} and \lemref{projective}, we only need to show that ${\bf h}(^{\bot}\mathcal {X},~0)\subseteq{\left(\begin{smallmatrix}\mathcal {X}\\ \mathcal{Y} \end{smallmatrix}\right)^{\bot}}$ and ${\bf h}(0,~^{\bot}\mathcal {Y})\subseteq{\left(\begin{smallmatrix}\mathcal {X}\\ \mathcal{Y} \end{smallmatrix}\right)^{\bot}}$.
Let $A\in{\mathcal{X}^{\bot}}$ and $\varepsilon:0\rightarrow\left(\begin{smallmatrix}A\\TA \end{smallmatrix}\right)\overset{\left(\begin{smallmatrix}\hspace{-0.08cm}m
\hspace{-0.1cm}\\ \hspace{-0.08cm}n\hspace{-0.1cm}\end{smallmatrix}\right)}{\longrightarrow}\left(\begin{smallmatrix}M\\N\end{smallmatrix}\right)_\phi{\rightarrow}
\left(\begin{smallmatrix}X\\Y\end{smallmatrix}\right)\rightarrow0$ be an exact sequence in $(\mathcal{B}\downarrow T)$ with $X\in{\mathcal{X}}$ and $Y\in{\mathcal{Y}}$. Note that the exact sequence $0\rightarrow A\rightarrow M\rightarrow X\rightarrow0$ in $\mathcal{A}$ is split. Thus there exists $m':M\rightarrow A$ such that $m'm=1_A$ and $(Tm')\phi n=(Tm')Tm=1_{TX}$. It follows that the sequence $\varepsilon$ is split. So ${\bf h}(^{\bot}\mathcal {X},~0)\subseteq{\left(\begin{smallmatrix}\mathcal {X}\\ \mathcal{Y} \end{smallmatrix}\right)^{\bot}}$. Similarly, one can show that $ {\bf h}(0,~^{\bot}\mathcal{Y})\subseteq{\left(\begin{smallmatrix}\mathcal {X}\\ \mathcal{Y} \end{smallmatrix}\right)^{\bot}}$.
\end{proof}

\begin{cor}
Let $\mathcal{A}$ and $\mathcal{B}$ be abelian categories with enough projective objects and enough injective objects. If $T:\mathcal{A}\rightarrow\mathcal{B}$ is an exact functor, then $\langle{\bf h}(\mathcal{A},~\mathcal{B})\rangle$ is a Frobenius category if and only if $\mathcal{A}$ and $\mathcal{B}$ are Frobenius and $T$ preserves injective objects.
\end{cor}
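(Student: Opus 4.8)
The plan is to regard $\mathcal{E}:=\langle{\bf h}(\mathcal{A},\mathcal{B})\rangle$ as an exact category and to determine its relatively projective and relatively injective objects; the Frobenius property then becomes a direct comparison of the two classes. Since $\mathcal{A}$ and $\mathcal{B}$ are trivially closed under extensions, \prpref{hxyD} gives $\mathcal{E}=\mathfrak{D}^{\mathcal{A}}_{\mathcal{B}}=\{\left(\begin{smallmatrix}A\\B\end{smallmatrix}\right)_{\varphi}\mid\varphi\ \text{epic}\}$ and that this class is closed under extensions, so we may equip it with the conflations inherited from the abelian category $(\mathcal{B}\downarrow T)$. Write ${\bf q}_{1},{\bf q}_{2}$ for the two components of the functor ${\bf q}$ of \rmkref{functorH}, so ${\bf q}_{1}\left(\begin{smallmatrix}A\\B\end{smallmatrix}\right)_{\varphi}=A$ and ${\bf q}_{2}\left(\begin{smallmatrix}A\\B\end{smallmatrix}\right)_{\varphi}=B$ (both exact), and let ${\bf k}\colon\mathcal{E}\to\mathcal{B}$, $\left(\begin{smallmatrix}A\\B\end{smallmatrix}\right)_{\varphi}\mapsto\ker\varphi$; a snake-lemma argument, using only that $T$ is left exact and that every structure map in $\mathcal{E}$ is epic, shows ${\bf k}$ is exact.

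Next I would establish: the relatively injective objects of $\mathcal{E}$ are exactly the direct summands of the ${\bf h}(E,F)$ with $E$ injective in $\mathcal{A}$ and $F$ injective in $\mathcal{B}$, and the relatively projective objects of $\mathcal{E}$ are exactly the direct summands of the $\left(\begin{smallmatrix}P\\Q\end{smallmatrix}\right)_{\psi}$ with $P$ projective in $\mathcal{A}$, $Q$ projective in $\mathcal{B}$ and $\psi\colon Q\to TP$ epic. On the injective side, each ${\bf h}(E,F)$ lies in $\mathcal{E}$ and is injective in $(\mathcal{B}\downarrow T)$ by \rmkref{functorH}, hence relatively injective in $\mathcal{E}$; and for $\left(\begin{smallmatrix}A\\B\end{smallmatrix}\right)_{\varphi}\in\mathcal{E}$, injective embeddings $A\hookrightarrow E$, $B\hookrightarrow F$ yield an admissible monomorphism $\left(\begin{smallmatrix}A\\B\end{smallmatrix}\right)_{\varphi}\hookrightarrow{\bf h}(E,F)$ whose cokernel again lies in $\mathcal{E}$ (here $T$ exact is used, so that the induced structure map stays epic), so $\mathcal{E}$ has enough relative injectives. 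On the projective side, relative projectivity of $\left(\begin{smallmatrix}P\\Q\end{smallmatrix}\right)_{\psi}$ follows from the short exact sequence of abelian groups, natural in $X\in\mathcal{E}$,
\[
0\to\Hom{\mathcal{B}}{Q}{{\bf k}X}\to\Hom{(\mathcal{B}\downarrow T)}{\left(\begin{smallmatrix}P\\Q\end{smallmatrix}\right)_{\psi}}{X}\to\Hom{\mathcal{A}}{P}{{\bf q}_{1}X}\to 0
\]
(surjectivity on the right uses $Q$ projective and the structure map of $X$ epic) together with the exactness of ${\bf q}_{1}$ and ${\bf k}$ and the five lemma; and enough such objects exist: given $\left(\begin{smallmatrix}A\\B\end{smallmatrix}\right)_{\varphi}\in\mathcal{E}$, take an epimorphism $P\to A$ with $P$ projective, form the pullback $E_{0}=B\times_{TA}TP$ (the map $TP\to TA$ is epic since $T$ is exact), take an epimorphism $Q\to E_{0}$ with $Q$ projective, and set $\psi\colon Q\to E_{0}\to TP$; a short diagram chase shows $\left(\begin{smallmatrix}P\\Q\end{smallmatrix}\right)_{\psi}\to\left(\begin{smallmatrix}A\\B\end{smallmatrix}\right)_{\varphi}$ is an admissible epimorphism in $\mathcal{E}$ with kernel again in $\mathcal{E}$.

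Granting these descriptions, the equivalence is immediate. If $\mathcal{A},\mathcal{B}$ are Frobenius and $T$ preserves injectives, then for $P,Q$ projective one has $TP$ projective, so $\psi\colon Q\to TP$ splits, $Q\cong TP\oplus\ker\psi$ with $\ker\psi$ injective, whence $\left(\begin{smallmatrix}P\\Q\end{smallmatrix}\right)_{\psi}\cong{\bf h}(P,\ker\psi)$ is relatively injective; symmetrically ${\bf h}(E,F)$ has $F\oplus TE$ projective, so it has the form $\left(\begin{smallmatrix}P\\Q\end{smallmatrix}\right)_{\psi}$ and is relatively projective — so the two classes coincide and $\mathcal{E}$ is Frobenius. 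Conversely, assume $\mathcal{E}$ is Frobenius. Applying ${\bf q}_{1}$ to a splitting exhibiting ${\bf h}(E,0)$ ($E$ injective) — which is relatively injective, hence relatively projective, hence a summand of some $\left(\begin{smallmatrix}P\\Q\end{smallmatrix}\right)_{\psi}$ — shows $E$ is a summand of the projective $P$; applying ${\bf q}_{1}$ instead to $\left(\begin{smallmatrix}P\\Q\end{smallmatrix}\right)_{\psi}$ (for $P$ projective, $Q\to TP$ epic), which is relatively projective, hence relatively injective, hence a summand of some ${\bf h}(E,F)$, shows every projective of $\mathcal{A}$ is a summand of an injective; so $\mathcal{A}$ is Frobenius. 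The same device with ${\bf q}_{2}$ applied to ${\bf h}(0,F)$ and with ${\bf k}$ applied to $\left(\begin{smallmatrix}0\\Q_{0}\end{smallmatrix}\right)_{0}$ ($Q_{0}$ projective) shows $\mathcal{B}$ is Frobenius; and ${\bf q}_{2}$ applied to ${\bf h}(E,0)=\left(\begin{smallmatrix}E\\TE\end{smallmatrix}\right)_{\mathrm{id}}$ shows $TE$ is a summand of a projective $=$ injective object of $\mathcal{B}$, i.e. $T$ preserves injectives.

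The main obstacle is the projective side: unlike injectives, one cannot transport projectives through ${\bf h}$, so both pinning down the relative projectives of $\mathcal{E}$ and, above all, showing there are enough of them rely on the pullback construction above together with a careful verification that the kernel of $\left(\begin{smallmatrix}P\\Q\end{smallmatrix}\right)_{\psi}\to\left(\begin{smallmatrix}A\\B\end{smallmatrix}\right)_{\varphi}$ still has epic structure map. This verification, and the cokernel computation on the injective side, are exactly the points where exactness of $T$ (not merely left exactness) is indispensable.
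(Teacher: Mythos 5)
Your argument is correct, and its skeleton is the one the paper uses: realize $\langle{\bf h}(\mathcal{A},\mathcal{B})\rangle=\mathfrak{D}^{\mathcal{A}}_{\mathcal{B}}$ as an extension-closed exact subcategory of $(\mathcal{B}\downarrow T)$, identify its relative injectives as (summands of) the ${\bf h}(E,F)$ with $E,F$ injective and its relative projectives as (summands of) the $\left(\begin{smallmatrix}P\\Q\end{smallmatrix}\right)_{\psi}$ with $P,Q$ projective and $\psi$ epic, and then translate the Frobenius condition. The execution differs, though. The paper gets the injective side by Ext-orthogonality bookkeeping: combining \lemref{projective} and \prpref{rightoth} it computes $\langle{\bf h}(\mathcal{A},\mathcal{B})\rangle^{\bot}=\langle{\bf h}(\mathcal{I},\mathcal{J})\rangle$, simply asserts that the relative projectives are $\left(\begin{smallmatrix}\mathcal{P}\\\mathcal{Q}\end{smallmatrix}\right)\cap\langle{\bf h}(\mathcal{A},\mathcal{B})\rangle$, and then states the equivalence ``Frobenius iff $\mathcal{I}=\mathcal{P}$, $\mathcal{J}=\mathcal{Q}$ and $T(\mathcal{I})\subseteq\mathcal{J}$'' without further detail. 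You instead prove everything directly: relative injectivity of ${\bf h}(E,F)$ from \rmkref{functorH}, enough relative injectives via the embedding into ${\bf h}(E,F)$ with cokernel still in the class (using exactness of $T$), relative projectivity of $\left(\begin{smallmatrix}P\\Q\end{smallmatrix}\right)_{\psi}$ via the natural sequence $0\to\Hom{\mathcal{B}}{Q}{{\bf k}X}\to\Hom{(\mathcal{B}\downarrow T)}{\left(\begin{smallmatrix}P\\Q\end{smallmatrix}\right)_{\psi}}{X}\to\Hom{\mathcal{A}}{P}{{\bf q}_{1}X}\to 0$ together with exactness of ${\bf q}_1$ and ${\bf k}$, enough relative projectives via the pullback $B\times_{TA}TP$ (checking the kernel stays in the class), and the ``only if'' direction by pushing the coincidence of the two classes through ${\bf q}_1$, ${\bf q}_2$ and ${\bf k}$ — the use of ${\bf k}$ on $\left(\begin{smallmatrix}0\\Q_0\end{smallmatrix}\right)$ to see that projectives of $\mathcal{B}$ are injective is a nice touch. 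What the paper's route buys is brevity and reuse of its earlier orthogonality results; what yours buys is a self-contained argument that actually supplies the points the paper glosses over (enough relative projectives and injectives in the exact category, the identification of the relative projectives, and the deduction of the converse), and it makes explicit exactly where full exactness of $T$, rather than left exactness, is needed.
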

\begin{proof}
Let $\mathcal{P}$ (\textrm{resp.} $\mathcal{Q}$) be the class of projective objects in $\mathcal{A}$ (resp. $\mathcal{B}$) and $\mathcal{I}$ (\textrm{resp.} $\mathcal{J}$) the class of projective objects in $\mathcal{A}$ (resp. $\mathcal{B}$).

Then one has
$$\begin{array}{lll}{{\bf h}(\mathcal {A},~0)^{\bot}}\bigcap{\left(\begin{smallmatrix}0\\ \mathcal{P}  \end{smallmatrix}\right)^{\bot}}&={\left(\begin{smallmatrix}\mathcal {A}\\T\mathcal {A} \end{smallmatrix}\right)^{\bot}}\bigcap{\left(\begin{smallmatrix}0\\ \mathcal {P}\end{smallmatrix}\right)^{\bot}}&\\&=\langle{\bf h}({\mathcal{A}^{\bot}},~({T\mathcal{A})^{\bot}})\rangle&\\&=\langle{\bf h}(\mathcal{I},~({T\mathcal{A})^{\bot}})\rangle,\end{array}$$
$$\begin{array}{lll}{{\bf h}(0,~\mathcal{B})^{\bot}}\bigcap{\left(\begin{smallmatrix}0\\ \mathcal{P} \end{smallmatrix}\right)^{\bot}}&={\left(\begin{smallmatrix}0\\ \mathcal{B} \end{smallmatrix}\right)^{\bot}}\bigcap{}\left(\begin{smallmatrix}0\\ \mathcal{P} \end{smallmatrix}\right)^{\bot}&\\&=\langle{\bf h}({0^{\bot}},~{\mathcal{B}^{\bot}})\rangle&\\&=\langle{\bf h}(\mathcal{A},~\mathcal{J})\rangle.\end{array}$$

Thus one gets
$$\begin{array}{lll}{\langle{\bf h}(\mathcal{A},~\mathcal{B})\rangle^{\bot}}&={{\bf h}(\mathcal{A},~\mathcal{B})^{\bot}}={{\bf h}(\mathcal{A},~0)^{\bot}}\bigcap{{\bf h}(0,~\mathcal{B})^{\bot}}&\\&={{\bf h}(\mathcal{A},~0)^{\bot}}\bigcap{^{\bot}\left(\begin{smallmatrix}0\\ \mathcal {P} \end{smallmatrix}\right)}\bigcap{{\bf h}(0,~\mathcal{B})^{\bot}}&\\&=\left({{\bf h}(\mathcal {A},~0)^{\bot}}\bigcap{^{\bot}\left(\begin{smallmatrix}0\\ \mathcal{P} \end{smallmatrix}\right)}\right)\bigcap\left({{\bf h}(0,~\mathcal {B})^{\bot}}\bigcap{^{\bot}\left(\begin{smallmatrix}0\\ \mathcal{P}\end{smallmatrix}\right)}\right)\\&=\langle{\bf h}(\mathcal{I},~T\mathcal{A}^{\bot})\rangle\bigcap\langle{\bf h}({\mathcal{A}},~\mathcal{J})\rangle&\\&=\langle{\bf h}(\mathcal{I},~\mathcal{J})\rangle.\end{array}$$

Moreover, the class of projective objects in the exact category $\langle{\bf h}(\mathcal{A},~\mathcal{B})\rangle$ is $\left(\begin{smallmatrix}\mathcal {P}\\ \mathcal{Q}\end{smallmatrix}\right)\cap\langle{\bf h}(\mathcal{A},~\mathcal{B})\rangle$.
Therefore, $\langle{\bf h}(\mathcal{A},~\mathcal{B})\rangle$ is Frobenius if and only if $T(\mathcal{I})\subset\mathcal{Q}=\mathcal{J}$ and $\mathcal{I}=\mathcal{P}$, as desired.
\end{proof}

\section{Complete Cotorsion pairs}

\noindent In this section, we characterize when complete hereditary cotorsion pairs in abelian categories $\mathcal{A}$ and $\mathcal{B}$ can induce complete hereditary cotorsion pairs in $(\mathcal{B}\hspace{-0.05cm}\downarrow\hspace{-0.05cm}T)$.

\begin{lem}\label{orth5}
The following hold for a comma category $(\mathcal{B}\downarrow T)$:
 \begin{enumerate}
\item  $\left(\begin{smallmatrix}\mathcal{X}\\ \mathcal{Y}\end{smallmatrix}\right)$ is resolving in $(\mathcal{B}\downarrow T)$ if and only if $\mathcal {X}$ and $\mathcal{Y}$ are resolving in $\mathcal{A}$ and $\mathcal{B}$ respectively.
\item  If $T:\mathcal{A}\rightarrow\mathcal{B}$ is $\mathcal{X}$-exact and $\mathcal{X}$, $\mathcal{Y}$ are closed under extensions in $\mathcal{A}$ and $\mathcal{B}$, respectively, then $\langle{\bf h}(\mathcal {X},~\mathcal {Y})\rangle$ is coresolving in $(\mathcal{B}\downarrow T)$ if and only if $\mathcal {X}$ and $\mathcal{Y}$ are coresolving in $\mathcal{A}$ and $\mathcal{B}$, respectively.

   \end{enumerate}
\end{lem}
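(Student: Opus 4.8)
The plan is to do everything componentwise. The key structural fact — already used implicitly in the proof of \prpref{hxyD} — is that a sequence $0\to\left(\begin{smallmatrix}A_1\\B_1\end{smallmatrix}\right)\to\left(\begin{smallmatrix}A_2\\B_2\end{smallmatrix}\right)\to\left(\begin{smallmatrix}A_3\\B_3\end{smallmatrix}\right)\to 0$ in $(\mathcal{B}\downarrow T)$ is exact precisely when $0\to A_1\to A_2\to A_3\to 0$ and $0\to B_1\to B_2\to B_3\to 0$ are exact in $\mathcal{A}$ and $\mathcal{B}$; in particular the forgetful functor $\mathbf{q}$ and the embeddings $A\mapsto\left(\begin{smallmatrix}A\\0\end{smallmatrix}\right)_{0}$ and $B\mapsto\left(\begin{smallmatrix}0\\B\end{smallmatrix}\right)_{0}$ (zero structure maps) are exact, and I shall read the closure properties off along these.

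For part (1), assume first that $\mathcal{X}$ and $\mathcal{Y}$ are resolving. Given a short exact sequence in $(\mathcal{B}\downarrow T)$ with the relevant terms in $\left(\begin{smallmatrix}\mathcal{X}\\\mathcal{Y}\end{smallmatrix}\right)$, the two component sequences inherit the corresponding hypotheses, and closure of $\mathcal{X}$ and $\mathcal{Y}$ under extensions, kernels of epimorphisms and direct summands returns the missing term to $\left(\begin{smallmatrix}\mathcal{X}\\\mathcal{Y}\end{smallmatrix}\right)$ (and $\left(\begin{smallmatrix}P\\0\end{smallmatrix}\right)_{0}$ is projective in $(\mathcal{B}\downarrow T)$ for $P$ projective, which handles the containment-of-projectives clause). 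Conversely, if $\left(\begin{smallmatrix}\mathcal{X}\\\mathcal{Y}\end{smallmatrix}\right)$ is resolving, push a short exact sequence of $\mathcal{A}$ through $A\mapsto\left(\begin{smallmatrix}A\\0\end{smallmatrix}\right)_{0}$ and read off the first components of the conclusion to see $\mathcal{X}$ is resolving, and push a short exact sequence of $\mathcal{B}$ through $B\mapsto\left(\begin{smallmatrix}0\\B\end{smallmatrix}\right)_{0}$ and read off second components to see $\mathcal{Y}$ is resolving; here one uses that $0$ lies in $\mathcal{X}$ and in $\mathcal{Y}$, so the auxiliary objects really do belong to $\left(\begin{smallmatrix}\mathcal{X}\\\mathcal{Y}\end{smallmatrix}\right)$.

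For part (2), begin by invoking \prpref{hxyD}: under the standing hypotheses $\langle\mathbf{h}(\mathcal{X},\mathcal{Y})\rangle=\mathfrak{D}^{\mathcal{X}}_{\mathcal{Y}}$, the class of $\left(\begin{smallmatrix}X\\B\end{smallmatrix}\right)_{\varphi}$ ($B$ the $\mathcal{B}$-component) with $X\in\mathcal{X}$, $\varphi$ epic and $\ker\varphi\in\mathcal{Y}$, and this class is closed under extensions and, one checks directly, under direct summands. By \rmkref{functorH}, since every object of $(\mathcal{B}\downarrow T)$ embeds into some $\mathbf{h}(E,E')$ and $\mathbf{h}$ preserves injectives, every injective is a summand of an $\mathbf{h}(E,E')=\left(\begin{smallmatrix}E\\E'\oplus TE\end{smallmatrix}\right)$, which lies in $\mathfrak{D}^{\mathcal{X}}_{\mathcal{Y}}$ once $\mathcal{X},\mathcal{Y}$ contain the injectives of $\mathcal{A},\mathcal{B}$; so for the forward implication only closure under cokernels of monomorphisms is at stake. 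Given an exact sequence $0\to\left(\begin{smallmatrix}X_1\\B_1\end{smallmatrix}\right)_{\varphi_1}\to\left(\begin{smallmatrix}X_2\\B_2\end{smallmatrix}\right)_{\varphi_2}\to\left(\begin{smallmatrix}X_3\\B_3\end{smallmatrix}\right)_{\varphi_3}\to 0$ with the first two terms in $\mathfrak{D}^{\mathcal{X}}_{\mathcal{Y}}$ and $\mathcal{X},\mathcal{Y}$ coresolving, the first components give $0\to X_1\to X_2\to X_3\to 0$ with $X_1,X_2\in\mathcal{X}$, so $X_3\in\mathcal{X}$, and $\mathcal{X}$-exactness of $T$ together with $X_1\in\mathcal{X}$ makes $0\to TX_1\to TX_2\to TX_3\to 0$ exact; applying the snake lemma to the diagram with rows $0\to B_1\to B_2\to B_3\to 0$ and $0\to TX_1\to TX_2\to TX_3\to 0$ and verticals $\varphi_1,\varphi_2,\varphi_3$, the vanishing of the cokernels of $\varphi_1$ and $\varphi_2$ forces $\varphi_3$ to be epic and $0\to\ker\varphi_1\to\ker\varphi_2\to\ker\varphi_3\to 0$ exact, whence $\ker\varphi_3\in\mathcal{Y}$ and $\left(\begin{smallmatrix}X_3\\B_3\end{smallmatrix}\right)_{\varphi_3}\in\mathfrak{D}^{\mathcal{X}}_{\mathcal{Y}}$. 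For the backward implication, $\mathbf{h}(E,0)=\left(\begin{smallmatrix}E\\TE\end{smallmatrix}\right)_{\mathrm{id}}$ and $\mathbf{h}(0,E')=\left(\begin{smallmatrix}0\\E'\end{smallmatrix}\right)_{0}$ are injective, hence in $\mathfrak{D}^{\mathcal{X}}_{\mathcal{Y}}$, which gives $E\in\mathcal{X}$ and $E'\in\mathcal{Y}$; and pushing short exact sequences $0\to X_1\to X_2\to X_3\to 0$ of $\mathcal{A}$ with $X_1,X_2\in\mathcal{X}$ through $X\mapsto\left(\begin{smallmatrix}X\\TX\end{smallmatrix}\right)_{\mathrm{id}}$ (exact on such sequences by $\mathcal{X}$-exactness, with kernel $0\in\mathcal{Y}$), and short exact sequences of $\mathcal{B}$ with first two terms in $\mathcal{Y}$ through $B\mapsto\left(\begin{smallmatrix}0\\B\end{smallmatrix}\right)_{0}$, then using closure of $\mathfrak{D}^{\mathcal{X}}_{\mathcal{Y}}$ under cokernels of monomorphisms, yields that $\mathcal{X}$ and $\mathcal{Y}$ are closed under cokernels of monomorphisms.

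I expect the snake-lemma step in part (2) to be the main obstacle, and it is exactly there that $\mathcal{X}$-exactness of $T$ is indispensable: one must know not only that the kernels of the structure maps assemble into a short exact sequence but also that $\varphi_3$ is automatically an epimorphism — so that the cokernel term genuinely lies in $\mathfrak{D}^{\mathcal{X}}_{\mathcal{Y}}$ — and for the latter the exactness of $0\to TX_1\to TX_2\to TX_3\to 0$ \emph{on the right} is what makes the relevant connecting map vanish; absent it the argument collapses. The other essential ingredient is \prpref{hxyD}, which replaces the abstract extension closure $\langle\mathbf{h}(\mathcal{X},\mathcal{Y})\rangle$ by the explicit class $\mathfrak{D}^{\mathcal{X}}_{\mathcal{Y}}$ on which this componentwise bookkeeping can be run.
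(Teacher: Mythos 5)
Your part (2) is essentially the paper's own argument: after invoking \prpref{hxyD} to replace $\langle{\bf h}(\mathcal{X},\mathcal{Y})\rangle$ by $\mathfrak{D}^{\mathcal{X}}_{\mathcal{Y}}$, the forward direction is the same componentwise diagram chase comparing $0\to B_1\to B_2\to B_3\to 0$ with $0\to TX_1\to TX_2\to TX_3\to 0$ (with $\mathcal{X}$-exactness supplying exactness of the bottom row, hence surjectivity of $\varphi_3$ and the short exact sequence of kernels), and the converse uses the same test objects $\left(\begin{smallmatrix}X\\TX\end{smallmatrix}\right)$ and $\left(\begin{smallmatrix}0\\B\end{smallmatrix}\right)$. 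One step, however, is not justified: to get the injective objects of $(\mathcal{B}\downarrow T)$ into $\mathfrak{D}^{\mathcal{X}}_{\mathcal{Y}}$ you say injectives are summands of objects ${\bf h}(E,E')$ and that $\mathfrak{D}^{\mathcal{X}}_{\mathcal{Y}}$ is ``closed under direct summands, one checks directly''. That check would require $\mathcal{X}$ and $\mathcal{Y}$ themselves to be closed under direct summands, which is not among the hypotheses (extension-closed, even coresolving, classes need not be summand-closed), so the claim is unsupported as written. The repair is to avoid summands: when $\mathcal{A}$ and $\mathcal{B}$ have enough injectives, an object $\left(\begin{smallmatrix}A\\B\end{smallmatrix}\right)_{\varphi}$ is injective in $(\mathcal{B}\downarrow T)$ exactly when $A$ is injective, $\varphi$ is an epimorphism and $\ker\varphi$ is injective \cite{MR389981}, i.e.\ the injectives are precisely the ${\bf h}(I,E)$ with $I,E$ injective, and these lie in $\mathfrak{D}^{\mathcal{X}}_{\mathcal{Y}}$ as soon as $\mathcal{X}$, $\mathcal{Y}$ contain injectives. (The paper's proof skips this clause altogether, so with this repair your treatment is the more complete one.)

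In part (1) the parenthetical ``$\left(\begin{smallmatrix}P\\0\end{smallmatrix}\right)_{0}$ is projective in $(\mathcal{B}\downarrow T)$, which handles the containment-of-projectives clause'' does not do what it claims. For the implication ``$\mathcal{X},\mathcal{Y}$ resolving $\Rightarrow\left(\begin{smallmatrix}\mathcal{X}\\\mathcal{Y}\end{smallmatrix}\right)$ resolving'' one must show that \emph{every} projective object of $(\mathcal{B}\downarrow T)$ has components in $\mathcal{X}$ and $\mathcal{Y}$; exhibiting some projectives is irrelevant to that, and the clause genuinely fails under the standard definition: for $T=\operatorname{Hom}_R(M,-)$, so that $(\mathcal{B}\downarrow T)$ is the module category of $\left(\begin{smallmatrix}S&0\\M&R\end{smallmatrix}\right)$, the object with $\mathcal{B}$-component a projective $Q$ and $\mathcal{A}$-component $M\otimes_S Q$ (unit as structure map) is projective in the comma category, yet $M\otimes_S Q$ need not lie in a resolving $\mathcal{X}$ (take $\mathcal{X}$ the projectives and $M$ non-projective). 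In the converse direction your object $\left(\begin{smallmatrix}0\\Q\end{smallmatrix}\right)_{0}$ is in general \emph{not} projective, because morphisms from it to $\left(\begin{smallmatrix}A\\B\end{smallmatrix}\right)_{\varphi}$ are morphisms $Q\to\ker\varphi$ and $\ker\varphi$ is only a left exact function of the object; so the ``projectives of $\mathcal{B}$ lie in $\mathcal{Y}$'' half is also unproved. Your componentwise argument does correctly establish the closure statements (extensions and kernels of epimorphisms), which is the reading under which the paper's unproved part (1) is tenable; you should adopt that reading explicitly rather than assert that the projective-containment clause has been handled.
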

\begin{proof}  The proof of (1) is straightforward.

(2) Assume that $T$ is $\mathcal{X}$-exact. For the ``only if" part, we assume that $\langle{\bf h}(\mathcal {X},~\mathcal {Y})\rangle$ is coresolving in $(\mathcal{B}\downarrow T)$. Let $0\rightarrow X'\rightarrow X\rightarrow X''\rightarrow0$ be an exact sequence in $\mathcal{A}$ with $X'$ $\in$ $\mathcal{X}$. It follows that the sequence $0\rightarrow{\bf h}(X',~0)\rightarrow{\bf h}(X,~0)\rightarrow{\bf h}(X'',~0)\rightarrow0 $ is exact in $(\mathcal{B}\downarrow T)$. Thus we get that ${\bf h}(X'',~0)\in\langle{\bf h}(\mathcal{X},~\mathcal{Y})\rangle$ if and only if  ${\bf h}(X',~0)\in\langle{\bf h}(\mathcal{X},~\mathcal{Y})\rangle$ since$\langle{\bf h}(\mathcal {X},~\mathcal {Y})\rangle$ is coresolving. Hence $X\in{\mathcal{X}}$ if and only if $X''\in{\mathcal{X}}$. So $\mathcal{X}$ is coresolving in $\mathcal{A}$. Similarly, one can show that $\mathcal{Y}$ is coresolving in $\mathcal{B}$.

For the ``if" part, let  $\mathcal {X}$ and $\mathcal {Y}$  coresolving in $\mathcal{A}$ and $\mathcal{B}$, respectively. Then $\mathcal{X}$ and $\mathcal {Y}$ are closed under extensions in $\mathcal{A}$ and $\mathcal{B}$ respectively. Let $0\rightarrow\left(\begin{smallmatrix}X'\\Y' \end{smallmatrix}\right)_{\varphi'}\rightarrow \left(\begin{smallmatrix}X\\Y\end{smallmatrix}\right)_{\varphi}\rightarrow\left(\begin{smallmatrix}X''\\Y''\end{smallmatrix}\right)_{\varphi''}\rightarrow0 $ be an exact sequence in $(\mathcal{B}\downarrow T)$ with $\left(\begin{smallmatrix}X'\\Y' \end{smallmatrix}\right)_{\varphi'}\in\langle{\bf h}(\mathcal{X},~\mathcal{Y})\rangle$. Note that $T$ is $\mathcal {X}$-exact, so we have the following commutative diagram of exact sequences:
$$\xymatrix{0\ar[r]&Y'\ar[r]\ar[d]^{\varphi'}&Y\ar[r]\ar[d]^\varphi&Y''\ar[r]\ar[d]^{\varphi''}&0\\
    0\ar[r]&TX' \ar[r]&TX\ar[r]&TX'' \ar[r]&0.}$$
Note that $\varphi'$ is an epimorphism and $\mbox{ker}\varphi'\in \mathcal{Y}$ by \prpref{hxyD}. Thus we have that $\mbox{coker}\varphi\cong{\mbox{coker}}\varphi''$ and
$0\rightarrow\mbox{ker}\varphi'\rightarrow\mbox{ker}\varphi\rightarrow\mbox{ker}\varphi''\rightarrow0$ is exact in $\mathcal{B}$. Therefore, we get that $\varphi$ is surjective if and only if $\varphi''$ is surjective. Since $\mathcal{Y}$ is coresolving in $\mathcal{B}$ by hypothesis, $\mbox{ker}\varphi\in \mathcal {Y}$ if and only if $\mbox{ker}\varphi''\in \mathcal {Y}$. Note that $0\rightarrow X' \rightarrow X\rightarrow X''\rightarrow0$ is an exact sequence in $\mathcal{A}$ and $\mathcal{X}$ is coresolving in $\mathcal{A}$. It follows that $X\in\mathcal{X}$ if and only if $X''\in\mathcal{X}$. So  $\left(\begin{smallmatrix}X\\Y \end{smallmatrix}\right)_{\varphi}\in\langle{\bf h}(\mathcal{X},~\mathcal{Y})\rangle$ if and only if $\left(\begin{smallmatrix}X''\\Y'' \end{smallmatrix}\right)_{\varphi'}\in\langle{\bf h}(\mathcal{X},~\mathcal{Y})\rangle$ by \prpref{hxyD} .
\end{proof}

\begin{thm}
Let $\mathcal{A}$ and $\mathcal{B}$ both have enough projective objects and enough injective objects.
If $T:\mathcal{A}\rightarrow\mathcal{B}$ is $\mathcal{X}$-exact, then $(^{\bot}\mathcal{X},~\mathcal{X})$ is a (hereditary) cotorsion pair in $\mathcal{A}$ and $(^{\bot}\mathcal{Y},~\mathcal{Y})$) is a (hereditary) cotorsion pair in $\mathcal{B}$ if and only if $((\begin{smallmatrix}^{\bot}\mathcal{X}\\^{\bot}\mathcal{Y}\end{smallmatrix}),\langle{\bf h}(\mathcal{X},~\mathcal{Y})\rangle)$ is a (hereditary) cotorsion pair in $(\mathcal{B}\downarrow T)$ and $\mathcal{X}$ and $\mathcal{Y}$ are closed under extensions.
\end{thm}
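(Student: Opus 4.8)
The plan is to prove both implications by leveraging the structural results already established: Proposition~\ref{prp:Xexact} (which identifies ${}^{\bot}\langle{\bf h}(\mathcal{X},\mathcal{Y})\rangle$ with $\left(\begin{smallmatrix}{}^{\bot}\mathcal{X}\\{}^{\bot}\mathcal{Y}\end{smallmatrix}\right)$), Proposition~\ref{prp:hxyD} (the extension-closure description of $\langle{\bf h}(\mathcal{X},\mathcal{Y})\rangle=\mathfrak{D}^{\mathcal{X}}_{\mathcal{Y}}$), Proposition~\ref{prp:rightoth} (the right-orthogonal computation), and Lemma~\ref{orth5} (the (co)resolving transfer). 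For the ``only if'' direction, assume $(^{\bot}\mathcal{X},\mathcal{X})$ and $(^{\bot}\mathcal{Y},\mathcal{Y})$ are cotorsion pairs in $\mathcal{A}$ and $\mathcal{B}$. A cotorsion pair condition $\mathcal{X}=({}^{\bot}\mathcal{X})^{\bot}$ forces $\mathcal{X}$ to be closed under extensions (it is a right-orthogonal class), and likewise for $\mathcal{Y}$; so that half of the conclusion is automatic. It then remains to verify the two orthogonality equalities in $(\mathcal{B}\downarrow T)$: first, $\left(\begin{smallmatrix}{}^{\bot}\mathcal{X}\\{}^{\bot}\mathcal{Y}\end{smallmatrix}\right)={}^{\bot}\langle{\bf h}(\mathcal{X},\mathcal{Y})\rangle$, which is exactly Proposition~\ref{prp:Xexact}; second, $\langle{\bf h}(\mathcal{X},\mathcal{Y})\rangle=\left(\begin{smallmatrix}{}^{\bot}\mathcal{X}\\{}^{\bot}\mathcal{Y}\end{smallmatrix}\right)^{\bot}$, which should follow by applying Proposition~\ref{prp:rightoth} with the substitutions $\mathcal{X}\rightsquigarrow{}^{\bot}\mathcal{X}$, $\mathcal{Y}\rightsquigarrow{}^{\bot}\mathcal{Y}$, using $({}^{\bot}\mathcal{X})^{\bot}=\mathcal{X}$, $({}^{\bot}\mathcal{Y})^{\bot}=\mathcal{Y}$, together with Lemma~\ref{lem:projective} to absorb the extra $\left(\begin{smallmatrix}0\\\mathcal{P}\end{smallmatrix}\right)^{\bot}$ factor (note $\mathcal{Y}\supseteq\mathcal{P}$ since projectives lie in every right-orthogonal class, so $\left(\begin{smallmatrix}{}^{\bot}\mathcal{X}\\{}^{\bot}\mathcal{Y}\end{smallmatrix}\right)^{\bot}\subseteq\left(\begin{smallmatrix}0\\\mathcal{P}\end{smallmatrix}\right)^{\bot}$).

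For the ``if'' direction, assume $\left(\left(\begin{smallmatrix}{}^{\bot}\mathcal{X}\\{}^{\bot}\mathcal{Y}\end{smallmatrix}\right),\langle{\bf h}(\mathcal{X},\mathcal{Y})\rangle\right)$ is a cotorsion pair in $(\mathcal{B}\downarrow T)$ and $\mathcal{X},\mathcal{Y}$ are closed under extensions. One must recover $\mathcal{X}=({}^{\bot}\mathcal{X})^{\bot}$ in $\mathcal{A}$ and $\mathcal{Y}=({}^{\bot}\mathcal{Y})^{\bot}$ in $\mathcal{B}$. The inclusions $\mathcal{X}\subseteq({}^{\bot}\mathcal{X})^{\bot}$ and $\mathcal{Y}\subseteq({}^{\bot}\mathcal{Y})^{\bot}$ are formal. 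For the reverse inclusions: given $A\in({}^{\bot}\mathcal{X})^{\bot}$, I would test $\left(\begin{smallmatrix}A\\TA\end{smallmatrix}\right)$, which lies in $\left(\begin{smallmatrix}({}^{\bot}\mathcal{X})^{\bot}\\?\end{smallmatrix}\right)$; the object $\left(\begin{smallmatrix}A\\TA\end{smallmatrix}\right)_{1}$ equals ${\bf h}(A,0)$ and the ${\bf h}$-adjunction together with Remark~\ref{rmk:functorH} should let me conclude it is right-orthogonal to $\left(\begin{smallmatrix}{}^{\bot}\mathcal{X}\\{}^{\bot}\mathcal{Y}\end{smallmatrix}\right)$, hence lies in $\langle{\bf h}(\mathcal{X},\mathcal{Y})\rangle=\mathfrak{D}^{\mathcal{X}}_{\mathcal{Y}}$, giving $A\in\mathcal{X}$ from the definition of $\mathfrak{D}^{\mathcal{X}}_{\mathcal{Y}}$ (here the hypothesis that $\mathcal{X}$ is closed under extensions is what makes Proposition~\ref{prp:hxyD} applicable). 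Symmetrically, testing $\left(\begin{smallmatrix}0\\B\end{smallmatrix}\right)$ for $B\in({}^{\bot}\mathcal{Y})^{\bot}$ recovers $\mathcal{Y}$. One also needs ${}^{\bot}\mathcal{X}={}^{\bot}(({}^{\bot}\mathcal{X})^{\bot})$ etc., which is a standard Galois-connection identity. Finally, the parenthetical ``hereditary'' clause follows by feeding the equivalence ``cotorsion pair is hereditary $\iff$ the right-hand class is coresolving $\iff$ the left-hand class is resolving'' through Lemma~\ref{orth5}: $\left(\begin{smallmatrix}{}^{\bot}\mathcal{X}\\{}^{\bot}\mathcal{Y}\end{smallmatrix}\right)$ is resolving iff ${}^{\bot}\mathcal{X},{}^{\bot}\mathcal{Y}$ are (part (1)), and one may instead use that $\langle{\bf h}(\mathcal{X},\mathcal{Y})\rangle$ is coresolving iff $\mathcal{X},\mathcal{Y}$ are (part (2)).

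I expect the main obstacle to be the ``if'' direction's reverse inclusions, specifically the bookkeeping that shows $\left(\begin{smallmatrix}A\\TA\end{smallmatrix}\right)_{1}$ (resp. $\left(\begin{smallmatrix}0\\B\end{smallmatrix}\right)$) is genuinely right-orthogonal to \emph{all} of $\left(\begin{smallmatrix}{}^{\bot}\mathcal{X}\\{}^{\bot}\mathcal{Y}\end{smallmatrix}\right)$ and not merely to ${\bf h}({}^{\bot}\mathcal{X},{}^{\bot}\mathcal{Y})$ --- but since $\langle{\bf h}(\mathcal{X},\mathcal{Y})\rangle$ is assumed to be the \emph{second} component of a cotorsion pair, it is by definition $\left(\begin{smallmatrix}{}^{\bot}\mathcal{X}\\{}^{\bot}\mathcal{Y}\end{smallmatrix}\right)^{\bot}$, so I only need to exhibit the relevant $\mathrm{Ext}^1$-vanishing against objects of the form ${\bf h}({}^{\bot}\mathcal{X},0)$ and ${\bf h}(0,{}^{\bot}\mathcal{Y})$ via the adjunction $({\bf q},{\bf h})$ of Remark~\ref{rmk:functorH}, converting $\mathrm{Ext}^1_{(\mathcal{B}\downarrow T)}(\,\cdot\,,{\bf h}(-,-))$ into $\mathrm{Ext}^1$ computed componentwise in $\mathcal{A}\times\mathcal{B}$. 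Once that adjunction-based translation is set up cleanly, the remaining verifications are routine diagram chases of the type already carried out in Propositions~\ref{prp:Xexact} and~\ref{prp:rightoth}.
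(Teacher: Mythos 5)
Your proposal follows essentially the same route as the paper: the forward direction is the paper's computation (identify $^{\bot}\langle{\bf h}(\mathcal{X},\mathcal{Y})\rangle$ by \prpref{Xexact}, and identify $\langle{\bf h}(\mathcal{X},\mathcal{Y})\rangle=\left(\begin{smallmatrix}^{\bot}\mathcal{X}\\ ^{\bot}\mathcal{Y}\end{smallmatrix}\right)^{\bot}$ by \prpref{rightoth} after absorbing the factor $\left(\begin{smallmatrix}0\\ \mathcal{P}\end{smallmatrix}\right)^{\bot}$), and the converse tests exactly the same objects $\left(\begin{smallmatrix}A\\ TA\end{smallmatrix}\right)={\bf h}(A,0)$ and $\left(\begin{smallmatrix}0\\ B\end{smallmatrix}\right)={\bf h}(0,B)$ and then invokes \prpref{hxyD}; your explicit handling of the hereditary clause via Lemma~\ref{orth5} is actually spelled out more fully than in the paper. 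Two small repairs are needed. First, the absorption step should be justified by $\mathcal{P}\subseteq{}^{\bot}\mathcal{Y}$ (projective objects lie in every \emph{left}-orthogonal class), not by ``$\mathcal{P}\subseteq\mathcal{Y}$''; it is the inclusion $\left(\begin{smallmatrix}0\\ \mathcal{P}\end{smallmatrix}\right)\subseteq\left(\begin{smallmatrix}^{\bot}\mathcal{X}\\ ^{\bot}\mathcal{Y}\end{smallmatrix}\right)$ that yields $\left(\begin{smallmatrix}^{\bot}\mathcal{X}\\ ^{\bot}\mathcal{Y}\end{smallmatrix}\right)^{\bot}\subseteq\left(\begin{smallmatrix}0\\ \mathcal{P}\end{smallmatrix}\right)^{\bot}$. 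Second, the adjunction $({\bf q},{\bf h})$ does \emph{not} convert $\operatorname{Ext}^{1}_{(\mathcal{B}\downarrow T)}(-,{\bf h}(-,-))$ into componentwise $\operatorname{Ext}^{1}$ isomorphically unless $T$ is exact (${\bf h}$ is only left exact and ${\bf q}$ need not preserve projectives); what is true, and all you need, is the one-way transfer of vanishing: since ${\bf q}$ is exact and ${\bf h}$ preserves injectives there is a natural monomorphism $\operatorname{Ext}^{1}_{(\mathcal{B}\downarrow T)}(W,{\bf h}(V))\hookrightarrow\operatorname{Ext}^{1}_{\mathcal{A}\times\mathcal{B}}({\bf q}W,V)$, or, as the paper does, one splits the relevant extensions directly by lifting a retraction $m'$ of the $\mathcal{A}$-component to the comma-category retraction $\left(\begin{smallmatrix}m'\\ (Tm')\psi\end{smallmatrix}\right)$ (with $\psi$ the structure map of the middle term), exactly as in the first half of \prpref{Xexact}. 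With these two points fixed, your argument coincides with the paper's proof.
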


\begin{proof}
``$\Rightarrow$". Let $T:\mathcal{A}\rightarrow\mathcal{B}$ be $\mathcal{X}$-exact, $(^{\bot}\mathcal{X},~\mathcal {X})$ be a cotorsion pair in $\mathcal{A}$  and $(^{\bot}\mathcal{Y},~\mathcal {Y})$ a cotorsion pair in $\mathcal{B}$. Then all projective objects belong to $^{\bot}\mathcal{Y}$. By \prpref{rightoth}, one has
$$\langle{\bf h}(\mathcal{X},~\mathcal{Y})\rangle=\langle{\bf h}((^{\bot}\mathcal{X}){^{\bot}},~{(^{\bot}\mathcal{Y})^{\bot})}\rangle={(\begin{smallmatrix}^{\bot}\mathcal{X}\\
^{\bot\mathcal{Y}}\end{smallmatrix})^{\bot}}\cap{(\begin{smallmatrix}0\\ \mathcal{P} \end{smallmatrix})^{\bot}}={(\begin{smallmatrix}^{\bot}\mathcal{X}\\
^{\bot\mathcal{Y}}\end{smallmatrix})^{\bot}}.$$
\noindent It follows from \prpref{Xexact} that $^{\bot}{\langle{\bf h}(\mathcal{X},~\mathcal {Y})\rangle}={(\begin{smallmatrix}^{\bot}\mathcal{X}\\^{\bot}\mathcal {Y}\end{smallmatrix})}$, so $((\begin{smallmatrix}^{\bot}\mathcal{X}\\^{\bot}\mathcal{Y}\end{smallmatrix}),~\langle{\bf h}(\mathcal{X},~\mathcal{Y})\rangle)$ is a cotorsion pair in $(\mathcal{B}\downarrow T)$.

``$\Leftarrow$". Let $((\begin{smallmatrix}^{\bot}\mathcal{X}\\^{\bot}\mathcal{Y}\end{smallmatrix}),
~\langle{\bf h}(\mathcal{X},~\mathcal{Y})\rangle)$ be a cotorsion pair in $(\mathcal{B}\downarrow T)$ and $\mathcal{X}$, $\mathcal {Y}$ be closed under extensions. It is sufficient to show that  ${(^{\bot}\mathcal {X})^{\bot}}\subseteq\mathcal {X}$ and ${(^{\bot}\mathcal {Y})^{\bot}}\subseteq\mathcal{Y}$. Assume that $X\in{(^{\bot}\mathcal{X})^{\bot}}$ and $Y\in{(^{\bot}\mathcal{Y})^{\bot}}$. Then for any $M\in\,^{\bot}\mathcal{X}$ and $N\in\,^{\bot}\mathcal{Y}$, it is clear that the following exact sequences
$$0\rightarrow\left(\hspace{-0.15cm}\begin{array}{c} X\\TX\\\end{array}\hspace{-0.15cm}\right)\rightarrow \left(\hspace{-0.15cm}\begin{array}{c} A\\B\\\end{array}\hspace{-0.15cm}\right)\rightarrow
\left(\hspace{-0.15cm}\begin{array}{c} M\\N\\\end{array}\hspace{-0.15cm}\right)\rightarrow0~\rm{and}~
0\rightarrow \left(\hspace{-0.15cm}\begin{array}{c} 0\\Y\\\end{array}\hspace{-0.15cm}\right)\rightarrow \left(\hspace{-0.15cm}\begin{array}{c} M\\C\\\end{array}\hspace{-0.15cm}\right)\rightarrow \left(\hspace{-0.15cm}\begin{array}{c} M\\N\\\end{array}\hspace{-0.15cm}\right)\rightarrow0 $$
are split. This implies that $\left(\begin{smallmatrix}X\\TX \end{smallmatrix}\right), \left(\begin{smallmatrix}0\\Y\end{smallmatrix}\right)\in {(\begin{smallmatrix}^{\bot}\mathcal{X}\\^{\bot}\mathcal {Y}\end{smallmatrix})^{\bot}}=\langle{\bf h}(\mathcal{X},~\mathcal{Y})\rangle$. Therefore $X\in\mathcal{X}, Y\in\mathcal{Y}$ by \prpref{hxyD}.
\end{proof}

\begin{prp}\label{prp:cotorsionpair}
Let $T:\mathcal{A}\rightarrow\mathcal{B}$ be $\mathcal{X}$-exact and $\mathcal{A}$ and $\mathcal{B}$ both have enough projective objects and enough injective objects. If $(^{\bot}\mathcal{X},~\mathcal{X})$ and $(^{\bot}\mathcal{Y},~\mathcal{Y})$ are complete cotorsion pairs in $\mathcal{A}$ and $\mathcal{B}$, respectively, then so is $(\left(\begin{smallmatrix}^{\bot}\mathcal{X}\\^{\bot}\mathcal{Y}\end{smallmatrix}\right),~\langle{\bf h}(\mathcal{X},~\mathcal{Y})\rangle)$. Moreover, the converse holds if $T(^{\bot}\mathcal{X}\cap\mathcal {X})\subseteq{^{\bot}\mathcal{Y}}$ and $\mathcal{X}$, $\mathcal{Y}$ are closed under extensions.
\end{prp}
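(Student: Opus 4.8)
The plan is to express completeness of the comma-category pair through special precovers of its left-hand class, assembled from the data in $\mathcal{A}$ and $\mathcal{B}$. First I would record two facts. In both implications $\mathcal{X}$ and $\mathcal{Y}$ are closed under extensions --- automatically in ``$\Rightarrow$'', since they are right halves of cotorsion pairs, and by hypothesis in ``$\Leftarrow$'' --- so the preceding theorem gives that $((\begin{smallmatrix}^{\bot}\mathcal{X}\\^{\bot}\mathcal{Y}\end{smallmatrix}),\langle{\bf h}(\mathcal{X},\mathcal{Y})\rangle)$ is a cotorsion pair in $(\mathcal{B}\downarrow T)$ and, by \prpref{hxyD}, $\langle{\bf h}(\mathcal{X},\mathcal{Y})\rangle=\mathfrak{D}^{\mathcal{X}}_{\mathcal{Y}}$. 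Second, $(\mathcal{B}\downarrow T)$ has enough injective objects: by \rmkref{functorH} the functor ${\bf h}$ preserves injectives, and the unit of the adjunction ${\bf q}\dashv{\bf h}$ embeds $\left(\begin{smallmatrix}A\\B\end{smallmatrix}\right)_{\varphi}$ into ${\bf h}(A,B)$, which in turn embeds into ${\bf h}(E_{A},E_{B})$ for injective envelopes $A\hookrightarrow E_{A}$ and $B\hookrightarrow E_{B}$. Consequently a cotorsion pair in $(\mathcal{B}\downarrow T)$ is complete as soon as its left-hand class is special precovering, and --- since $\mathcal{A}$ and $\mathcal{B}$ have enough projectives --- a cotorsion pair in $\mathcal{A}$ or $\mathcal{B}$ is complete as soon as its right-hand class is special preenveloping.

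For ``$\Rightarrow$'', given $\left(\begin{smallmatrix}A\\B\end{smallmatrix}\right)_{\varphi}$ I would choose a special ${}^{\bot}\mathcal{X}$-precover $0\to X_{0}\to C_{0}\xrightarrow{p}A\to0$, so that $C_{0}\in{}^{\bot}\mathcal{X}$ and $X_{0}\in({}^{\bot}\mathcal{X})^{\bot}=\mathcal{X}$; since $T$ is $\mathcal{X}$-exact, $Tp\colon TC_{0}\to TA$ is epic with kernel $TX_{0}$. Forming the pullback $Z=B\times_{TA}TC_{0}$, with projections $\pi\colon Z\to B$ (epic, $\ker\pi\cong TX_{0}$) and $\pi'\colon Z\to TC_{0}$, the morphism $\left(\begin{smallmatrix}p\\\pi\end{smallmatrix}\right)\colon\left(\begin{smallmatrix}C_{0}\\Z\end{smallmatrix}\right)_{\pi'}\to\left(\begin{smallmatrix}A\\B\end{smallmatrix}\right)_{\varphi}$ is epic with kernel $\left(\begin{smallmatrix}X_{0}\\TX_{0}\end{smallmatrix}\right)\in\mathfrak{D}^{\mathcal{X}}_{\mathcal{Y}}$. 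Since $Z$ need not lie in ${}^{\bot}\mathcal{Y}$, I would next pick a special ${}^{\bot}\mathcal{Y}$-precover $0\to Y_{1}\to D_{1}\xrightarrow{r}Z\to0$ with $D_{1}\in{}^{\bot}\mathcal{Y}$ and $Y_{1}\in\mathcal{Y}$, and pass to $\left(\begin{smallmatrix}p\\\pi r\end{smallmatrix}\right)\colon\left(\begin{smallmatrix}C_{0}\\D_{1}\end{smallmatrix}\right)_{\pi'r}\to\left(\begin{smallmatrix}A\\B\end{smallmatrix}\right)_{\varphi}$: it is still epic, its middle term now lies in $(\begin{smallmatrix}^{\bot}\mathcal{X}\\^{\bot}\mathcal{Y}\end{smallmatrix})$, and its kernel has $\mathcal{A}$-component $X_{0}\in\mathcal{X}$ while its structure map is $\pi'r$ restricted to $\ker(\pi r)=r^{-1}(\ker\pi)$, which --- after identifying $\ker\pi\cong TX_{0}$ --- is simply $r$ restricted, hence epic onto $TX_{0}$ with kernel $\ker r=Y_{1}\in\mathcal{Y}$. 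Thus the kernel lies in $\mathfrak{D}^{\mathcal{X}}_{\mathcal{Y}}=\langle{\bf h}(\mathcal{X},\mathcal{Y})\rangle$, so $\left(\begin{smallmatrix}p\\\pi r\end{smallmatrix}\right)$ is the desired special $(\begin{smallmatrix}^{\bot}\mathcal{X}\\^{\bot}\mathcal{Y}\end{smallmatrix})$-precover and the pair is complete.

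For ``$\Leftarrow$'' I would test the comma-category special preenvelopes against $\left(\begin{smallmatrix}A\\0\end{smallmatrix}\right)$ and $\left(\begin{smallmatrix}0\\B\end{smallmatrix}\right)$. A special $\langle{\bf h}(\mathcal{X},\mathcal{Y})\rangle$-preenvelope of $\left(\begin{smallmatrix}A\\0\end{smallmatrix}\right)$ has $\mathcal{A}$-components forming a short exact sequence $0\to A\to X\to A''\to0$ with $X\in\mathcal{X}$ and $A''\in{}^{\bot}\mathcal{X}$, that is, a special $\mathcal{X}$-preenvelope of $A$; hence $({}^{\bot}\mathcal{X},\mathcal{X})$ is complete. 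For a special $\langle{\bf h}(\mathcal{X},\mathcal{Y})\rangle$-preenvelope $0\to\left(\begin{smallmatrix}0\\B\end{smallmatrix}\right)\xrightarrow{(0,j)}\left(\begin{smallmatrix}X\\Y\end{smallmatrix}\right)_{\psi}\to\left(\begin{smallmatrix}X''\\B''\end{smallmatrix}\right)\to0$, the $\mathcal{A}$-components force $X\cong X''$, so $X\in\mathcal{X}\cap{}^{\bot}\mathcal{X}$ and therefore $TX\in{}^{\bot}\mathcal{Y}$ by hypothesis; commutativity against the zero structure map of $\left(\begin{smallmatrix}0\\B\end{smallmatrix}\right)$ gives $\psi j=0$, so $j$ factors through a monomorphism $B\hookrightarrow\ker\psi$ with $\ker\psi\in\mathcal{Y}$, and $Y/\ker\psi\cong TX$ because $\psi$ is epic. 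A diagram chase on the $\mathcal{B}$-components then places $(\ker\psi)/B$ in a short exact sequence whose other terms are $B''$ and $TX$, both in ${}^{\bot}\mathcal{Y}$; feeding in $T({}^{\bot}\mathcal{X}\cap\mathcal{X})\subseteq{}^{\bot}\mathcal{Y}$ one gets $(\ker\psi)/B\in{}^{\bot}\mathcal{Y}$, so $B\hookrightarrow\ker\psi$ is a special $\mathcal{Y}$-preenvelope and $({}^{\bot}\mathcal{Y},\mathcal{Y})$ is complete.

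The hard part will be the bookkeeping forced by $T$ not being exact: each time the $\mathcal{A}$-component of an object is enlarged into $\mathcal{X}$, a copy of $T$ of an $\mathcal{X}$-object is forced into the $\mathcal{B}$-component and must be reconciled with ${}^{\bot}\mathcal{Y}$. In the forward direction this costs nothing, because the auxiliary ${}^{\bot}\mathcal{Y}$-precover step relocates the offending $TX_{0}$ into the kernel of the structure map, where it need only be hit epimorphically with kernel the legitimate $\mathcal{Y}$-object $Y_{1}$. In the backward direction the $\mathcal{A}$-component of a preenvelope of $\left(\begin{smallmatrix}0\\B\end{smallmatrix}\right)$ is unavoidably in $\mathcal{X}\cap{}^{\bot}\mathcal{X}$, leaving no room to maneuver, and the extra hypothesis $T({}^{\bot}\mathcal{X}\cap\mathcal{X})\subseteq{}^{\bot}\mathcal{Y}$ is precisely what closes the argument --- which is why it is imposed only there.
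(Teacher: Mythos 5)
Your forward direction and the $\mathcal{X}$-half of the converse are correct and essentially the paper's own argument: the paper likewise takes a special ${}^{\bot}\mathcal{X}$-precover of $A$, pulls back along $B\to TA$, takes a special ${}^{\bot}\mathcal{Y}$-precover of the resulting pullback, and reassembles; in the converse it reads off the special $\mathcal{X}$-preenvelope from the $\mathcal{A}$-components of a preenvelope of ${\bf h}(A,0)$ (your test object $\left(\begin{smallmatrix}A\\0\end{smallmatrix}\right)$ works equally well), and it uses the same $3\times 3$ diagram for $\left(\begin{smallmatrix}0\\B\end{smallmatrix}\right)$. Your explicit verification that $(\mathcal{B}\downarrow T)$ has enough injectives, so that special precovering of the left class already gives completeness, is a point the paper passes over rather quickly.

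There is, however, a genuine gap in your last step of the converse. You conclude $(\ker\psi)/B\in{}^{\bot}\mathcal{Y}$ from the short exact sequence $0\to(\ker\psi)/B\to B''\to TX''\to 0$ with $B''\in{}^{\bot}\mathcal{Y}$ and $TX''\cong TX\in{}^{\bot}\mathcal{Y}$. That inference asserts that ${}^{\bot}\mathcal{Y}$ is closed under kernels of epimorphisms between its objects, which is precisely the hereditary condition on $({}^{\bot}\mathcal{Y},\mathcal{Y})$ --- and hereditariness is not assumed in this proposition. Applying $\operatorname{Hom}_{\mathcal{B}}(-,Y')$ for $Y'\in\mathcal{Y}$ only traps $\operatorname{Ext}^{1}_{\mathcal{B}}((\ker\psi)/B,\,Y')$ between $\operatorname{Ext}^{1}_{\mathcal{B}}(B'',Y')=0$ and $\operatorname{Ext}^{2}_{\mathcal{B}}(TX'',Y')$, and the latter need not vanish. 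The paper closes this step differently, and you already have all the ingredients for its argument: since $\ker\psi\in\mathcal{Y}$ and $TX\in{}^{\bot}\mathcal{Y}$ (here the hypothesis $T({}^{\bot}\mathcal{X}\cap\mathcal{X})\subseteq{}^{\bot}\mathcal{Y}$ enters), one has $\operatorname{Ext}^{1}_{\mathcal{B}}(TX,\ker\psi)=0$, so the column $0\to\ker\psi\to Y\to TX\to 0$ splits; the column $0\to(\ker\psi)/B\to B''\to TX''\to 0$ is its pushout along the epimorphism $\ker\psi\twoheadrightarrow(\ker\psi)/B$ (the two horizontal epimorphisms of the relevant square have the same kernel $B$), hence it splits as well, so $(\ker\psi)/B$ is a direct summand of $B''$ and therefore lies in ${}^{\bot}\mathcal{Y}$. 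With that replacement your argument is complete; as written, the step would only be valid under the hereditary hypothesis, which is not part of the statement being proved.
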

\begin{proof}Assume that $(^{\bot}\mathcal{X},~\mathcal{X})$ is a complete cotorsion pair in $\mathcal{A}$ and $(^{\bot}\mathcal{Y},~\mathcal{Y})$ is a complete cotorsion pair in $\mathcal{B}$. For any $\left(\begin{smallmatrix}A\\B\end{smallmatrix}\right)\in(\mathcal{B}\downarrow T)$, there exists an exact sequence $0\rightarrow X\rightarrow U\rightarrow A\rightarrow0$ in $\mathcal{A}$ with $U\in^{\bot}\mathcal{X}$ and $X\in \mathcal{X}$. Since $T$ is $\mathcal{X}$-exact, there is a pullback diagram
$$\xymatrix{0\ar[r]&TX\ar[r]\ar@{=}[d]&C\ar[r]\ar[d]&B\ar[r]\ar[d]&0\\
0\ar[r]&TX \ar[r]&TU\ar[r]&TA \ar[r]&0.\\}$$

Furthermore, we have an exact sequence $0\rightarrow D\rightarrow V\rightarrow C\rightarrow0$ in $\mathcal{A}$ with $V\in^{\bot}\mathcal{Y}$ and $D\in\mathcal{Y}$. Consider the following pullback diagram
$$\xymatrix{&0\ar[d]&0\ar[d]&\\&D\ar@{=}[r]\ar[d]&D\ar[d]&\\
0\ar[r]&Y\ar[r]\ar[d]&V\ar[r]\ar[d]&B\ar[r]\ar@{=}[d]&0\\
0\ar[r]&TX\ar[r]\ar[d]&C\ar[d]\ar[r]&B \ar[r]&0\\
&0&0&\\}$$

Thus we get an exact sequence $0\rightarrow\left(\begin{smallmatrix}X\\Y\end{smallmatrix}\right){\rightarrow} \left(\begin{smallmatrix}U\\V\end{smallmatrix}\right)\rightarrow\left(\begin{smallmatrix}A\\B\end{smallmatrix}\right)\rightarrow0$ in $(\mathcal{B}\downarrow T)$ with $\left(\begin{smallmatrix}U\\V\end{smallmatrix}\right)\in(\begin{smallmatrix}^{\bot}\mathcal{X}\\^{\bot}\mathcal {Y}\end{smallmatrix})$ and $\left(\begin{smallmatrix}X\\Y\end{smallmatrix}\right)\in\langle{\bf h}(\mathcal {X},~\mathcal {Y})\rangle$. Note that $\mathcal{A}$ and $\mathcal{B}$ have enough projective objects, so is $(\mathcal{B}\downarrow T)$. It follows that $((\begin{smallmatrix}^{\bot}\mathcal{X}\\^{\bot}\mathcal {Y}\end{smallmatrix}),~\langle{\bf h}(\mathcal{X},~\mathcal{Y})\rangle)$ is a complete cotorsion pair in $(\mathcal{B}\downarrow T)$.

Conversely, assume that $((\begin{smallmatrix}^{\bot}\mathcal{X}\\^{\bot}\mathcal {Y}\end{smallmatrix}),~\langle{\bf h}(\mathcal{X},~\mathcal{Y})\rangle)$ is a complete cotorsion pair in $(\mathcal{B}\downarrow T)$, $T(^{\bot}\mathcal{X} \cap \mathcal{X})\subseteq{^{\bot}\mathcal{Y}}$ and $\mathcal{X}, \mathcal{Y}$ are closed under extensions. Then for any $A\in\mathcal{A}$, we have an exact sequence  $0\rightarrow\left(\begin{smallmatrix}A\\TA\end{smallmatrix}\right){\rightarrow} \left(\begin{smallmatrix}X\\Y\end{smallmatrix}\right)\rightarrow\left(\begin{smallmatrix}U\\V\end{smallmatrix}\right)\rightarrow0$ in $(\mathcal{B}\downarrow T)$ with $\left(\begin{smallmatrix}X\\Y\end{smallmatrix}\right)\in\langle{\bf h}(\mathcal {X},~\mathcal {Y})\rangle$ and $(\begin{smallmatrix}U\\V\end{smallmatrix})\in(\begin{smallmatrix}^{\bot}\mathcal{X}\\^{\bot}\mathcal {Y}\end{smallmatrix})$. Thus we have an exact sequence $0\rightarrow A\rightarrow X\rightarrow U\rightarrow0$ in ${\mathcal{A}}$ with $X\in\mathcal{X}$ and $U\in^{\bot}\mathcal{X}$ which means that $(^{\bot}\mathcal{X},~\mathcal {X})$ is a complete cotorsion pair in $\mathcal{A}$.
Furthermore, for any $B\in\mathcal{B}$, we have an exact sequence  $0\rightarrow\left(\begin{smallmatrix}0\\B\end{smallmatrix}\right){\rightarrow} \left(\begin{smallmatrix}M\\C\end{smallmatrix}\right)_\varphi{\rightarrow}\left(\begin{smallmatrix}M\\N \end{smallmatrix}\right)_{\phi}\rightarrow0$ in $(\mathcal{B}\downarrow T)$  with $\left(\begin{smallmatrix}M\\C\end{smallmatrix}\right)_\varphi\in\langle{\bf h}(\mathcal {X},~\mathcal {Y})\rangle$ and $\left(\begin{smallmatrix}M\\N\end{smallmatrix}\right)_\phi\in\left(\begin{smallmatrix}^{\bot}\mathcal{X}\\^{\bot}\mathcal {Y}\end{smallmatrix}\right)$. Thus we obtain that $M\in^{\bot}\mathcal{X}\cap\mathcal{X}$, ker$\varphi\in\mathcal{Y}$, $N\in^{\bot}\mathcal{Y}$ and $\varphi$ is a epimorphism. Consequently,
we have the following commutative diagram of exact rows and columns:
$$\xymatrix{&&0\ar[d]&0\ar[d]\\
0\ar[r]&B\ar[r]\ar@{=}[d]&\rm{ker}\varphi\ar[r]\ar[d]&\rm{ker}\phi\ar[r]\ar[d]&0\\
0\ar[r]&B\ar[r]&C\ar[d]^{\varphi}\ar[r]&N\ar[r]\ar[d]^\phi&0\\
&&TM\ar@{=}[r]\ar[d]&TM\ar[d]\\
&&0&0.\\}$$
By hypotheses, $TK\in^{\bot}\mathcal{Y}$. It follows that the middle column splits which induces that the right column is split. Thus ker$\phi$ is a direct summand of $Y$, i.e., ker$\phi\in^{\bot}\mathcal{Y}$. It follows from the exact sequence $0\rightarrow B\rightarrow\rm{ker}\varphi\rightarrow\rm{ker}\phi\rightarrow0$ that $(^{\bot}\mathcal{Y},~\mathcal{X})$ is a complete cotorsion pair in $\mathcal{B}$.
\end{proof}

\begin{rmk} {\rm If we replace the condition ``$T(\mathcal {Y} \cap \mathcal {Y}^{\bot})\subseteq{\mathcal {X}^{\bot}}$" with ``$T(\mathcal {Y} \cap \mathcal {Y}^{\bot})\subseteq{\mathcal {X}}$" in \prpref{cotorsionpair}, the result still holds. In the case $T(\mathcal {Y} \cap \mathcal {Y}^{\bot})\subseteq{\mathcal {X}}$, for any $A\in\mathcal{A}$, it is easy to check that the exact sequence $0\to K\to M\to A\to 0$ in the third commutative diagram in \prpref{cotorsionpair} satisfies that $K\in{\mathcal{X}^{\perp}}$ and $M\in{\mathcal{X}}$. This implies that $(\mathcal {X}, \mathcal {X}^{\bot})$ is a complete cotorsion pair in $\mathcal{A}$.}
\end{rmk}

Let $\mathcal{L}$ be a class of objects in an abelian category $\mathcal{D}$. We denote by $\mathsf{Smd}(\mathcal{L})$ the class of direct summands of objects in $\mathcal{L}$.

\begin{lem}\label{lem:cotorsionpair}
Let $\mathcal{D}$ be an abelian category with enough projective objects. If $\mathcal L$ is special preeveloping in $\mathcal{D}$, then $(^\bot\mathsf{Smd}(\mathcal{L}),~\mathsf{Smd}(\mathcal{L}))$ is a complete cotorsion pair.
\end{lem}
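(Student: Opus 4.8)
The plan is to show that $\mathsf{Smd}(\mathcal{L})$ is special preenveloping and then invoke the standard fact (valid since $\mathcal{D}$ has enough projectives) that a class $\mathcal{C}$ which is special preenveloping and closed under direct summands gives a complete cotorsion pair $({}^{\bot}\mathcal{C},\mathcal{C})$; in fact, one shows more precisely that $\mathsf{Smd}(\mathcal{L})^{\bot\bot}=\mathsf{Smd}(\mathcal{L})$ or equivalently that $\mathcal{C}:=\mathsf{Smd}(\mathcal{L})$ is already the right-hand side of a cotorsion pair. First I would record the trivial observations that $\operatorname{Ext}^1_{\mathcal{D}}(D,L\oplus L')=\operatorname{Ext}^1_{\mathcal{D}}(D,L)\oplus\operatorname{Ext}^1_{\mathcal{D}}(D,L')$ and that a direct summand of an $\operatorname{Ext}^1$-vanishing object again has vanishing $\operatorname{Ext}^1$, so that ${}^{\bot}\mathsf{Smd}(\mathcal{L})={}^{\bot}\mathcal{L}$; dually, since adding a direct summand can only shrink the left orthogonal, $\mathsf{Smd}({}^{\bot}\mathsf{Smd}(\mathcal{L}))={}^{\bot}\mathsf{Smd}(\mathcal{L})$ automatically because left-orthogonal classes are always closed under summands. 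So it remains to prove $({}^{\bot}\mathcal{L})^{\bot}=\mathsf{Smd}(\mathcal{L})$.

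The inclusion $\mathsf{Smd}(\mathcal{L})\subseteq({}^{\bot}\mathcal{L})^{\bot}$ is immediate from the definitions together with the summand remark. For the reverse inclusion, take $D\in({}^{\bot}\mathcal{L})^{\bot}$. By the hypothesis that $\mathcal{L}$ is special preenveloping, there is a short exact sequence $0\to D\xrightarrow{g} L\to C\to 0$ with $L\in\mathcal{L}$ and $C\in{}^{\bot}\mathcal{L}$ (here I use the definition of special $\mathcal{L}$-preenvelope, namely that $\operatorname{coker}g\in{}^{\bot}\mathcal{L}$). Now $\operatorname{Ext}^1_{\mathcal{D}}(C,D)=0$ because $C\in{}^{\bot}\mathcal{L}\supseteq{}^{\bot}\mathsf{Smd}(\mathcal L)$ and, as we just noted, ${}^{\bot}\mathcal L={}^{\bot}\mathsf{Smd}(\mathcal L)$, while $D\in({}^{\bot}\mathcal L)^{\bot}$; more directly, $D\in({}^{\bot}\mathcal{L})^{\bot}$ and $C\in{}^{\bot}\mathcal{L}$ forces $\operatorname{Ext}^1_{\mathcal{D}}(C,D)=0$ by the very meaning of the right orthogonal. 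Hence the sequence splits, so $D$ is a direct summand of $L\in\mathcal{L}$, i.e.\ $D\in\mathsf{Smd}(\mathcal{L})$. This gives $({}^{\bot}\mathcal{L})^{\bot}\subseteq\mathsf{Smd}(\mathcal{L})$, and combining with the previous paragraph, $({}^{\bot}\mathsf{Smd}(\mathcal{L}))^{\bot}=\mathsf{Smd}(\mathcal{L})$, so $({}^{\bot}\mathsf{Smd}(\mathcal{L}),\mathsf{Smd}(\mathcal{L}))$ is a cotorsion pair.

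It remains to check completeness. Since $\mathcal{D}$ has enough projective objects, it suffices (by the remark recalled in Section~2) to verify that the right-hand class $\mathsf{Smd}(\mathcal{L})$ is special preenveloping. But this is almost the hypothesis: given any $D\in\mathcal{D}$, a special $\mathcal{L}$-preenvelope $0\to D\to L\to C\to 0$ with $L\in\mathcal{L}\subseteq\mathsf{Smd}(\mathcal{L})$ and $C\in{}^{\bot}\mathcal{L}={}^{\bot}\mathsf{Smd}(\mathcal{L})$ is in particular a special $\mathsf{Smd}(\mathcal{L})$-preenvelope. Therefore $\mathsf{Smd}(\mathcal{L})$ is special preenveloping and the cotorsion pair $({}^{\bot}\mathsf{Smd}(\mathcal{L}),\mathsf{Smd}(\mathcal{L}))$ is complete.

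The argument is essentially a bookkeeping exercise, so there is no serious obstacle; the only point requiring a little care is the self-orthogonality identity ${}^{\bot}\mathcal{L}={}^{\bot}\mathsf{Smd}(\mathcal{L})$, which one must use in both directions — once to see that $C\in{}^{\bot}\mathcal{L}$ still pairs trivially against $D$, and once to see that the special $\mathcal{L}$-preenvelope is genuinely a special $\mathsf{Smd}(\mathcal{L})$-preenvelope — and the elementary fact that left-orthogonal classes are automatically closed under direct summands and extensions, which is what makes $({}^{\bot}\mathsf{Smd}(\mathcal{L}),\mathsf{Smd}(\mathcal{L}))$ a genuine cotorsion pair rather than merely a pair of orthogonal classes.
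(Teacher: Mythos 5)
Your proof is correct and follows essentially the same route as the paper: observe ${}^{\bot}\mathsf{Smd}(\mathcal{L})={}^{\bot}\mathcal{L}$, use a special $\mathcal{L}$-preenvelope of any object of $({}^{\bot}\mathsf{Smd}(\mathcal{L}))^{\bot}$ and the resulting split sequence to get $({}^{\bot}\mathsf{Smd}(\mathcal{L}))^{\bot}=\mathsf{Smd}(\mathcal{L})$, and then deduce completeness from enough projectives by noting that a special $\mathcal{L}$-preenvelope is already a special $\mathsf{Smd}(\mathcal{L})$-preenvelope. If anything, your write-up states the preenvelope sequence in the correct direction ($0\to D\to L\to C\to 0$ with $C\in{}^{\bot}\mathcal{L}$), which is what the paper's argument actually needs.
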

\begin{proof}
Clearly, $^\bot\mathsf{Smd}(\mathcal{L})=\,^\bot\mathcal{L}$. Then it is sufficient to show that $\mathsf{Smd}(\mathcal{L})$ is a special preenveloping class. We claim that ${(^{\bot}\mathsf{Smd}(\mathcal{L}))^{\bot}}\subseteq\mathsf{Smd}(\mathcal{L})$. For any $X\in{(^{\bot}\mathsf{Smd}(\mathcal{L}))^{\bot}}$, there is an exact sequence $\xi:0\rightarrow\overline{L}\rightarrow L \rightarrow X\rightarrow0$ with $\overline{L}\in ^{\bot}\mathcal{L}$ and $L\in\mathcal{L}$ since $\mathcal{L}$ is  special preenveloping. Thus $\xi$ is split as $^{\bot}\mathsf{Smd}(\mathcal{L})=^{\bot}\mathcal{L}$. It follows that $X\in\mathsf{Smd}(\mathcal{L})$. Therefore, ${(^{\bot}\mathsf{Smd}(\mathcal{L}))^{\bot}}=\mathsf{Smd}(\mathcal{L})$.
\end{proof}

\section{Gorenstein injective objects}

\noindent In this section, we characterize when special preenveloping classes in abelian categories $\mathcal{A}$ and $\mathcal{B}$ can induce special preenveloping classes in $(\mathcal{B}\hspace{-0.05cm}\downarrow\hspace{-0.05cm}T)$.

\begin{dfn}\label{dfn:cocompatible}
The left exact functor $T:\mathcal{A}\rightarrow \mathcal{B}$ is $\emph{cocompatible}$, if the following two conditions hold:
\begin{enumerate}
\item[(C1)] $TE^\bullet$ is exact for each exact sequence $E^\bullet$ of injective objects in $\mathcal{A}$.
\item[(C2)] ${\rm Hom}_{\mathcal{B}}(TI,~J^\bullet)$ is  exact for each totally acyclic complex of injective objects $J^\bullet$ in $\mathcal{B}$ and each injective object $I$ in $\mathcal{A}$.
\end{enumerate}
Moreover, $T:\mathcal{A}\rightarrow \mathcal{B}$ is called $\emph{weak~cocompatible}$, if $T$ satisfies conditions (W1) and (C2), where
\begin{enumerate}
\item[(W1)] $TE^\bullet$ is exact for each totally acyclic complex of injective objects $E^\bullet$ in $\mathcal{A}$.
\end{enumerate}
\end{dfn}

\begin{prp}\label{00}
Let $M$ be an $R$-$S$-bimodule and $T={\rm Hom}_R(M,~-)$. Then
\begin{enumerate}
\item If {\rm pd}$_RM$ is finite, then $T$ satisfies (C1).
\item If {\rm fd}$M_S$ is finite, then $T$ satisfies (C2).
\end{enumerate}
\end{prp}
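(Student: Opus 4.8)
The plan is to translate each condition into a statement about resolutions and then apply standard homological machinery. For part (1), suppose $\operatorname{pd}_R M = n < \infty$, and let $E^\bullet$ be an exact sequence of injective $R$-modules; we must show $T E^\bullet = \operatorname{Hom}_R(M, E^\bullet)$ is exact. The key observation is that an exact complex of injectives is, at each spot, built from short exact sequences $0 \to \operatorname{Z}_i \to E_i \to \operatorname{Z}_{i-1} \to 0$, so it suffices to show that $\operatorname{Hom}_R(M, -)$ sends each such short exact sequence to a short exact sequence, i.e. that $\operatorname{Ext}^1_R(M, \operatorname{Z}_i) = 0$ for every syzygy-type module $\operatorname{Z}_i$ appearing as a kernel in $E^\bullet$. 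First I would argue that the kernels $\operatorname{Z}_i$ of an exact complex of injectives are themselves ``cosyzygies'': dimension-shifting along the injective modules to the right shows $\operatorname{Ext}^k_R(M, \operatorname{Z}_i) \cong \operatorname{Ext}^{k+1}_R(M, \operatorname{Z}_{i+1}) \cong \cdots$, and since $\operatorname{pd}_R M = n$, all $\operatorname{Ext}^j_R(M, -)$ vanish for $j > n$; pushing the shift far enough forces $\operatorname{Ext}^1_R(M, \operatorname{Z}_i) = 0$. Hence $T$ preserves exactness of $E^\bullet$, giving (C1).

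For part (2), suppose $\operatorname{fd} M_S = m < \infty$, let $J^\bullet$ be a totally acyclic complex of injective objects in the category of $S$-modules, and let $I$ be an injective $R$-module; we must show $\operatorname{Hom}_S(T I, J^\bullet) = \operatorname{Hom}_S(\operatorname{Hom}_R(M, I), J^\bullet)$ is exact. The idea is to identify $\operatorname{Hom}_R(M, I)$ with a suitable tensor/Hom expression that makes it flat-adjacent as an $S$-module. The cleanest route: choose $I$ to be a (summand of a product of copies of a) character-dual type injective, or more robustly, use that over $R$ every injective $I$ is a direct summand of $\operatorname{Hom}_{\mathbb{Z}}(R, D)$-style cogenerators, so $\operatorname{Hom}_R(M, I)$ becomes $\operatorname{Hom}_{\mathbb{Z}}(M, D)$ as an $S$-module — the character module of $M$ over $S$. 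Since $\operatorname{fd} M_S = m < \infty$, the character module $M^+ = \operatorname{Hom}_{\mathbb{Z}}(M, \mathbb{Q}/\mathbb{Z})$ has $\operatorname{id}$ over $S$ (on the appropriate side) equal to $m$, i.e. finite injective dimension. Then I would invoke the standard fact that $\operatorname{Hom}_S(N, J^\bullet)$ is exact for any totally acyclic complex $J^\bullet$ of injectives whenever $N$ has finite injective dimension: dimension-shift $\operatorname{Ext}^k_S(N, \operatorname{Z}_i(J^\bullet))$ up to $\operatorname{Ext}^{k+m+1}_S = 0$ using the cosyzygies of $N$, or dually shift down through $J^\bullet$, exactly as in part (1). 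The main obstacle here is packaging ``$\operatorname{Hom}_R(M, I)$ has finite injective dimension over $S$ for every injective $I$'' cleanly — one must reduce to cogenerator-type injectives and use the relation between $\operatorname{fd} M_S$ and the injective dimension of character modules, which is where care is needed if $R$ and $S$ are noncommutative.

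In both parts the engine is the same dimension-shifting lemma: if a complex $C^\bullet$ of injectives is ($\operatorname{Hom}(E,-)$- or $\operatorname{Hom}(-,E)$-)acyclic and a module $N$ has the relevant finite homological dimension, then applying $\operatorname{Hom}(N, -)$ (resp. $\operatorname{Hom}(-, N)$) preserves exactness, because the obstruction $\operatorname{Ext}^1$ of $N$ against any cosyzygy of $C^\bullet$ can be iteratively pushed into a vanishing range. So the steps in order are: (i) reduce ``$TE^\bullet$ exact'' to ``$\operatorname{Ext}^1(M, \operatorname{Z}_i(E^\bullet)) = 0$''; (ii) prove that vanishing by dimension-shifting using $\operatorname{pd}_R M < \infty$; (iii) for (C2), reduce $T I = \operatorname{Hom}_R(M, I)$ to a character module and deduce finite injective dimension over $S$ from $\operatorname{fd} M_S < \infty$; (iv) apply the analogous dimension-shift inside the totally acyclic complex $J^\bullet$. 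I expect step (iii) — the identification of $\operatorname{Hom}_R(M, I)$ for arbitrary injective $I$ and the bookkeeping of sides in the noncommutative setting — to be the main technical obstacle; everything else is a routine Ext-vanishing argument.
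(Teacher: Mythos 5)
Your part (1) is in substance the paper's proof: dimension shifting along the exact complex of injectives gives $\operatorname{Ext}^1_R(M,\ker d_i)\cong\operatorname{Ext}^{n+1}_R(M,\ker d_{i-n})=0$ once $\operatorname{pd}_RM=n<\infty$, hence $TE^\bullet$ is exact. For part (2) your overall strategy is also the paper's, namely to show that $\operatorname{Hom}_R(M,I)$ has finite injective dimension over $S$ and then use that a module of finite injective dimension has $\operatorname{Ext}^{\geq 1}$ vanishing on the cycles of a totally acyclic complex of injectives; but you reach the first step by a different route, realizing $I$ as a direct summand of $\operatorname{Hom}_{\mathbb{Z}}(R,D)$ for a suitable divisible $D$ and identifying $\operatorname{Hom}_R(M,I)$ with a summand of the character-type module $\operatorname{Hom}_{\mathbb{Z}}(M,D)$, whose injective dimension over $S$ is at most $\operatorname{fd}M_S$. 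This works, but the noncommutative bookkeeping you flag as the main obstacle is avoided entirely by the direct adjunction $\operatorname{Ext}^i_S(N,\operatorname{Hom}_R(M,I))\cong\operatorname{Hom}_R(\operatorname{Tor}^S_i(M,N),I)$, valid because $I$ is $R$-injective, which immediately gives $\operatorname{id}_S\operatorname{Hom}_R(M,I)\leq\operatorname{fd}M_S$; this is the fact the paper simply asserts. One caution on your final step: the vanishing of $\operatorname{Ext}^1_S(TI,\ker\phi_i)$ is not a degree count, since finite injective dimension of $TI$ does not force $\operatorname{Ext}^k_S(TI,-)=0$ for large $k$. The correct argument shifts along an injective coresolution $0\to TI\to E^0\to\cdots\to E^m\to 0$ and uses total acyclicity of $J^\bullet$, i.e. $\operatorname{Ext}^{\geq 1}_S(E,\ker\phi_i)=0$ for every injective $E$ (obtained by shifting inside $J^\bullet$), both for the intermediate isomorphisms and for the terminal vanishing; in particular your alternative of shifting down through $J^\bullet$ ``exactly as in part (1)'' would require $\operatorname{pd}_S(TI)<\infty$ and is not available. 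With that repair made explicit, your argument is complete and coincides with the paper's, which states both of these standard facts without proof.
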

\begin{proof}
(1) Let $E^\bullet=\cdots\rightarrow E_{1}\stackrel{d_{1}}\rightarrow E_0\stackrel{d_0}\rightarrow Q_{-1}\rightarrow\cdots$ be an exact sequence of injective left $R$-modules and pd$_RM=n$. By Dimension Shifting, we have ${\rm Ext}_R^1(M,~{\ker} d_i)={\rm Ext}_R^{n+1}(M,~{\ker} d_{i-n})=0$ for any integer $i$. It follows that $TE^\bullet$ is still exact.

(2) Let {\rm fd}$M_S$ is finite. For any injective left $R$-module $I$, {\rm id$_R$Hom}$(M,~I)$ is finite, since {\rm fd}$M_S$ is finite and $I$ is injective. Then ${\rm Ext}_R^1({\rm Hom}(M,~I),~{\ker} \phi_i)=0$ for any totally acyclic complex of injective $R$-modules $J^\bullet=\cdots\rightarrow J_{1}\stackrel{\phi_{1}}\rightarrow J_0\stackrel{\phi_0}\rightarrow J_{-1}\rightarrow\cdots$.
\end{proof}

\begin{lem}\label{lem:Gorensteiniinj}
Let  $G$ be an object in $\mathcal{B}$ and $L$ an object in $\mathcal{A}$. The following statements hold:
\begin{enumerate}
\item If ${\bf h}(0,G)$ is a Gorenstein injective object, then $G$ is Gorenstein injective.
\item If ${\bf h}(L,0)$ is a Gorenstein injective object, then $L$ is Gorenstein injective.
\end{enumerate}
\end{lem}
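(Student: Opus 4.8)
The plan is to prove both statements simultaneously by exploiting the adjunction $\mathbf{q}\dashv\mathbf{h}$ from Remark~\ref{rmk:functorH} together with the explicit description of totally acyclic complexes. Let me focus first on (1); statement (2) will follow by an entirely parallel argument replacing the role of $\mathbf{h}(0,-)$ by $\mathbf{h}(-,0)$ and the functor extracting the $\mathcal{B}$-component by the functor extracting the $\mathcal{A}$-component.

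For (1), suppose $\mathbf{h}(0,G)=\left(\begin{smallmatrix}0\\G\end{smallmatrix}\right)$ is Gorenstein injective in $(\mathcal{B}\downarrow T)$. Then by definition there is a totally acyclic complex $\mathbf{I}^\bullet$ of injective objects of $(\mathcal{B}\downarrow T)$ with $\mathrm{Z}_0(\mathbf{I}^\bullet)\cong\mathbf{h}(0,G)$. The first step is to recall the structure of injective objects in $(\mathcal{B}\downarrow T)$: since $T$ is left exact and $\mathcal{A},\mathcal{B}$ have enough injectives, every injective object of $(\mathcal{B}\downarrow T)$ is (up to isomorphism) of the form $\left(\begin{smallmatrix}E_A\\ E_B\oplus TE_A\end{smallmatrix}\right)$ for injectives $E_A\in\mathcal{A}$, $E_B\in\mathcal{B}$ — that is, it is of the form $\mathbf{h}(E_A,E_B)$ — and conversely $\mathbf{h}$ preserves injectives (Remark~\ref{rmk:functorH}(1)). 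Applying the component functor $\mathbf{p}:\left(\begin{smallmatrix}A\\B\end{smallmatrix}\right)\mapsto A$ to $\mathbf{I}^\bullet$ gives an exact complex of injectives of $\mathcal{A}$ whose cycle in degree $0$ is $0$; applying instead the functor $\left(\begin{smallmatrix}A\\B\end{smallmatrix}\right)\mapsto B$ gives an acyclic complex of $\mathcal{B}$ whose degree-zero cycle is $G$, and whose terms are $E_B^{(i)}\oplus TE_A^{(i)}$. The second step is to show the resulting complex $\mathbf{J}^\bullet$ with $\mathbf{J}^i=E_B^{(i)}\oplus TE_A^{(i)}$ is genuinely a complex of \emph{injective} objects of $\mathcal{B}$ and is totally acyclic; for this one needs that $TE_A^{(i)}$ is injective (which should follow from the hypotheses available in this section — presumably $T$ is weak cocompatible, or at least $T$ preserves injectives here) and that $\mathrm{Hom}_{\mathcal{B}}(E,\mathbf{J}^\bullet)$ is exact for each injective $E\in\mathcal{B}$.

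The key step, and the one I expect to be the main obstacle, is verifying total acyclicity of $\mathbf{J}^\bullet$ — specifically, that $\mathrm{Hom}_{\mathcal{B}}(E,\mathbf{J}^\bullet)$ is exact for every injective $E\in\mathcal{B}$. The acyclicity of $\mathbf{J}^\bullet$ itself is immediate since the $\mathcal{B}$-component functor is exact and faithful in the relevant sense. But the $\mathrm{Hom}$-exactness must be extracted from the total acyclicity of $\mathbf{I}^\bullet$ in the comma category. The idea is: for an injective $E\in\mathcal{B}$, we have $\left(\begin{smallmatrix}0\\E\end{smallmatrix}\right)$ mapping to $(\mathcal{B}\downarrow T)$, and $\mathrm{Hom}_{(\mathcal{B}\downarrow T)}\!\left(\left(\begin{smallmatrix}0\\E\end{smallmatrix}\right),\left(\begin{smallmatrix}A\\B\end{smallmatrix}\right)_\varphi\right)\cong\mathrm{Hom}_{\mathcal{B}}(E,B)$ naturally, so applying $\mathrm{Hom}_{(\mathcal{B}\downarrow T)}\!\left(\left(\begin{smallmatrix}0\\E\end{smallmatrix}\right),-\right)$ to $\mathbf{I}^\bullet$ recovers exactly $\mathrm{Hom}_{\mathcal{B}}(E,\mathbf{J}^\bullet)$. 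However, $\left(\begin{smallmatrix}0\\E\end{smallmatrix}\right)$ is \emph{not} injective in $(\mathcal{B}\downarrow T)$, so this does not directly give exactness; one must instead use an injective object of $(\mathcal{B}\downarrow T)$ built from $E$, namely $\mathbf{h}(0,E)=\left(\begin{smallmatrix}0\\E\end{smallmatrix}\right)$ — wait, this is again not injective since the structure map is the wrong shape. The honest injective containing the relevant information is $\left(\begin{smallmatrix}0\\E\end{smallmatrix}\right)$ only when $E$ is injective and we view it with zero structure map into $T(0)=0$, and indeed this \emph{is} of the form $\mathbf{h}(0,E)$, hence injective by Remark~\ref{rmk:functorH}(1). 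So $\mathrm{Hom}_{(\mathcal{B}\downarrow T)}(\mathbf{h}(0,E),\mathbf{I}^\bullet)$ is exact by total acyclicity of $\mathbf{I}^\bullet$, and by adjunction (or direct inspection) this complex is isomorphic to $\mathrm{Hom}_{\mathcal{B}}(E,\mathbf{J}^\bullet)$. Hence $\mathbf{J}^\bullet$ is totally acyclic and $G\cong\mathrm{Z}_0(\mathbf{J}^\bullet)$ is Gorenstein injective.

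For (2), the same scheme applies: from $\mathbf{h}(L,0)=\left(\begin{smallmatrix}L\\TL\end{smallmatrix}\right)$ Gorenstein injective one gets a totally acyclic complex $\mathbf{I}^\bullet$ of injectives of $(\mathcal{B}\downarrow T)$ with degree-zero cycle $\mathbf{h}(L,0)$; applying the $\mathcal{A}$-component functor yields a complex $\mathbf{K}^\bullet$ of injectives of $\mathcal{A}$ with $\mathrm{Z}_0(\mathbf{K}^\bullet)\cong L$, and acyclicity is immediate. For total acyclicity one tests against an arbitrary injective $E'\in\mathcal{A}$ via the injective object $\mathbf{h}(E',0)=\left(\begin{smallmatrix}E'\\ TE'\end{smallmatrix}\right)$ of $(\mathcal{B}\downarrow T)$, and checks that $\mathrm{Hom}_{(\mathcal{B}\downarrow T)}(\mathbf{h}(E',0),-)$ applied to $\mathbf{I}^\bullet$ reproduces $\mathrm{Hom}_{\mathcal{A}}(E',\mathbf{K}^\bullet)$ — again by the adjunction $\mathbf{q}\dashv\mathbf{h}$, since $\mathrm{Hom}_{(\mathcal{B}\downarrow T)}(\mathbf{h}(E',0),\left(\begin{smallmatrix}A\\B\end{smallmatrix}\right))\cong\mathrm{Hom}_{\mathcal{A}\times\mathcal{B}}((E',0),\mathbf{q}\left(\begin{smallmatrix}A\\B\end{smallmatrix}\right))=\mathrm{Hom}_{\mathcal{A}}(E',A)$. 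Total acyclicity of $\mathbf{I}^\bullet$ then gives exactness of this complex, so $L=\mathrm{Z}_0(\mathbf{K}^\bullet)$ is Gorenstein injective. The one point requiring care throughout is the classification of injectives in $(\mathcal{B}\downarrow T)$ and that $T$ sends the injectives appearing to injectives of $\mathcal{B}$ (so that the component complexes really consist of injectives); this is where the cocompatibility-type hypotheses of the section, or the standing assumption that $\mathbf{h}$ preserves injectives, enter.
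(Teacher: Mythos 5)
Your overall skeleton (injectives of $(\mathcal{B}\downarrow T)$ have the form $\mathbf{h}(I,E)$; acyclicity and the identification of the degree-zero cycle follow from componentwise exactness; total acyclicity is tested against the injectives $\mathbf{h}(0,X)$, resp.\ $\mathbf{h}(Y,0)$) is the same as the paper's, but the two Hom-identifications you rest on are wrong, and in (1) you additionally import a hypothesis the lemma does not have. For (1) you pass to the full $\mathcal{B}$-component complex $\mathbf{J}^\bullet$ with terms $E_B^{(i)}\oplus TE_A^{(i)}$, and you yourself note that this forces you to assume $TE_A^{(i)}$ injective (``presumably $T$ is weak cocompatible''). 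But the lemma is stated for an arbitrary left exact $T$; the cocompatibility conditions enter only in the converse direction (\lemref{W1C2}, \prpref{compatible}), so they are not available, and without them $\mathbf{J}^\bullet$ need not be a complex of injectives. Worse, the identification you use to get total acyclicity is false: a morphism $\mathbf{h}(0,E)=\left(\begin{smallmatrix}0\\E\end{smallmatrix}\right)\to\mathbf{h}(I,E')=\left(\begin{smallmatrix}I\\E'\oplus TI\end{smallmatrix}\right)$ must have zero composite with the projection onto $TI$ (the commutativity constraint with $T(0\to I)$), so $\mathrm{Hom}_{(\mathcal{B}\downarrow T)}(\mathbf{h}(0,E),\mathbf{h}(I,E'))\cong\mathrm{Hom}_{\mathcal{B}}(E,E')$, not $\mathrm{Hom}_{\mathcal{B}}(E,E'\oplus TI)$. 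Hence applying $\mathrm{Hom}_{(\mathcal{B}\downarrow T)}(\mathbf{h}(0,E),-)$ to $\mathbf{I}^\bullet$ computes $\mathrm{Hom}_{\mathcal{B}}(E,-)$ of the complex formed by the summands $E_B^{(i)}$ alone, not of your $\mathbf{J}^\bullet$. This is precisely the point of the paper's argument: the complex it extracts consists only of the $E_B^{(i)}$, which are injective in $\mathcal{B}$ with no assumption on $T$, and its degree-zero cycle is $G$ because the structure map of $\left(\begin{smallmatrix}0\\G\end{smallmatrix}\right)$ being zero forces $G$ to land inside the summand $E_B^{(0)}$. Your route proves at best a weaker statement, and even granting that $T$ preserves injectives, the total-acyclicity step does not go through as written.

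Part (2) has a parallel defect: you apply the adjunction backwards. Since $\mathbf{q}$ is \emph{left} adjoint to $\mathbf{h}$ (\rmkref{functorH}), the isomorphism is $\mathrm{Hom}_{(\mathcal{B}\downarrow T)}(Z,\mathbf{h}(A,B))\cong\mathrm{Hom}_{\mathcal{A}\times\mathcal{B}}(\mathbf{q}Z,(A,B))$; your formula $\mathrm{Hom}_{(\mathcal{B}\downarrow T)}\bigl(\mathbf{h}(E',0),\left(\begin{smallmatrix}A\\B\end{smallmatrix}\right)\bigr)\cong\mathrm{Hom}_{\mathcal{A}\times\mathcal{B}}\bigl((E',0),\mathbf{q}\left(\begin{smallmatrix}A\\B\end{smallmatrix}\right)\bigr)=\mathrm{Hom}_{\mathcal{A}}(E',A)$ is neither the adjunction nor true: a morphism $\left(\begin{smallmatrix}E'\\TE'\end{smallmatrix}\right)\to\left(\begin{smallmatrix}A\\B\end{smallmatrix}\right)_{\varphi}$ is a pair $(f,g)$ subject to $\varphi g=Tf$. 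On the injective terms the correct computation, via the actual adjunction, is $\mathrm{Hom}_{(\mathcal{B}\downarrow T)}(\mathbf{h}(E',0),\mathbf{h}(I_i,E_i))\cong\mathrm{Hom}_{\mathcal{A}}(E',I_i)\times\mathrm{Hom}_{\mathcal{B}}(TE',E_i)$, which is what the paper uses to deduce that the $\mathcal{A}$-component complex $I^\bullet$ is $\mathrm{Hom}_{\mathcal{A}}(E',-)$-exact and hence that $L=\mathrm{Z}_0(I^\bullet)$ is Gorenstein injective. So while your plan follows the paper's strategy in outline, the concrete Hom-computations that carry the argument are incorrect, and the proposal as written does not establish the lemma.
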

\begin{proof}
(1) Let ${\bf h}(0,G)$ be a Gorenstein injective object. There is a totally acyclic complex
$${{\bf h}(I^\bullet,~E^\bullet)}=\cdots\rightarrow{\bf h}(I_{1},~E_{1}){\rightarrow} {\bf h}(I_{0},~E_{0})\rightarrow
{\bf h}(I_{-1},~E_{-1})\rightarrow\cdots$$
of injective objects in $(\mathcal{B}\downarrow T)$ with $Z_0({{\bf h}(I^\bullet,~E^\bullet)})={\bf h}(0,~G)$. By definition ${{\bf h}(I^\bullet,~E^\bullet)}$ is ${\rm Hom}_{(\mathcal{B}\downarrow T)}({{\bf h}(0,~X)},~-)$-exact for each injective object $X$ in $\mathcal{B}$. Now one has
$$\begin{array}{lll}{\rm Hom}_{(\mathcal{B}\downarrow T)}({\bf h}(0,~X),~{\bf h}(I^\bullet,~E^\bullet))&\simeq{\rm Hom}_{\mathcal{A}\times\mathcal{B}}({\bf q}({\bf h}(0,~X)),~(I^\bullet,~E^\bullet))&\\&={\rm Hom}_{\mathcal{B}}(X,~E^\bullet)&\end{array}$$
which implies that $E^\bullet=\cdots\rightarrow E_{1}\rightarrow E_0\rightarrow E_{-1}\rightarrow\cdots$ is ${\rm Hom}_{\mathcal{B}}(X,-)$-exact. Clearly, $G=Z_0(E^\bullet)$ and $G$ is Gorenstein injective.

(2) Let ${\bf h}(L,~0)$ be a Gorenstein injective object. Then there is a totally acyclic complex
$${{\bf h}(J^\bullet,~F^\bullet)}=\cdots\rightarrow{\bf h}(J_{1},~F_{1}){\rightarrow} {\bf h}(J_{0},~F_{0})\rightarrow
{\bf h}(J_{-1},~F_{-1})\rightarrow\cdots$$
of injective objects in $(\mathcal{B}\downarrow T)$ with $Z_0({{\bf h}(J^\bullet,~F^\bullet)})={\bf h}(L,~0)$. By definition ${{\bf h}(J^\bullet,F^\bullet)}$ is ${\rm Hom}_{(\mathcal{B}\downarrow T)}({\bf h}(Y,~0),~-)$-exact for each injective object $Y$ in $\mathcal{A}$. Then we have $$\begin{array}{lll}{\rm Hom}_{(\mathcal{B}\downarrow T)}({\bf h}(Y,~0),~{\bf h}(J^\bullet,~F^\bullet))&\simeq{\rm Hom}_{\mathcal{A}\times\mathcal{B}}({\bf q}({\bf h}(Y,~0)),(J^\bullet,~F^\bullet))&\\&\simeq {\rm Hom}_\mathcal{A}(Y,~J^\bullet)\times{\rm Hom}_\mathcal{B}(TY,~F^\bullet)&\end{array}$$
which implies that $J^\bullet=\cdots\rightarrow J_{1}\rightarrow J_0\rightarrow J_{-1}\rightarrow\cdots$ is ${\rm Hom}_{\mathcal{A}}(Y,-)$-exact. Clearly, $L=Z_0({J}^\bullet)$ and $L$ is Gorenstein injective.
\end{proof}

\begin{lem}\label{lem:W1C2}
For the comma category  $(\mathcal{B}\downarrow T)$, one has
\begin{enumerate}
\item $T$ satisfies (W1) if and only if ${\bf h}(G,0)$ is a Gorenstein injective object for each Gorenstein injective object $G$ in $\mathcal{A}$.
\item $T$ satisfies (C2) if and only if ${\bf h}(0,L)$ is a Gorenstein injective object for each Gorenstein injective object $L$ in $\mathcal{B}$.
\end{enumerate}
\end{lem}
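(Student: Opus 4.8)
I would prove the two biconditionals separately, in each case exploiting the adjunction $(\mathbf{q},\mathbf{h})$ from \rmkref{functorH} together with the explicit recipe for injective objects in $(\mathcal{B}\downarrow T)$. Recall that every injective object of $(\mathcal{B}\downarrow T)$ has the form $\mathbf{h}(I,E)=\left(\begin{smallmatrix}I\\E\oplus TI\end{smallmatrix}\right)$ with $I$ injective in $\mathcal{A}$ and $E$ injective in $\mathcal{B}$; this is the structural fact that makes everything go. The strategy in both parts is: given a totally acyclic complex of injectives on one side, apply $\mathbf{h}$ componentwise to get a complex of injectives in $(\mathcal{B}\downarrow T)$ that is degreewise split-exact in the ``injective-summand'' directions, and then check the two defining conditions of total acyclicity (ordinary exactness, and $\operatorname{Hom}_{(\mathcal{B}\downarrow T)}(\text{injective},-)$-exactness) by reducing them, via the adjunction isomorphisms already computed in \lemref{Gorensteiniinj}, to conditions (W1), (C2) respectively together with the total acyclicity one started with.

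\textbf{Part (1).} For the ``if'' direction, suppose $\mathbf{h}(G,0)$ is Gorenstein injective whenever $G$ is; given a totally acyclic complex $E^\bullet$ of injectives in $\mathcal{A}$ with cycles $Z_0(E^\bullet)=:G$, the object $G$ is Gorenstein injective, so $\mathbf{h}(G,0)$ is Gorenstein injective. Now I use that a totally acyclic complex of injectives realizing $\mathbf{h}(G,0)$ as $Z_0$ can be taken to be $\mathbf{h}(E^\bullet,0)$ itself — more precisely, one checks $\mathbf{h}$ sends the injective resolution/coresolution of $G$ to one of $\mathbf{h}(G,0)$, using that $\mathbf{h}$ is exact on the split-exact pieces and preserves injectives; exactness of the complex $\mathbf{h}(E^\bullet,0)=\left(\begin{smallmatrix}E^\bullet\\TE^\bullet\end{smallmatrix}\right)$ in $(\mathcal{B}\downarrow T)$ forces $TE^\bullet$ to be exact in $\mathcal{B}$, which is exactly (W1). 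For the ``only if'' direction, assume (W1): given Gorenstein injective $G$ in $\mathcal{A}$ with totally acyclic $J^\bullet$, $Z_0(J^\bullet)=G$, form $\mathbf{h}(J^\bullet,0)$. It is a complex of injectives in $(\mathcal{B}\downarrow T)$; its exactness follows because the $\mathcal{A}$-row $J^\bullet$ is exact and the $\mathcal{B}$-row $TJ^\bullet$ is exact by (W1) (and the relevant short exact sequences in $(\mathcal{B}\downarrow T)$ assemble from these two rows as in the diagrams of \prpref{cotorsionpair}). For the $\operatorname{Hom}$-exactness: by \rmkref{functorH}(1) every injective of $(\mathcal{B}\downarrow T)$ is a summand of some $\mathbf{h}(I,E)=\mathbf{h}(I,0)\oplus\mathbf{h}(0,E)$, so it suffices to test against $\mathbf{h}(I,0)$ and $\mathbf{h}(0,E)$. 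For $\mathbf{h}(I,0)$ the adjunction gives $\operatorname{Hom}_{(\mathcal{B}\downarrow T)}(\mathbf{h}(I,0),\mathbf{h}(J^\bullet,0))\simeq\operatorname{Hom}_{\mathcal{A}}(I,J^\bullet)\times\operatorname{Hom}_{\mathcal{B}}(TI,TJ^\bullet)$; the first factor is exact since $J^\bullet$ is totally acyclic, and the second is exact since $TJ^\bullet$ is an exact (by (W1)) complex of injectives and $TI$ is injective — wait, $TI$ need not be injective, so here one instead argues that $TJ^\bullet$ being the image under $T$ of a contractible-after-$\operatorname{Hom}(-,\text{inj})$ complex still has the needed property; the cleanest route is the one used in \lemref{Gorensteiniinj}(2), reading the adjunction isomorphism in the other direction. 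For $\mathbf{h}(0,E)$ one gets $\operatorname{Hom}_{(\mathcal{B}\downarrow T)}(\mathbf{h}(0,E),\mathbf{h}(J^\bullet,0))\simeq\operatorname{Hom}_{\mathcal{B}}(E,TJ^\bullet)$, exact again by (W1) since $TJ^\bullet$ is an exact complex and $E$ is injective. Part (2) is entirely parallel, with the roles reversed: one builds $\mathbf{h}(0,J^\bullet)=\left(\begin{smallmatrix}0\\J^\bullet\end{smallmatrix}\right)$, exactness is automatic, preservation of injectives is \rmkref{functorH}(1), and the only nontrivial $\operatorname{Hom}$-test is $\operatorname{Hom}_{(\mathcal{B}\downarrow T)}(\mathbf{h}(I,0),\mathbf{h}(0,J^\bullet))\simeq\operatorname{Hom}_{\mathcal{B}}(TI,J^\bullet)$, whose exactness for all injective $I$ in $\mathcal{A}$ and all totally acyclic complexes of injectives $J^\bullet$ in $\mathcal{B}$ is precisely condition (C2) — and the converse direction just runs this equivalence backwards, using \lemref{Gorensteiniinj}(1) to pass from $\mathbf{h}(0,G)$ Gorenstein injective back to $G$ Gorenstein injective.

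\textbf{Main obstacle.} The delicate point is the ``only if'' directions, where one must verify that $\mathbf{h}(J^\bullet,-)$ is genuinely a \emph{totally acyclic} complex of injectives — not merely an acyclic one — and in particular that ordinary exactness of the assembled complex in $(\mathcal{B}\downarrow T)$ holds. Exactness in the comma category is not formal: one must check it on both the $\mathcal{A}$- and $\mathcal{B}$-components, and the $\mathcal{B}$-component in Part (1) is exactly where (W1) is consumed (and only (W1), not the stronger cocompatibility). One should be careful that Part (1) uses (W1) rather than (C1): the complex $E^\bullet$ in the definition of Gorenstein injectivity of $G$ is totally acyclic, not an arbitrary exact complex, so (W1) is the precisely-tailored hypothesis. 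Assembling the relevant short exact sequences in $(\mathcal{B}\downarrow T)$ from the two component rows — and invoking \prpref{hxyD}-style snake-lemma bookkeeping to see the cycles match up, i.e. $Z_0(\mathbf{h}(J^\bullet,0))=\mathbf{h}(Z_0(J^\bullet),0)=\mathbf{h}(G,0)$ — is the one place where genuine (if routine) diagram-chasing is unavoidable.
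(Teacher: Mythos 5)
Your ``only if'' directions (deducing Gorenstein injectivity of $\mathbf{h}(G,0)$ from (W1), and of $\mathbf{h}(0,L)$ from (C2)) follow the paper's route, but your adjunction bookkeeping is off: since $\mathbf{q}$ is left adjoint to $\mathbf{h}$, a map into $\mathbf{h}(J_n,0)$ is a pair of maps into $(J_n,0)$, so $\operatorname{Hom}_{(\mathcal{B}\downarrow T)}(\mathbf{h}(I,E),\mathbf{h}(J_n,0))\cong\operatorname{Hom}_{\mathcal{A}}(I,J_n)$ --- there is no second factor $\operatorname{Hom}_{\mathcal{B}}(TI,TJ_n)$, and $\operatorname{Hom}(\mathbf{h}(0,E),\mathbf{h}(J_n,0))=0$. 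With the correct formula the Hom-test needs only total acyclicity of $J^\bullet$, your worry about $TI$ not being injective disappears, and (W1) is consumed solely for exactness of the row $TJ^\bullet$; as written, though, your justification of the ``second factor'' (exactness of $\operatorname{Hom}_{\mathcal{B}}(E,TJ^\bullet)$ because $TJ^\bullet$ is exact and $E$ injective) would not be valid even on its own terms.

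The genuine gap is in your ``if'' direction of (1), i.e.\ deducing (W1) from the hypothesis that $\mathbf{h}(G,0)$ is Gorenstein injective for every Gorenstein injective $G$. The hypothesis only gives that $\mathbf{h}(G,0)$ is a cycle of \emph{some} totally acyclic complex of injectives in $(\mathcal{B}\downarrow T)$; you instead assert that $\mathbf{h}(E^\bullet,0)$ itself may be taken as that complex, ``using that $\mathbf{h}$ is exact on the split-exact pieces''. But $E^\bullet$ is not split exact (its cycles are Gorenstein injective, not injective), and $\mathbf{h}$, equivalently $T$, is only left exact; exactness of $\mathbf{h}(E^\bullet,0)=\left(\begin{smallmatrix}E^\bullet\\TE^\bullet\end{smallmatrix}\right)$ is precisely the statement that $TE^\bullet$ is exact, i.e.\ (W1). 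So your argument assumes what it must prove, and since the purported check never uses the hypothesis, if it were valid it would show every left exact $T$ satisfies (W1), which is false. The paper fills exactly this hole with a comparison argument: Gorenstein injectivity of $\mathbf{h}(G_0,0)$ supplies an exact sequence $0\to\left(\begin{smallmatrix}G_A\\G_B\end{smallmatrix}\right)\to\left(\begin{smallmatrix}E_A\\E_B\end{smallmatrix}\right)\to\left(\begin{smallmatrix}G_0\\TG_0\end{smallmatrix}\right)\to0$ with injective middle term and Gorenstein injective kernel; comparing it with $0\to G_1\to E_1\to G_0\to0$ via a lifting $E_A\to E_1$ over $G_0$ and applying $T$, the epimorphism $E_B\to TG_0$ factors through $TE_A\to TE_1\to TG_0$, forcing $TE_1\to TG_0$ to be epic, and repeating at every cycle of $E^\bullet$ yields exactness of $TE^\bullet$. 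The same ``some complex versus this complex'' issue affects your converse in (2): knowing $\mathbf{h}(0,L)$ is Gorenstein injective does not say $\mathbf{h}(0,J^\bullet)$ is totally acyclic, and ``running the equivalence backwards'' needs an extra step, e.g.\ that $\operatorname{Ext}^1_{(\mathcal{B}\downarrow T)}\bigl(\mathbf{h}(I,0),\left(\begin{smallmatrix}0\\Z_n(J^\bullet)\end{smallmatrix}\right)\bigr)=0$ makes every extension $0\to Z_n(J^\bullet)\to B\to TI\to0$ split, which, since all cycles of $J^\bullet$ are Gorenstein injective, gives exactness of $\operatorname{Hom}_{\mathcal{B}}(TI,J^\bullet)$.
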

\begin{proof}
(1)$``\Leftarrow"$. Assume that $E^\bullet=\cdots\rightarrow E_{1}\stackrel{d_1}\rightarrow E_0\stackrel{d_0}\rightarrow E_{-1}\rightarrow\cdots$ is a totally acyclic complex of injective objects in $\mathcal{A}$ and  $G_0=Z_0(E^\bullet)$. There is an exact sequence $0\rightarrow G_1\rightarrow E_1\rightarrow G_0\rightarrow0$ with $E_0$ injective and $G_1$ Gorenstein injective. Consequently, ${\bf h}(G_0,0)$ is Gorenstein injective in $(\mathcal{B}\downarrow T)$ by hypothesis. Thus there is an exact sequence $0\rightarrow\left(\begin{smallmatrix}G_A\\G_B\end{smallmatrix}\right){\rightarrow} \left(\begin{smallmatrix}E_A\\E_B\end{smallmatrix}\right)\rightarrow\left(\begin{smallmatrix}G_0\\TG_0\end{smallmatrix}\right)\rightarrow0$ in $(\mathcal{B}\downarrow T)$ with $\left(\begin{smallmatrix}E_A\\E_B\end{smallmatrix}\right)$ injective and $\left(\begin{smallmatrix}G_A\\G_B\end{smallmatrix}\right)$ Gorenstein injective. Since $E_A$ is injective, we have the following commutative diagram with exact rows:
$$\xymatrix{
0\ar[r]&G_A\ar[r]\ar[d]&E_A\ar[r]\ar[d]&G_0\ar[r]\ar@{=}[d]&0\\
    0\ar[r]&G_1\ar[r]&E_1\ar[r]&G_0\ar[r]&0.\\
    }$$
Therefore, we obtain the following exact commutative diagram
$$\xymatrix{
0\ar[r]&G_B\ar[r]\ar[d]&E_B\ar[r]\ar[d]&TG_0\ar[r]\ar@{=}[d]&0\\
0\ar[r]&TG_A\ar[r]\ar@{=}[d]&TE_A\ar[r]\ar[d]&TG_0\ar@{=}[d]&\\
0\ar[r]&TG_1\ar[r]&TE_1\ar^{Tf}[r]&TG_0&\\
    }$$
which implies that $Tf$ is surjective, as required.

$``\Rightarrow"$. Let $G$ be a Gorenstein injective object in $\mathcal{A}$. Then there is a totally acyclic complex of injective objects $I^\bullet=\cdots\rightarrow I_{1}\stackrel{d_1}\rightarrow I_0\stackrel{d_0}\rightarrow I_{-1}\rightarrow\cdots$ in $\mathcal{A}$ such that $G={\ker} d_0$. By hypothesis,
$${\bf h}(I^\bullet,0)=\cdots\longrightarrow{\bf h}(I_{1},0)\xlongrightarrow {{\bf h}(d_{1},0)} {\bf h}(I_{0},0)\xlongrightarrow {{\bf h}(d_{0},0)}  {\bf h}(I_{-1},0)\longrightarrow\cdots$$
is an exact sequence of injective objects in$(\mathcal{B}\downarrow T)$. Then we have
$${\rm Hom}_{(\mathcal{B}\downarrow T)}({\bf h}(J,F),{\bf h}(I^\bullet,0))\simeq{\rm Hom}_{\mathcal{A}\times\mathcal{B}}({\bf q}({\bf h}(J,F)),(I^\bullet,0))={\rm Hom}_{\mathcal{A}}(J,I^\bullet)$$
for each injective object ${\bf h}(J,F)$ in $(\mathcal{B}\downarrow T)$. Thus, ${\bf h}(I^\bullet,0)$ is totally acyclic. Then ${\bf h}(G,0)$ is a Gorenstein injective object by ${\bf h}(G,0)={\ker}\,\textbf{h}(d_{0},0)$.

(2) Let $F^\bullet=\cdots\rightarrow F_{1}\stackrel{\tau_{1}}\rightarrow F_0\stackrel{\tau_0}\rightarrow F_{-1}\rightarrow\cdots$ be a totally acyclic complex of injective objects in $\mathcal{B}$. Then there exists an exact sequence
$${\bf h}(0,F^\bullet)=\cdots\longrightarrow{\bf h}(0,F_{1}) \xlongrightarrow {{\bf h}(0,\tau_{1})} {\bf h}(0,F_{0}) \xlongrightarrow {{\bf h}(0,\tau_{0})}
{\bf h}(0,F_{-1}) \longrightarrow\cdots$$
in $(\mathcal{B}\downarrow T)$ with each term injective. Then one has
\begin{align*}
{\rm Hom}_{(\mathcal{B}\downarrow T)}({\bf h}(0,~F^\bullet),~{\bf h}(X,~Y))&\simeq{\rm Hom}_{\mathcal{A}\times\mathcal{B}}({\bf q}({\bf h}(X,~Y)),~(0,~F^\bullet)) \\&={\rm Hom}_{\mathcal{B}}(Y\oplus TX,~F^{\bullet})
\end{align*}
for each injective object ${\bf h}(X,~Y)$ in $(\mathcal{B}\downarrow T)$.
It follows that ${\rm Hom}_{\mathcal{B}}(TX,~F^{\bullet})$ is exact for each injective object $X$ in $\mathcal{A}$ if and only if ${\bf h}(0,~F^\bullet)$ is a totally acyclic complex of injective objects in  $(\mathcal{B}\downarrow T)$ for each totally acyclic complex of injective objects $F^\bullet$ in $\mathcal{B}$.
\end{proof}

\begin{prp}\label{prp:compatible}
Let $(\mathcal{B}\downarrow T)$ be a comma category. Then $\langle{\bf h}(\mathcal{GI}_\mathcal{A},~\mathcal {GI}_\mathcal{B})\rangle\subseteq\mathcal{GI}_{(\mathcal{B}\downarrow T)}$ if and only if $T$ is weak cocompatible. Moreover, if $T$ is cocompatible, then
$\mathcal {GI}_{(\mathcal{B}\downarrow T)}=\langle{\bf h}(\mathcal {GI}_\mathcal{A},~\mathcal {GI}_\mathcal{B})\rangle$.
\end{prp}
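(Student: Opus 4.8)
The plan is to read everything off from \lemref{W1C2} and \rmkref{functorH}, together with the description of the injective objects of $(\mathcal{B}\downarrow T)$ as exactly the ${\bf h}(I,E)$ with $I$, $E$ injective (see \cite{MR389981} and \rmkref{functorH}) and the standard facts that Gorenstein injective objects are closed under extensions and under (finite) direct sums. For the equivalence: if $T$ is weak cocompatible, then \lemref{W1C2} gives ${\bf h}(G,0)\in\mathcal{GI}_{(\mathcal{B}\downarrow T)}$ for every $G\in\mathcal{GI}_\mathcal{A}$ and ${\bf h}(0,L)\in\mathcal{GI}_{(\mathcal{B}\downarrow T)}$ for every $L\in\mathcal{GI}_\mathcal{B}$; since ${\bf h}(G,L)={\bf h}(G,0)\oplus{\bf h}(0,L)$ by \rmkref{functorH}, we get ${\bf h}(\mathcal{GI}_\mathcal{A},\mathcal{GI}_\mathcal{B})\subseteq\mathcal{GI}_{(\mathcal{B}\downarrow T)}$, and as the latter is extension-closed it contains the extension closure $\langle{\bf h}(\mathcal{GI}_\mathcal{A},\mathcal{GI}_\mathcal{B})\rangle$. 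Conversely, if $\langle{\bf h}(\mathcal{GI}_\mathcal{A},\mathcal{GI}_\mathcal{B})\rangle\subseteq\mathcal{GI}_{(\mathcal{B}\downarrow T)}$, then since the zero object is Gorenstein injective in both $\mathcal{A}$ and $\mathcal{B}$ we have ${\bf h}(G,0),{\bf h}(0,L)\in{\bf h}(\mathcal{GI}_\mathcal{A},\mathcal{GI}_\mathcal{B})\subseteq\mathcal{GI}_{(\mathcal{B}\downarrow T)}$ for all $G\in\mathcal{GI}_\mathcal{A}$, $L\in\mathcal{GI}_\mathcal{B}$, whence $T$ satisfies (W1) and (C2) by \lemref{W1C2}; so $T$ is weak cocompatible.

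For the ``moreover'', observe that (C1) implies (W1) (a totally acyclic complex of injectives is exact), so a cocompatible $T$ is weak cocompatible and the inclusion $\langle{\bf h}(\mathcal{GI}_\mathcal{A},\mathcal{GI}_\mathcal{B})\rangle\subseteq\mathcal{GI}_{(\mathcal{B}\downarrow T)}$ is already known; it remains to prove $\mathcal{GI}_{(\mathcal{B}\downarrow T)}\subseteq\langle{\bf h}(\mathcal{GI}_\mathcal{A},\mathcal{GI}_\mathcal{B})\rangle$. Let $\left(\begin{smallmatrix}A\\B\end{smallmatrix}\right)_{\varphi}=\Cy[0]{\mathbf{I}^{\bullet}}$ with $\mathbf{I}^{\bullet}$ a totally acyclic complex of injectives in $(\mathcal{B}\downarrow T)$, and write $\mathbf{I}_n={\bf h}(I_n,E_n)$ with $I_n$, $E_n$ injective. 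On the $\mathcal{B}$-component $E_n\oplus TI_n$ the differential is lower triangular, so the subobjects $E_n$ assemble into a subcomplex with quotient complex ${\bf h}(I^{\bullet},0)$ (with $n$-th term ${\bf h}(I_n,0)$); this yields a termwise-split short exact sequence of complexes
\[
0\longrightarrow{\bf h}(0,E^{\bullet})\longrightarrow\mathbf{I}^{\bullet}\longrightarrow{\bf h}(I^{\bullet},0)\longrightarrow 0
\]
in $(\mathcal{B}\downarrow T)$. Since kernels and cokernels in $(\mathcal{B}\downarrow T)$ are computed componentwise, $I^{\bullet}$ is an exact complex of injectives in $\mathcal{A}$; hence $TI^{\bullet}$ is exact by (C1), so ${\bf h}(I^{\bullet},0)$ is exact, and then the displayed sequence forces $E^{\bullet}$ to be exact as well. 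Using ${\bf q}\dashv{\bf h}$ (\rmkref{functorH}) and the injectivity of ${\bf h}(I,0)$ and ${\bf h}(0,J)$, one checks that $\Hom{\mathcal{A}}{I}{I^{\bullet}}$ is a direct summand of the acyclic complex $\Hom{(\mathcal{B}\downarrow T)}{{\bf h}(I,0)}{\mathbf{I}^{\bullet}}$ and that $\Hom{\mathcal{B}}{J}{E^{\bullet}}\cong\Hom{(\mathcal{B}\downarrow T)}{{\bf h}(0,J)}{\mathbf{I}^{\bullet}}$ (the ${\bf h}(I^{\bullet},0)$-contribution vanishes), which is acyclic; thus $I^{\bullet}$ and $E^{\bullet}$ are totally acyclic in $\mathcal{A}$ and $\mathcal{B}$, respectively. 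Setting $A=\Cy[0]{I^{\bullet}}\in\mathcal{GI}_\mathcal{A}$ and $G_B=\Cy[0]{E^{\bullet}}\in\mathcal{GI}_\mathcal{B}$ and passing to degree-zero cycles in the displayed sequence — where the connecting map vanishes because ${\bf h}(0,E^{\bullet})$ is acyclic — produces an exact sequence
\[
0\longrightarrow{\bf h}(0,G_B)\longrightarrow\left(\begin{smallmatrix}A\\B\end{smallmatrix}\right)_{\varphi}\longrightarrow{\bf h}(A,0)\longrightarrow 0
\]
in $(\mathcal{B}\downarrow T)$ whose outer terms lie in ${\bf h}(\mathcal{GI}_\mathcal{A},\mathcal{GI}_\mathcal{B})$; closure of $\langle{\bf h}(\mathcal{GI}_\mathcal{A},\mathcal{GI}_\mathcal{B})\rangle$ under extensions then puts $\left(\begin{smallmatrix}A\\B\end{smallmatrix}\right)_{\varphi}$ in $\langle{\bf h}(\mathcal{GI}_\mathcal{A},\mathcal{GI}_\mathcal{B})\rangle$.

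I expect the main obstacle to be precisely this last step: a complex of injectives in $(\mathcal{B}\downarrow T)$ is not literally of the diagonal form ${\bf h}(I^{\bullet},E^{\bullet})$ — its differentials are only lower triangular — so the $\mathcal{A}$- and $\mathcal{B}$-parts cannot simply be split off, and the exactness of the ``$E$-part'' $E^{\bullet}$ is not formal but is exactly where hypothesis (C1) is consumed; one must also carefully propagate total acyclicity through the termwise-split sequence via the adjunction ${\bf q}\dashv{\bf h}$. The remaining ingredients — the componentwise computation of kernels and cokernels in $(\mathcal{B}\downarrow T)$, and the closure of the relevant classes of Gorenstein injective objects under extensions and finite direct sums — are routine.
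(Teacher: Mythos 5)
Your first paragraph (the ``if and only if'') is correct and is exactly the paper's argument: both directions come from \lemref{W1C2} together with the decomposition ${\bf h}(G,L)={\bf h}(G,0)\oplus{\bf h}(0,L)$ and closure of Gorenstein injectives under finite sums and extensions. Your plan for the ``moreover'' also follows the paper's route (split the totally acyclic complex $\mathbf{I}^\bullet$ into an $E$-part and an $I$-part, use (C1) to get $TI^\bullet$ exact, transfer total acyclicity, and finish by extension-closure); indeed you are more careful than the paper, which writes the differentials in the diagonal form ${\bf h}(d_i,\delta_i)$ even though, as you note, they are only triangular.

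However, there is one genuine gap, at precisely the delicate step: the claim that $\Hom{\mathcal{A}}{I}{I^\bullet}$ is a \emph{direct summand} of $\Hom{(\mathcal{B}\downarrow T)}{{\bf h}(I,0)}{\mathbf{I}^\bullet}$ is false in general. A differential ${\bf h}(I_n,E_n)\to{\bf h}(I_{n-1},E_{n-1})$ has, besides the entries $e_n\colon E_n\to E_{n-1}$ and $Td_n$, an arbitrary component $s_n\colon TI_n\to E_{n-1}$, so applying $\Hom{(\mathcal{B}\downarrow T)}{{\bf h}(I,0)}{-}$ to your termwise-split sequence only yields a short exact sequence of complexes
\[
0\longrightarrow\Hom{\mathcal{B}}{TI}{E^\bullet}\longrightarrow\Hom{(\mathcal{B}\downarrow T)}{{\bf h}(I,0)}{\mathbf{I}^\bullet}\longrightarrow\Hom{\mathcal{A}}{I}{I^\bullet}\longrightarrow 0,
\]
whose middle differential sends $(\alpha,\beta)$ to $(d_n\alpha,\,e_n\beta+s_nT\alpha)$; the mixing term $s_nT\alpha$ destroys the splitting, and a quotient of an acyclic complex need not be acyclic, so total acyclicity of $I^\bullet$ (hence $A\in\mathcal{GI}_\mathcal{A}$) is not yet established. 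The tell-tale sign is that your ``moreover'' argument never uses (C2), although it is part of the cocompatibility hypothesis and is genuinely needed here: since you have already shown that $E^\bullet$ is a totally acyclic complex of injectives in $\mathcal{B}$, condition (C2) of \dfnref{cocompatible} gives that the subcomplex $\Hom{\mathcal{B}}{TI}{E^\bullet}$ is acyclic, and then the long exact homology sequence of the displayed sequence, combined with the acyclicity of the middle term, shows $\Hom{\mathcal{A}}{I}{I^\bullet}$ is acyclic. With this repair the rest of your argument (the ${\bf h}(0,J)$ computation, exactness of $E^\bullet$ via (C1), the degree-zero-cycle sequence $0\to{\bf h}(0,G_B)\to\left(\begin{smallmatrix}A\\B\end{smallmatrix}\right)_\varphi\to{\bf h}(A,0)\to0$ and extension-closure of $\langle{\bf h}(\mathcal{GI}_\mathcal{A},\mathcal{GI}_\mathcal{B})\rangle$) goes through and recovers the paper's conclusion, which the paper itself reaches via \prpref{hxyD}.
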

\begin{proof} By Lemmas \ref{lem:Gorensteiniinj} and \ref{lem:W1C2}, $\langle{\bf h}(\mathcal {GI}_\mathcal{A},~\mathcal {GI}_\mathcal{B})\rangle\subseteq\mathcal{GI}_{(\mathcal{B}\downarrow T)}$ is satisfied. It is sufficient to show $\mathcal {GI}_{(\mathcal{B}\downarrow T)}\subseteq\langle{\bf h}(\mathcal {GI}_\mathcal{A},~\mathcal {GI}_\mathcal{B})\rangle$ provided that $T$ is cocompatible.

 Assume that $\left(\begin{smallmatrix}H\\G\end{smallmatrix}\right)_\varphi\in\mathcal {GI}_{(\mathcal{B}\downarrow T)}$ and there is a totally acyclic complex
$${\bf h}(I^\bullet,~E^\bullet)=\cdots\longrightarrow{\bf h}(I_{1},~E_{1})\xlongrightarrow {{\bf h}(d_{1},~\delta_{1})} {\bf h}(I_{0},~E_0)\xlongrightarrow {{\bf h}(d_{0},~\delta_0)}
{\bf h}(P_{-1},~E_{-1})\longrightarrow\cdots$$ of injective objects
in $(\mathcal{B}\downarrow T)$ with $\left(\begin{smallmatrix}H\\G\end{smallmatrix}\right)_\varphi = {\ker}~{\bf h}(d_{0},\delta_0)$. Then $I^\bullet$ is an exact sequence  of injective objects in $\mathcal{A}$, and $TI^\bullet$ is exact since $T$ is cocompatible. Thus we have the following exact commutative diagram:
$$\xymatrix{
0\ar[r]&G_i\ar[r]\ar[d]^{\varphi_i}&E_i\oplus TI_i\ar[r]\ar[d]^{{(0,1)}}&G_{i-1}\ar[d]^{^{\varphi_{i-1}}}\ar[r]&0\\
0\ar[r]&TH_i\ar[r]&TI_{i}\ar[r]&TH_{i-1}\ar[r]&0,\\}$$
where $G_i=\rm{ker}{\delta}_{i}, {\left(\begin{smallmatrix}{H_i}\\{G_i} \end{smallmatrix}\right)_{\varphi_{i}}} = {\rm{ker}}~{\bf h}(d_{i},~\delta_{i})$ and $\varphi_i$ is canonical induced. In particular, $G_0=G, \ H_0=H$ and $\varphi_0 = \varphi$. It follows that each $\varphi_i$ is surjective and there is an exact sequence $0\rightarrow\rm{ker} \varphi_i\rightarrow \emph{E}_\emph{i} \rightarrow\rm{ker}\varphi_{i-1}\rightarrow0.$
Therefore, we have an exact sequence $E^\bullet=\cdots\rightarrow E_{1}\stackrel{\tau_{1}}\rightarrow E_0\stackrel{\tau_{0}}\rightarrow P_{-1}\rightarrow\cdots$ with each term injective in $\mathcal{B}$ and ${\ker} \tau_0={\ker} \varphi$. For each injective object $J$ in $\mathcal{B}$, one has
$${{\rm Hom}_{(\mathcal{B}\downarrow T)}({\bf h}(0,~J),{\bf h}(I^\bullet,~E^\bullet))}\simeq{\rm Hom}_{\mathcal{A}\times\mathcal{B}}({\bf q}({\bf h}(0,~J),(I^\bullet,~E^\bullet)))={\rm Hom}_{\mathcal{B}}(J,~E^\bullet).$$
By definition ${{\rm Hom}_{(\mathcal{B}\downarrow T)}}({\bf h}(I^{\bullet},~E^{\bullet}),{\bf h}(0,~J))$ is exact. It follows that $E^\bullet$ is totally acyclic. This implies that $\ker\varphi$ is a Gorenstein injective object in $\mathcal{B}$.

Let $C$ be an injective object in $\mathcal{A}$. Applying ${\rm Hom}_{(\mathcal{B}\downarrow T)}({\bf h}(C,~0),~-)$ to the totally acyclic complex of injective objects ${\bf h}(I^\bullet,~E^\bullet)$ in $(\mathcal{B}\downarrow T)$, we obtain that $I^\bullet=\cdots\rightarrow I_{1}\rightarrow I_0\rightarrow I_{-1}\rightarrow\cdots$ is ${\rm Hom}_{\mathcal{B}}(C,~-)$-exact and the proof is similar to the above. So $H=Z_0({I}^\bullet)$ is Gorenstein injective.
\end{proof}

\begin{thm}\label{thm:special preenveloping}
Let $\mathcal{A}$ and $\mathcal{B}$ both have enough projective objects and enough injective objects.
\begin{enumerate}
\item Assume that $\mathcal{X}$  is a subclass of $\mathcal{A}$ with $0\in{\mathcal{X}}$,  $\mathcal{Y}$ is a subclass of $\mathcal{B}$ with $0\in{\mathcal{Y}}$ and $T:\mathcal{A}\rightarrow\mathcal{B}$ is a $\mathcal {X}$-exact functor. If $\mathcal {X}$ and  $\mathcal {Y}$ are special prenveloping, then $\langle{\bf h}(\mathcal {X},~\mathcal {Y})\rangle$ is also special prenveloping in $(\mathcal{B}\downarrow T)$. Moreover, the converse holds when $T(^{\bot}\mathcal{X}\bigcap\mathcal{X})\subseteq{^{\bot}\mathcal{Y}}$ and $\mathcal X, \mathcal Y$ are closed under extensions.
\label{11}\item If $T:\mathcal{A}\rightarrow\mathcal{B}$ is a cocompatible functor, then $\mathcal{GI}_\mathcal{A}$ and $\mathcal{GI}_\mathcal{B}$ are special preenveloping in $\mathcal{A}$ and $\mathcal{B}$, respectively if and only if $\mathcal{GI}_{(\mathcal{B}\downarrow T)}$ is special preenveloping in $(\mathcal{B}\downarrow T)$.
\end{enumerate}
\end{thm}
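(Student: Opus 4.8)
The plan is to prove item~(1) first and then reduce item~(2) to it via \prpref{compatible}.

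For the ``only if'' part of item~(1), I would fix an object $\left(\begin{smallmatrix}A\\B\end{smallmatrix}\right)_{\varphi}$ of $(\mathcal{B}\downarrow T)$ and construct a special $\langle{\bf h}(\mathcal{X},\mathcal{Y})\rangle$-preenvelope of it by adapting the construction in the proof of \prpref{cotorsionpair}, with pushouts in place of pullbacks. First I would take a special $\mathcal{X}$-preenvelope $0\to A\xrightarrow{\alpha}X\to U\to 0$ in $\mathcal{A}$ (so $X\in\mathcal{X}$, $U\in{}^{\bot}\mathcal{X}$); since $T$ is left exact, $T\alpha$ is monic, and I would then enlarge the $\mathcal{B}$-component by pushing out along $T\alpha\circ\varphi\colon B\to TX$ using a special $\mathcal{Y}$-preenvelope of $B$ (and, where needed, of $TX$) so as to arrive at a short exact sequence $0\to\left(\begin{smallmatrix}A\\B\end{smallmatrix}\right)_{\varphi}\to\left(\begin{smallmatrix}X\\Y\end{smallmatrix}\right)_{\psi}\to\left(\begin{smallmatrix}U\\V\end{smallmatrix}\right)\to 0$ with $X\in\mathcal{X}$, $\psi$ epic, $\operatorname{Ker}\psi\in\mathcal{Y}$ and $V\in{}^{\bot}\mathcal{Y}$. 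The left-hand term then lies in $\mathfrak{D}^{\mathcal{X}}_{\mathcal{Y}}\subseteq\langle{\bf h}(\mathcal{X},\mathcal{Y})\rangle$ (the inclusion comes, as in the proof of \prpref{hxyD}, from $0\in\mathcal{X}$ and $0\in\mathcal{Y}$), and the right-hand term lies in $\left(\begin{smallmatrix}{}^{\bot}\mathcal{X}\\{}^{\bot}\mathcal{Y}\end{smallmatrix}\right)={}^{\bot}\langle{\bf h}(\mathcal{X},\mathcal{Y})\rangle$ by \prpref{Xexact}. An alternative route is to pass to summand-closures: since $\mathcal{A},\mathcal{B}$ have enough projectives, \lemref{cotorsionpair} upgrades the special preenveloping classes $\mathcal{X},\mathcal{Y}$ to complete cotorsion pairs $({}^{\bot}\mathcal{X},\mathsf{Smd}(\mathcal{X}))$ and $({}^{\bot}\mathcal{Y},\mathsf{Smd}(\mathcal{Y}))$ whose right halves are extension-closed; $T$ remains $\mathsf{Smd}(\mathcal{X})$-exact, so \prpref{cotorsionpair} yields the complete cotorsion pair $\bigl(\left(\begin{smallmatrix}{}^{\bot}\mathcal{X}\\{}^{\bot}\mathcal{Y}\end{smallmatrix}\right),\langle{\bf h}(\mathsf{Smd}(\mathcal{X}),\mathsf{Smd}(\mathcal{Y}))\rangle\bigr)$ in $(\mathcal{B}\downarrow T)$, after which one transfers the enveloping property down to $\langle{\bf h}(\mathcal{X},\mathcal{Y})\rangle$ by padding each preenvelope with suitable objects of ${\bf h}(\mathcal{X},\mathcal{Y})$. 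The hard part will be precisely here: $T$ being only left exact means that, unlike in \prpref{cotorsionpair}, applying $T$ to the $\mathcal{X}$-preenvelope of $A$ need not give a right-exact sequence, and keeping the cokernel term inside ${}^{\bot}\langle{\bf h}(\mathcal{X},\mathcal{Y})\rangle$ (rather than merely inside ${}^{\bot}\langle{\bf h}(\mathsf{Smd}(\mathcal{X}),\mathsf{Smd}(\mathcal{Y}))\rangle$) requires a careful choice of the auxiliary preenvelopes.

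For the ``if'' part of item~(1), I would mirror the converse in \prpref{cotorsionpair}. Given $A\in\mathcal{A}$, apply the hypothesised special $\langle{\bf h}(\mathcal{X},\mathcal{Y})\rangle$-preenvelope to ${\bf h}(A,0)=\left(\begin{smallmatrix}A\\TA\end{smallmatrix}\right)$; reading off the $\mathcal{A}$-components of the resulting sequence and using \prpref{hxyD} and \prpref{Xexact} produces a special $\mathcal{X}$-preenvelope $0\to A\to X\to U\to 0$ with $U\in{}^{\bot}\mathcal{X}$. Given $B\in\mathcal{B}$, apply the special $\langle{\bf h}(\mathcal{X},\mathcal{Y})\rangle$-preenvelope to $\left(\begin{smallmatrix}0\\B\end{smallmatrix}\right)$, obtaining $0\to\left(\begin{smallmatrix}0\\B\end{smallmatrix}\right)\to\left(\begin{smallmatrix}M\\C\end{smallmatrix}\right)_{\psi}\to\left(\begin{smallmatrix}M\\N\end{smallmatrix}\right)_{\phi}\to 0$ with $M\in{}^{\bot}\mathcal{X}\cap\mathcal{X}$, $N\in{}^{\bot}\mathcal{Y}$, $\psi$ epic and $\operatorname{Ker}\psi\in\mathcal{Y}$; a $3\times3$ diagram together with the hypothesis $T({}^{\bot}\mathcal{X}\cap\mathcal{X})\subseteq{}^{\bot}\mathcal{Y}$ (which forces the relevant middle column to split, exactly as in \prpref{cotorsionpair}) then yields a special $\mathcal{Y}$-preenvelope $0\to B\to\operatorname{Ker}\psi\to\operatorname{Ker}\phi\to 0$ of $B$. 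Extension-closedness of $\mathcal{X},\mathcal{Y}$ is used throughout via \prpref{hxyD} to identify $\langle{\bf h}(\mathcal{X},\mathcal{Y})\rangle$ with $\mathfrak{D}^{\mathcal{X}}_{\mathcal{Y}}$.

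Finally, item~(2) reduces to item~(1) applied with $\mathcal{X}=\mathcal{GI}_{\mathcal{A}}$ and $\mathcal{Y}=\mathcal{GI}_{\mathcal{B}}$: indeed $\mathcal{GI}_{(\mathcal{B}\downarrow T)}=\langle{\bf h}(\mathcal{GI}_{\mathcal{A}},\mathcal{GI}_{\mathcal{B}})\rangle$ by \prpref{compatible}, while $0\in\mathcal{GI}_{\mathcal{A}}$, $0\in\mathcal{GI}_{\mathcal{B}}$ and $\mathcal{GI}_{\mathcal{A}},\mathcal{GI}_{\mathcal{B}}$ are closed under extensions. To invoke item~(1) I must verify its remaining hypotheses from cocompatibility of $T$ (see \dfnref{cocompatible}). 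First, $T$ is $\mathcal{GI}_{\mathcal{A}}$-exact: for $G=\operatorname{Z}_0(I^{\bullet})$ with $I^{\bullet}$ a totally acyclic complex of injectives, condition (C1) makes $TI^{\bullet}$ exact, and left-exactness of $T$ gives $TG=\operatorname{Z}_0(TI^{\bullet})=\operatorname{B}_0(TI^{\bullet})$, so a dimension-shift along $0\to G\to I_0\to\operatorname{Z}_{-1}(I^{\bullet})\to 0$ yields vanishing of the first right derived functor of $T$ on $\mathcal{GI}_{\mathcal{A}}$, which is exactly $\mathcal{GI}_{\mathcal{A}}$-exactness. Second, for the ``if'' direction one needs $T(\mathcal{GI}_{\mathcal{A}}\cap{}^{\bot}\mathcal{GI}_{\mathcal{A}})\subseteq{}^{\bot}\mathcal{GI}_{\mathcal{B}}$: one checks that $\mathcal{GI}_{\mathcal{A}}\cap{}^{\bot}\mathcal{GI}_{\mathcal{A}}$ consists of injectives (if $G$ lies there, the sequence $0\to\operatorname{Z}_1(I^{\bullet})\to I_1\to G\to 0$, with $\operatorname{Z}_1(I^{\bullet})\in\mathcal{GI}_{\mathcal{A}}$, splits, so $G$ is a summand of the injective $I_1$), and condition (C2) forces $TE\in{}^{\bot}\mathcal{GI}_{\mathcal{B}}$ for every injective $E$ of $\mathcal{A}$ — for $G'=\operatorname{Z}_0(J^{\bullet})\in\mathcal{GI}_{\mathcal{B}}$, exactness of $\operatorname{Hom}_{\mathcal{B}}(TE,J^{\bullet})$ makes $\operatorname{Hom}_{\mathcal{B}}(TE,J_0)\to\operatorname{Hom}_{\mathcal{B}}(TE,\operatorname{Z}_{-1}(J^{\bullet}))$ surjective, hence $\operatorname{Ext}^1_{\mathcal{B}}(TE,G')=0$. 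With these verified, the two implications of item~(2) follow from the two implications of item~(1).
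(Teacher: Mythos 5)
Your treatment of the ``if'' part of (1) and of item (2) matches the paper: the converse of (1) is read off from preenvelopes of ${\bf h}(A,0)$ and $\left(\begin{smallmatrix}0\\B\end{smallmatrix}\right)$ via \prpref[Propositions~]{Xexact} and \pgref{prp:hxyD} and the same $3\times3$/splitting argument, and (2) is the same reduction to (1) plus \prpref{compatible}; your derived-functor dimension shift for $\mathcal{GI}_{\mathcal{A}}$-exactness replaces the paper's explicit $3\times3$/Snake-Lemma diagram but is equally valid, and you even justify the assertion (used but not proved in the paper) that $\mathcal{GI}_{\mathcal{A}}\cap{}^{\bot}\mathcal{GI}_{\mathcal{A}}$ consists of injectives.

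The gap is in the ``only if'' part of (1), which is the technical heart of the theorem. Your primary route (direct pushout construction) does not go through as described: taking a special $\mathcal{X}$-preenvelope $0\to A\to X\to U\to0$ and enlarging the $\mathcal{B}$-component, the natural candidates for the middle object (e.g.\ $\left(\begin{smallmatrix}X\\Y'\oplus TX\end{smallmatrix}\right)$ with $Y'$ a special $\mathcal{Y}$-preenvelope of $B$) produce a cokernel whose $\mathcal{B}$-component is an extension of an object of ${}^{\bot}\mathcal{Y}$ by $TX$, and $TX$ need not lie in ${}^{\bot}\mathcal{Y}$ (the hypothesis $T({}^{\bot}\mathcal{X}\cap\mathcal{X})\subseteq{}^{\bot}\mathcal{Y}$ is only assumed for the converse, and here $X\in\mathcal{X}$ only); this is exactly the obstruction the paper's detour through $\mathsf{Smd}$-closures and completeness is designed to avoid. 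Your fallback route is the paper's, but you stop at ``padding each preenvelope with suitable objects of ${\bf h}(\mathcal{X},\mathcal{Y})$'', and you mislocate the remaining difficulty: by \prpref{Xexact} one has ${}^{\bot}\langle{\bf h}(\mathcal{X},\mathcal{Y})\rangle=\left(\begin{smallmatrix}{}^{\bot}\mathcal{X}\\{}^{\bot}\mathcal{Y}\end{smallmatrix}\right)={}^{\bot}\langle{\bf h}(\mathsf{Smd}(\mathcal{X}),\mathsf{Smd}(\mathcal{Y}))\rangle$, so the cokernel class is automatically correct; the genuine work is upgrading the middle term from $\langle{\bf h}(\mathsf{Smd}(\mathcal{X}),\mathsf{Smd}(\mathcal{Y}))\rangle$ to $\langle{\bf h}(\mathcal{X},\mathcal{Y})\rangle$. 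The paper does this by splitting special $\mathcal{X}$- and $\mathcal{Y}$-preenvelopes of the summands $\overline{M}$, $\ker\overline{\varphi}$, $\overline{Y}$ (they split because the cokernels lie in ${}^{\bot}\mathcal{X}={}^{\bot}\mathsf{Smd}(\mathcal{X})$, resp.\ ${}^{\bot}\mathcal{Y}$), adding the complements to both the middle and the end terms, and then exhibiting the corrected middle object as an extension of ${\bf h}(X,0)$ by ${\bf h}(0,Y\oplus Y')$ --- this is where $0\in\mathcal{X}$ and $0\in\mathcal{Y}$ are needed. None of this is carried out in your proposal, so as written the forward implication of (1), and hence the forward implication of (2) which depends on it, is not yet proved.
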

\begin{proof}
At first, we claim that $T$ is $\mathsf{Smd}(\mathcal {X})$-exact. For each exact sequence $0\rightarrow X\rightarrow A_1\rightarrow A_0\rightarrow0$ in $\mathcal{A}$ with ${X}\in\mathsf{Smd}(\mathcal{X})$, there exists $M$ such that $X\oplus M\in \mathcal{X}$. Then there is an induced exact sequence
$0\rightarrow X\oplus M\rightarrow A_1\oplus M\rightarrow A_0\rightarrow0$. Consequently, we obtain the following commutative diagram
$$\xymatrix{
0\ar[r]&TX\oplus TM\ar[r]\ar[d]&TA_1\oplus TM\ar[r]\ar[d]&TA_0\ar[r]\ar@{=}[d]&0\\
    0\ar[r]&TX\ar[r]& TA_1\ar[r]&TA_0 \ar[r]&0,\\
    }$$
where the top row is exact since $T$ is $\mathcal{X}$-exact. It follows that the bottom row is exact.

Since $(^\bot\mathsf{Smd}(\mathcal{X}),~\mathsf{Smd}(\mathcal{X}))$ and $(^\bot\mathsf{Smd}(\mathcal {Y}),~ \mathsf{Smd}(\mathcal {Y}))$ are complete cotorsion pairs by \lemref{cotorsionpair}, the pair $((\begin{smallmatrix}^{\bot}\mathsf{Smd}(\mathcal{X})\\^{\bot}\mathsf{Smd}(\mathcal{Y})\end{smallmatrix}), ~\langle{\bf p}(\mathsf{Smd}(\mathcal{X}),\mathsf{Smd}(\mathcal {Y}))\rangle)$ is a complete cotorsion pair in $(\mathcal{B}\downarrow T)$ by \prpref{cotorsionpair}. Thus for each object $\left(\begin{smallmatrix}A\\B\end{smallmatrix}\right)\in(\mathcal{B}\downarrow T)$, there is an exact sequence $0\rightarrow\left(\begin{smallmatrix}A\\B\end{smallmatrix}\right){\rightarrow}
(\begin{smallmatrix}\overline{M}\\ \overline{Y}\end{smallmatrix})_{\overline{\varphi}}\rightarrow
(\begin{smallmatrix}\overline{C}\\ \overline{D}\end{smallmatrix})_{\overline{\psi}}\rightarrow0$ with $(\begin{smallmatrix}\overline{M}\\ \overline{Y}\end{smallmatrix})_{\overline{\varphi}}\in\langle{\bf h}(\mathsf{Smd}(\mathcal {X}),~\mathsf{Smd}(\mathcal {Y}))\rangle$ and $(\begin{smallmatrix}\overline{C}\\ \overline{D}\end{smallmatrix})_{\overline{\psi}}\in(\begin{smallmatrix}^{\bot}\mathsf{Smd}(\mathcal {X})\\^{\bot}\mathsf{Smd}(\mathcal {Y})\end{smallmatrix})$.

Since $\mathcal{X}$ is special preenveloping, we have an exact sequence $0\rightarrow \overline{M}\rightarrow X\rightarrow K\rightarrow0$ in $\mathcal{A}$ with $X\in\mathcal{X}$ and $K\in^\bot\mathcal{X}$. It follows from $^\bot\mathcal{X}=^\bot\mathsf{Smd}(\mathcal{X})$ that $X\cong\overline{M}\oplus K$.
Similarly, one can show that there exists an exact sequence $0\rightarrow \textrm{ker}\overline{\varphi}\rightarrow Y\rightarrow U\rightarrow0$ in $\mathcal{B}$ with $ U\oplus{\textrm{ker}\overline{\varphi}}\cong Y$ and $U\in^\bot\mathcal{Y}$ by noting that ${\textrm{ker}\overline{\varphi}}\in{\mathsf{Smd}(\mathcal {Y})}$ and $\mathcal{Y}$ is special preenveloping. For $TK\in\mathcal{B}$, there is an exact sequence $0\rightarrow \overline{Y}{\rightarrow} N\rightarrow\stackrel{i} TK\rightarrow0$ in $\mathcal{B}$ with $N\in^\bot\mathsf{Smd}(\mathcal{Y})=^\bot\mathcal{Y}$ and $\overline{Y}\in\mathsf{Smd}(\mathcal{Y})$. Moreover, there is an exact sequence $0\rightarrow \overline{Y}\rightarrow Y'\rightarrow L\rightarrow0$ in $\mathcal{B}$ with $Y'\in\mathcal{Y}$ and $L\in^\bot\mathcal{Y}$, and so $Y'\cong\overline{Y}\oplus L$. Therefore, we have an exact sequence
$$0\rightarrow\left(\hspace{-0.15cm}\begin{array}{c} A\\B\\\end{array}\hspace{-0.15cm}\right){\rightarrow} \left(\hspace{-0.15cm}\begin{array}{c} E\\H\\\end{array}\hspace{-0.15cm}\right)_{\varphi}\rightarrow
\left(\hspace{-0.15cm}\begin{array}{c} C\\D\\\end{array}\hspace{-0.15cm}\right)_{\psi}\rightarrow0$$
in $(\mathcal{B}\downarrow T)$ with $E\cong\overline{M}\oplus K, C\cong\overline{C}\oplus K, H\cong\overline{Y}\oplus U\oplus N \oplus L, D\cong \overline{D}\oplus U\oplus N\oplus L$, $\psi=\left(\begin{smallmatrix}\overline{\psi}&0&0&0\\0&0&i&0\\ \end{smallmatrix}\right)$ and $\varphi=\left(\begin{smallmatrix}\overline{\varphi}&0&0&0\\0&0&i&0\\\end{smallmatrix}\right)$. Clearly, $\left(\begin{smallmatrix}C\\D\end{smallmatrix}\right)_{\psi}\in(\begin{smallmatrix}^{\bot}\mathcal {X}\\^{\bot}\mathcal {Y}\end{smallmatrix})$. Notice that we have an exact sequence
$0\rightarrow(\begin{smallmatrix}0\\{Y\bigoplus Y'}\end{smallmatrix}){\rightarrow} \left(\begin{smallmatrix}X\\{H}\end{smallmatrix}\right)_{\varphi}{\rightarrow}\left(\begin{smallmatrix} X\\ TX\end{smallmatrix}\right)\rightarrow0$ in $(\mathcal{B}\downarrow T)$. Thus $\left(\begin{smallmatrix}M\\Y\end{smallmatrix}\right)_{\varphi}\in\langle{\bf h}(\mathcal{X},\mathcal{Y})\rangle$. So we obtain that $\langle{\bf h}(\mathcal{X},~\mathcal{Y})\rangle$ is special preenveloping.

Conversely, assume that $\langle{\bf h}(\mathcal {X},~\mathcal {Y})\rangle$ is special preenveloping.
For any $\left(\begin{smallmatrix}A\\B\end{smallmatrix}\right)\in(\mathcal{B}\downarrow T)$, there is an exact sequence $\xi: 0\rightarrow\left(\begin{smallmatrix}A\\B\end{smallmatrix}\right){\rightarrow} \left(\begin{smallmatrix}M\\{Y}\end{smallmatrix}\right)_\varphi\rightarrow\left(\begin{smallmatrix}C\\D\end{smallmatrix}\right)
_\psi\rightarrow0$ in $(\mathcal{B}\downarrow T)$ with
$\left(\begin{smallmatrix}M\\Y\end{smallmatrix}\right)\in\langle{\bf h}(\mathcal {X},~\mathcal {Y})\rangle$ and $\left(\begin{smallmatrix}C\\D\end{smallmatrix}\right)\in^\bot\langle{\bf h}(\mathcal {X},~\mathcal {Y})\rangle$. By \prpref{Xexact}, $^\bot\langle{\bf h}(\mathcal {X},~\mathcal {Y})\rangle=(\begin{smallmatrix}^{\bot}\mathcal {X}\\^{\bot}\mathcal {Y}\end{smallmatrix})$. So the exact sequence $0\rightarrow A\rightarrow M\rightarrow C\rightarrow0$ implies that $\mathcal{X}$ is special preenveloping.  Moreover, let $A=0$ for the sequence $\xi$, then we have an exact commutative diagram
$$\xymatrix{
&0\ar[d]&0\ar[d]&0\ar[d]&\\
0\ar[r]&B\ar[r]\ar@{=}[d]&\textrm{ker}\varphi\ar[r]\ar[d]&\textrm{ker}\psi\ar[r]\ar[d]&0\\
0\ar[r]&B\ar[r]^{\simeq}\ar[d]&Y\ar[d]^\varphi\ar[r]&D\ar[d]^{\psi}\ar[r]&0\\
0\ar[r]&0\ar[r]&TM\ar[r]\ar[d]&TC\ar[r]\ar[d]&0\\
&&0&0&\\}$$
Note that $M\in\mathcal{X}\cap\,^{\bot}\mathcal{X}, \textrm{ker}\varphi\in\mathcal{Y}$, so the middle column is split. This implies that the right column is split. If follows that  $\textrm{ker}\phi\in\,^{\bot}\mathcal{Y}$ since $D\in\,^{\bot}\mathcal{Y}$. Thus the exact sequence $0\rightarrow B\rightarrow\textrm{ker}\varphi\rightarrow \textrm{ker}\phi\rightarrow0$ yields the special $\mathcal{Y}$-preenvelop of $B$.

(2)Assume that $T:\mathcal{A}\rightarrow\mathcal{B}$ is a cocompatible functor. Note that $^{\bot}\mathcal{GI}_{\mathcal{A}}\cap{\mathcal{GI}_{\mathcal{A}}}$ is the class of injective objects in $\mathcal{A}$. It follows from  (C2) in \dfnref{cocompatible} that $T(^{\bot}\mathcal{GI}_{\mathcal{A}}\cap {\mathcal{GI}_{\mathcal{A}}})\subseteq{{^{\bot}\mathcal{GI}_{\mathcal{B}}}}$. By (1) and \prpref{compatible}, it suffices to show that $T$ is $\mathcal{GI}_{\mathcal{A}}$-exact.

In fact, for each exact sequence $0\ra X \ra A\ra B\ra 0$  in $\mathcal{A}$ with $X\in{\mathcal{GI}_{\mathcal{A}}}$, there is an exact sequence $0\ra X\ra E_0\ra K_{-1}\ra 0$ in $\mathcal{A}$ such that $K_{-1}$ is a Gorenstein object and $E_0$ is a injective object. We choose an exact sequence $0\ra B\ra J_0\ra C_{-1}\ra 0$ in $\mathcal{A}$ with $J_0$ an injective object. Thus we have the following exact commutative diagram:

$$\xymatrix{&0\ar[d]&0\ar[d]&0\ar[d]&\\
0\ar[r]&X\ar[r]\ar[d]&A\ar[r]\ar[d]&B\ar[r]\ar[d]&0\\
0\ar[r]&E_0\ar[r]\ar[d]&W_{0}\ar[r]\ar[d]&J_0\ar[r]\ar[d]&0\\
0\ar[r]&K_{-1}\ar[r]\ar[d]&Z_{-1}\ar[r]\ar[d]&C_{-1} \ar[r]\ar[d]&0\\
&0&0&0&}$$
where the second row is split. Note that $0\ra TX\ra TE_0\ra TK_{-1}\ra 0$ is an exact sequence in $\mathcal{B}$ by (C1) in \dfnref{cocompatible}. Applying the functor $T$ to the above diagram, we have the following commatative diagram with exact rows and columns:

$$\xymatrix{&0\ar[d]&0\ar[d]&0\ar[d]\\
0\ar[r]&TX\ar[r]\ar[d]&TA\ar[r]\ar[d]&TB\ar[d]\\
0\ar[r]&TE_0\ar[r]\ar[d]&TW_{0}\ar[r]\ar[d]&TJ_0\ar[d]\\
0\ar[r]&TK_{-1}\ar[r]\ar[d]&TZ_{-1}\ar[r]&TC_{-1}\\
&0\\}$$
By the Snake Lemma, we have that $0\rightarrow TX\rightarrow TA\rightarrow TB\rightarrow 0$ is an exact sequence in $\mathcal{B}$.
\end{proof}
Suppose that $\Lambda=\left(\begin{smallmatrix}S & 0 \\M & R \\ \end{smallmatrix}\right)$ is a triangular matrix ring and $_{R}M_{S}$ an $R$-$S$-bimodule. Define $T\cong{\rm Hom}_R(M,~-):$ $R$-$\rm{Mod}\rightarrow$$S$-$\rm{Mod}$, then we know that $\Lambda$-$\rm{Mod}$ is equivalent to the comma category ($S$-$\rm{Mod}$$\downarrow T)$. By Proposition \ref{00} and Theorem \ref{thm:special preenveloping}(2), we have the following result.
\begin{cor}\label{corrr}
Let $\Lambda=\left(\begin{smallmatrix}S & 0 \\M & R \\ \end{smallmatrix}\right)$ be a triangular matrix ring. Assume that ${\rm pd}_{R}M$ and ${\rm fd}M_{S}$ are finite, then $\mathcal{GI}({\Lambda})$ is special preenveloping in $\Lambda$-$\rm{Mod}$ if and only if  $\mathcal{GI}({R})$ and $\mathcal{GI}({S})$ are special preenveloping in $R$-$\rm{Mod}$ and $S$-$\rm{Mod}$, respectively.
\end{cor}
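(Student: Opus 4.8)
The plan is to read this off from \thmref{special preenveloping}(2) after checking that the functor $T=\Hom{R}{M}{-}\colon R\text{-Mod}\to S\text{-Mod}$ is cocompatible under the stated finiteness hypotheses.

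First I would invoke the equivalence $\Lambda\text{-Mod}\simeq(S\text{-Mod}\downarrow T)$ recalled just before the statement, under which the role of $\mathcal{A}$ is played by $R\text{-Mod}$, that of $\mathcal{B}$ by $S\text{-Mod}$, and the classes $\mathcal{GI}(\Lambda)$, $\mathcal{GI}(R)$, $\mathcal{GI}(S)$ correspond to $\mathcal{GI}_{(\mathcal{B}\downarrow T)}$, $\mathcal{GI}_{\mathcal{A}}$, $\mathcal{GI}_{\mathcal{B}}$. Both module categories have enough projective and enough injective objects, and $T=\Hom{R}{M}{-}$ is left exact, so the standing hypotheses of \thmref{special preenveloping} are satisfied.

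Next I would verify the two conditions of \dfnref{cocompatible}. Condition (C1), that $T$ preserves exactness of exact complexes of injective $R$-modules, holds because ${\rm pd}_{R}M$ is finite, by Proposition~\ref{00}(1). Condition (C2), that $\Hom{S}{TI}{J^\bullet}$ is exact for every totally acyclic complex $J^\bullet$ of injective $S$-modules and every injective $R$-module $I$, holds because ${\rm fd}\,M_{S}$ is finite, by Proposition~\ref{00}(2). Therefore $T$ is cocompatible.

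Granting this, \thmref{special preenveloping}(2) gives precisely that $\mathcal{GI}_{\mathcal{A}}$ and $\mathcal{GI}_{\mathcal{B}}$ are special preenveloping in $R\text{-Mod}$ and $S\text{-Mod}$ if and only if $\mathcal{GI}_{(\mathcal{B}\downarrow T)}$ is special preenveloping in $(S\text{-Mod}\downarrow T)$, and transporting this back along the equivalence of categories above yields the corollary. I do not expect any genuine obstacle here: the argument is essentially bookkeeping, and the only point demanding care is keeping the two module structures on $M$ apart, since $T$ goes from $R$-modules to $S$-modules, so that it is the projective dimension of $M$ over $R$ which governs (C1) and the flat dimension of $M$ over $S$ which governs (C2), not the other way around.
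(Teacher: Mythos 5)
Your proposal is correct and follows exactly the paper's intended argument: the paper derives the corollary by identifying $\Lambda$-Mod with $(S\text{-Mod}\downarrow T)$ for $T=\mathrm{Hom}_R(M,-)$, using Proposition~\ref{00} to verify (C1) from ${\rm pd}_RM<\infty$ and (C2) from ${\rm fd}\,M_S<\infty$, and then applying Theorem~\ref{thm:special preenveloping}(2). Your remark about keeping the two module structures on $M$ straight is precisely the only delicate point, and you handle it correctly.
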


Next, we provide an instance to illustrate the above result. We assume that the composition of arrows $\alpha$ and $\beta$ with $\mathfrak{t}(\alpha)=\mathfrak{s}(\beta)$ in this example, where $\mathfrak{s}$ and $\mathfrak{t}$ are two functions defined on quiver $\Q = (\Q_0, \Q_1, \mathfrak{s}, \mathfrak{t})$ sending any arrow to its starting point and ending point, respectively.

\begin{exm} \rm \label{exp}
Let $k$ be a field and $S=k\Q/\I$ a finite-dimensional $k$-algebra given by $\mathcal{Q}=$
\[
\xymatrix @R=0.3cm@C=0.5cm{
  & 3\ar[dr]^{a_3} && 6 \ar[dl]_{a_5}&\\
    2\ar[ur]^{a_2} && 4 \ar[dl]^{a_4}\ar[rd]_{a_6}&\\
  & 1\ar[ul]^{a_1} && 5 &
}\]
and $\I= \langle a_1a_2, a_2a_3, a_3a_4, a_4a_1, a_5a_6\rangle$.
Let $R=k$ be the $k$-module with $k$-dimension one and $\Lambda=\left(\begin{smallmatrix} k & 0 \\ {_kS(5)_S} & S \\ \end{smallmatrix}\right)$ ($_kS(5)_S = S(5)_S$ is the simple module in $\modcat\dashed S$ corresponding to the vertex $5 \in \Q_0$).
Then $\Lambda$ is a finite-dimensional $k$-algebra, and we obtain that $\Lambda = k\Q'/\I'$ by
\[\dim_k\varepsilon_i' (\mathrm{rad}\Lambda/\mathrm{rad}^2\Lambda) \varepsilon_j' = \sharp\{a:j\to i \mid a\in \Q'_1\}\]
\begin{center}
  ($\varepsilon_i$ is the primitive idempotent of $\Lambda$ corresponding to $i\in\Q_0$,

  and $\sharp N$ the number of all elements of the set $N$),
\end{center}
where
\[
\xymatrix @R=0.3cm@C=0.5cm{
  & 3\ar[dr]^{a_3} && 6 \ar[dl]_{a_5}&\\
    2\ar[ur]^{a_2} && 4 \ar[dl]^{a_4}\ar[rd]_{a_6}&\\
  & 1\ar[ul]^{a_1} && 5 \ar[rd]_{a_7} & \\
  & & & & 7
}\]
and $\I' = \langle a_1a_2, a_2a_3, a_3a_4, a_4a_1, a_5a_6, a_6a_7 \rangle$, see \cite[Chap II, Definition 3.1 and Theorem 3.7]{ASS2006}.
Then the following statements hold:
\begin{itemize}
  \item  $T:=\Hom{k}{S(5)}{-}$ is cocompatible by Proposition \ref{00} because $S(5)$ is a left projective $k$-module and right flat $S$-module, that is, ${\rm{pd}}_{k}S(5)=0$ and ${\rm{fd}}S(5)_{S}=0$.
  \item The Gorenstein injective left modules over $S$ and $\Lambda$ can be calculated by the dual of \cite[Theorem 4.1]{MR3869172}
    (or the dual of \cite[Theorem 2.5]{K2014}) because $S$ and $\Lambda$ are gentle algebras, to be more precise,
    \begin{center}
      $\mathcal{GI}({S}) = \left\{{_SS(1)}, {_SS(2)},{_SS(3)},{_S\big(\begin{smallmatrix} 4\\ 5 \end{smallmatrix}\big)}\right\}
      \cup \{{_SE(i)} \mid i\in \Q_0\}$;

      $\mathcal{GI}({\Lambda}) = \left\{{_{\Lambda}S(1)}, {_{\Lambda}S(2)},{_{\Lambda}S(3)},{_{\Lambda}\big(\begin{smallmatrix} 4\\ 5 \end{smallmatrix}\big)}\right\}
      \cup \{{_{\Lambda}E(i)} \mid i\in \Q_0'\}$.
    \end{center}
  \item $\mathcal{GI}({R}) \ ( = \mathcal{GI}({k}) )\ =k$.
\end{itemize}
One can check that $\mathcal{GI}({R})$ and $\mathcal{GI}({S})$ are special preenveloping in $R\dashed\modcat$ and $S\dashed\modcat$, respectively.
Then $\mathcal{GI}({\Lambda})$ is special preenveloping in $\Lambda\dashed\rm{mod}$ by Corallary \ref{corrr}, and vice verse.
In fact, we can show the above conclusion by the Auslander-Reiten quiver $\Gamma(\Lambda\dashed\modcat)$ of $\Lambda\dashed\modcat$ which is shown in Figure \ref{AR-quiver}
(The modules enclosed with {\color{blue}solid lines} and {\color{orange}dotted lines} are
indecomposable injective and non-injective Gorenstein-injective modules, respectively).
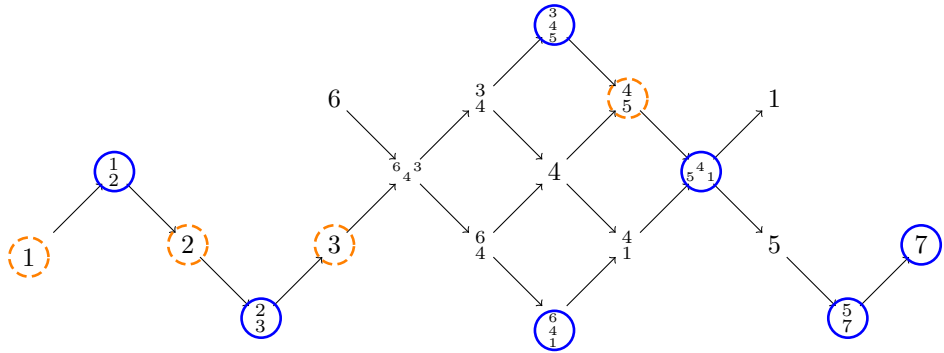
\begin{figure}[htbp]
\centering
\begin{tikzpicture} [scale=0.65]
\draw [->] (0.5,0.5) -- (1.5,1.5);
\draw [->] (2,1.5) -- (3,0.5);
\draw [->] (3.5,0) -- (4.5,-1);
\draw [->] (5,-1) -- (6,0);
\draw [->] (6.5,0.5) -- (7.5,1.5);
\draw [->] (6.5,3) -- (7.5,2);
\draw [->] (8,2) -- (9,3);
\draw [->] (8,1.5) -- (9,0.5);
\draw [->] (9.5,0) -- (10.5,-1);
\draw [->] (9.5,3.5) -- (10.5,4.5);
\draw [->] (9.5,3) -- (10.5,2);
\draw [->] (9.5,0.5) -- (10.5,1.5);
\draw [->] (11,4.5) -- (12,3.5);
\draw [->] (11,2) -- (12,3);
\draw [->] (11,1.5) -- (12,0.5);
\draw [->] (11,-1) -- (12,0);
\draw [->] (12.5,3) -- (13.5,2);
\draw [->] (12.5,0.5) -- (13.5,1.5);
\draw [->] (14,2) -- (15,3);
\draw [->] (14,1.5) -- (15,0.5);
\draw [->] (15.5,0) -- (16.5,-1);
\draw [->] (17,-1) -- (18,0);
\draw ( 0   , 0   ) node{$1$};                  \draw[orange][line width=1pt] (0   , 0   ) circle (0.4) [dash pattern=on 4pt off 2pt];
\draw ( 1.75, 1.75) node{${^1_2}$};             \draw[  blue][line width=1pt] (1.75, 1.75) circle (0.4);
\draw ( 3.25, 0.25) node{$2$};                  \draw[orange][line width=1pt] (3.25, 0.25) circle (0.4) [dash pattern=on 4pt off 2pt];
\draw ( 4.75,-1.25) node{${^2_3}$};             \draw[  blue][line width=1pt] (4.75,-1.25) circle (0.4);
\draw ( 6.25, 3.25) node{$6$};
\draw ( 6.25, 0.25) node{$3$};                  \draw[orange][line width=1pt] (6.25, 0.25) circle (0.4) [dash pattern=on 4pt off 2pt];
\draw ( 7.75, 1.75) node{\tiny ${^6}{_4}{^3}$};
\draw ( 9.25, 3.25) node{${^3_4}$};
\draw ( 9.25, 0.25) node{${^6_4}$};
\draw (10.75, 4.75) node{${^{_3}_{^4_5}}$};     \draw[  blue][line width=1pt] (10.75, 4.75) circle (0.4);
\draw (10.75, 1.75) node{$4$};
\draw (10.75,-1.5 ) node{${^{_6}_{^4_1}}$};     \draw[  blue][line width=1pt] (10.75,-1.5 ) circle (0.4);
\draw (12.25, 3.25) node{${^4_5}$};             \draw[orange][line width=1pt] (12.25, 3.25) circle (0.4) [dash pattern=on 4pt off 2pt];
\draw (12.25, 0.25) node{${^4_1}$};
\draw (13.75, 1.75) node{\tiny ${_5}{^4}{_1}$}; \draw[  blue][line width=1pt] (13.75, 1.75) circle (0.4);
\draw (15.25, 3.25) node{$1$};
\draw (15.25, 0.25) node{$5$};
\draw (16.75,-1.25) node{${^5_7}$};             \draw[  blue][line width=1pt] (16.75,-1.25) circle (0.4);
\draw (18.25, 0.25) node{${7}$};                \draw[  blue][line width=1pt] (18.25, 0.25) circle (0.4);
\end{tikzpicture}
\caption{The Auslander-Reiten quiver of $\Lambda$ in Example \ref{exp}}
\label{AR-quiver}
\end{figure}
Notice that the composition of two arrows $\alpha$ and $\beta$ we defined is $\alpha\beta$ ($\mathfrak{t}(\alpha)=\mathfrak{s}(\beta)$),
then the representations of indecomposable left $\Lambda$-modules and the Auslander-Reiten quiver $\Gamma(\Lambda\dashed\modcat)$ of $\Lambda\dashed\modcat$ are opposite to that of right $\Lambda$-modules and $\modcat\dashed\Lambda$, respectively.
\end{exm}


\section*{Acknowledgments}
We would like to thank Yu-Zhe Liu for his helpful discussions.

\def\romsup#1{{\edef\next{\the\font}$^{\next#1}$}}
\providecommand{\bysame}{\leavevmode\hbox to3em{\hrulefill}\thinspace}
\providecommand{\MR}{\relax\ifhmode\unskip\space\fi MR }
\providecommand{\MRhref}[2]{%
  \href{http://www.ams.org/mathscinet-getitem?mr=#1}{#2}
}
\providecommand{\href}[2]{#2}


\begin{thebibliography}{1}

\bibitem{ASS2006}
Ibrahim Assem, Daniel Simson and Andrzej Skowro\'{n}ski,
\emph{Elements of the Representation Theory of Associative Algebras, Volume 1 Techniques of Representation Theory}.
\newblock Cambridge University Press, The Edinburgh Building, Cambridge, UK
$($Published in the United States of America by Cambridge University Press$)$, 2006.

\bibitem{MR4430943}
Xiao-Wu Chen and Jue Le, \emph{Recollements, comma categories and morphic
  enhancements}, Proc. Roy. Soc. Edinburgh Sect. A \textbf{152} (2022), no.~3,
  567--591. \MR{4430943}

\bibitem{MR3869172}
Xiao-Wu Chen, Dawei Shen and Guodong Zhou, \emph{ The Gorenstein-projective modules over a monomial algebra},
Proc. Roy. Soc. Edinburgh Sect. A \textbf{148} (2022), no.~1,
  1115--1134. \MR{3869172}

\bibitem{MR389981}
Robert~M. Fossum, Phillip~A. Griffith and Idun Reiten, \emph{Trivial
  extensions of abelian categories}, Lecture Notes in Mathematics, Vol. 456,
  Springer-Verlag, Berlin-New York, 1975, Homological algebra of trivial
  extensions of abelian categories with applications to ring theory.
  \MR{389981}

\bibitem{MR1239447}
P.~Gabriel and A.~V. Ro\u{\i}ter, \emph{Representations of finite-dimensional
  algebras}, Algebra, {VIII}, Encyclopaedia Math. Sci., vol.~73, Springer,
  Berlin, 1992, With a chapter by B. Keller, pp.~1--177. \MR{1239447}

\bibitem{MR2985654}
R\"{u}diger G\"{o}bel and Jan Trlifaj, \emph{Approximations and endomorphism
  algebras of modules. {V}olume 2}, extended ed., De Gruyter Expositions in
  Mathematics, vol.~41, Walter de Gruyter GmbH \& Co. KG, Berlin, 2012,
  Predictions. \MR{2985654}

\bibitem{MR2355778}
Mark Hovey, \emph{Cotorsion pairs and model categories}, Interactions between
  homotopy theory and algebra, Contemp. Math., vol. 436, Amer. Math. Soc.,
  Providence, RI, 2007, pp.~277--296. \MR{2355778}

\bibitem{MR4412784}
Jiangsheng Hu and Haiyan Zhu, \emph{Special precovering classes in comma
  categories}, Sci. China Math. \textbf{65} (2022), no.~5, 933--950.
  \MR{4412784}

\bibitem{K2014}
Martin Klack,
\emph{Singularity categories of gentle algebras},
Bulletin of the London Mathematical Society.
\textbf{47} (2015), no.~1, 65--74.

\bibitem{MR709023}
Nikolaos Marmaridis, \emph{Comma categories in representation theory}, Comm.
  Algebra \textbf{11} (1983), no.~17, 1919--1943. \MR{709023}

\bibitem{MR565595}
Luigi Salce, \emph{Cotorsion theories for abelian groups}, Symposia
  {M}athematica, {V}ol. {XXIII} ({C}onf. {A}belian {G}roups and their
  {R}elationship to the {T}heory of {M}odules, {INDAM}, {R}ome, 1977), Academic
  Press, London-New York, 1979, pp.~11--32. \MR{565595}

\end{thebibliography}
\end{document}